
\documentclass[draft]{amsart}


\usepackage[draft=false,pagebackref]{hyperref}
\usepackage{amssymb,esint,mathtools}

\newtheorem{thm}[equation]{Theorem}
\newtheorem{lem}[equation]{Lemma}
\newtheorem{cor}[equation]{Corollary}
\theoremstyle{definition}
\newtheorem{defn}[equation]{Definition}

\numberwithin{equation}{section}

\newcommand\abs[2][empty]{\csname#1\endcsname \lvert{#2}\csname#1\endcsname\rvert}
\newcommand\doublebar[2][empty]{\csname#1\endcsname \lVert{#2}\csname#1\endcsname\rVert}

\newcommand\mat[1]{\boldsymbol{#1}}
\newcommand\arr[1]{\boldsymbol{\dot{#1}}}

\newcommand\dist{\mathop{\mathrm{dist}}\nolimits}
\newcommand\Div{\mathop{\mathrm{div}}\nolimits}
\newcommand\Tr{\mathop{\smash{\boldsymbol{\rlap{$\arr{\phantom{T}}$}\mathrm{Tr}}}\vphantom{T}}\nolimits}
\newcommand\Trace{\mathop{\mathrm{Tr}}\nolimits}
\newcommand\M{\mathop{\smash{\arr{\mathrm{M}}}\vphantom{M}}\nolimits}
\newcommand\MM{\mathop{\smash{{\mathrm{M}}}\vphantom{M}}\nolimits}
\newcommand\supp{\mathop{\mathrm{supp}}\nolimits}
\newcommand\re{\mathop{\mathrm{Re}}\nolimits}
 \let\R\RR

 \let\N\NN
\newcommand\1{\mathbf{1}}
\newcommand\D{\mathcal{D}}
\newcommand\s{\mathcal{S}}

\newcommand\pureH{\parallel}

\newcommand\dmn{{n+1}}
\newcommand\pdmn{{(n+1)}}
\newcommand\dmnMinusOne{n}

\usepackage{xcolor}
\definecolor{mygreen}{rgb}{0,0.55,0}
\definecolor{darkred}{rgb}{0.7,0,0}
\definecolor{historyred}{rgb}{0.4,0,0}
\definecolor{purple}{rgb}{0.55,0,0}

\makeatletter\def\HyPsd@CatcodeWarning#1{}\makeatother

\begin{document}

\title[Nontangential estimates and the Neumann problem]{Nontangential estimates on layer potentials and the Neumann problem for higher order elliptic equations}

\author{Ariel Barton}
\address{Ariel Barton, Department of Mathematical Sciences,
			309 SCEN,
			University of Ar\-kan\-sas,
			Fayetteville, AR 72701}
\email{aeb019@uark.edu}

\author{Steve Hofmann}
\address{Steve Hofmann, 202 Math Sciences Bldg., University of Missouri, Columbia, MO 65211}
\email{hofmanns@missouri.edu}
\thanks{Steve Hofmann is partially supported by the NSF grant DMS-1361701.}

\author{Svitlana Mayboroda}
\address{Svitlana Mayboroda, Department of Mathematics, University of Minnesota, Minneapolis, Minnesota 55455}
\email{svitlana@math.umn.edu}
\thanks{Svitlana Mayboroda is partially supported by the  NSF CAREER Award DMS 1056004,  the NSF INSPIRE Award DMS 1344235, the Simons Fellowship,  and the Simons Collaborations in Mathematics and the Physical Sciences.}

\begin{abstract}
We solve the Neumann problem, with nontangential estimates, for higher order divergence form elliptic operators with variable $t$-independent coefficients. Our results are accompanied by nontangential estimates on higher order layer potentials.
\end{abstract}

\keywords{Elliptic equation, higher-order differential equation, Neumann problem, nontangential maximal estimates, layer potentials}

\subjclass[2010]{Primary
35J40, 
Secondary
31B10, 
35C15
}

\maketitle 

\tableofcontents

\section{Introduction}



Consider the higher order elliptic differential operator $L$ given by
\begin{equation}\label{eqn:divergence}
Lu = (-1)^m \sum_{\abs{\alpha}=\abs{\beta}=m} \partial^\alpha (A_{\alpha\beta} \partial^\beta u)\end{equation}
for $m\geq 1$ a positive integer. The study of such operators, for $2m\geq 4$, is still fairly new. However, some results are known in the case of constant coefficients; see, for example, \cite{DahKPV97,Ver05,She07B,MitM13A} for some results related to those of the present paper. 


In this paper we will consider coefficients $\mat A$ that are variable, but are bounded, elliptic, and $t$-independent in the sense that
\begin{equation}\label{eqn:t-independent}\mat A(x,t)=\mat A(x,s)=\mat A(x) \quad\text{for all $x\in\R^n$ and all $s$, $t\in\R$}.\end{equation}
Such $t$-independent coefficients have been studied extensively in the second order case. In particular, layer potentials have been used extensively in this case; see \cite{HofKMP15B,HofMayMou15,HofMitMor15} for some recent examples. In \cite{BarHM17pA,Bar17} we generalized layer potentials to the higher order case; we will continue to use them in the present paper.

The main result of the present paper (see Theorem~\ref{thm:Neumann:2:N} below) is existence of solutions to the Neumann problem
\begin{equation}
\label{eqn:neumann:regular:N}
\left\{\begin{aligned}
Lw&=0 \text{ in }\R^\dmn_+
,\\
\M_{\mat A}^+ w &= \arr g,
\\
\doublebar{\widetilde N_+(\nabla^m w)}_{L^2(\R^n)}
&\leq C\doublebar{\arr g}_{L^2(\R^\dmnMinusOne)}
\end{aligned}\right.\end{equation}
and the rough Neumann problem (or subregularity problem)
\begin{equation}
\label{eqn:neumann:rough:N}
\left\{\begin{aligned}
Lv&=0 \text{ in }\R^\dmn_+
,\\
\M_{\mat A}^+ v &= \arr h,
\\
\doublebar{\widetilde N_+(\nabla^{m-1} v)}_{L^2(\R^n)}
&\leq C\doublebar{\arr h}_{\dot W_{-1}^2(\R^\dmnMinusOne)}
\end{aligned}\right.\end{equation}
where $\M_{\mat A}^+$ denotes the Neumann boundary value operator (given in the second order case by $-\vec e_\dmn\cdot \mat A\nabla$, and by formula~\eqref{eqn:Neumann} below or by \cite[formula~(2.16)]{Bar17pB} in the general case), and where $\widetilde N_+$ denotes the modified nontangential maximal operator; this is the natural sharp estimate on solutions to boundary value problems. 
This work builds on our earlier results \cite{BarHM17,BarHM17pA,BarHM18}, in which we established well posedness in terms of the Lusin area integral. We will solve the problems (\ref{eqn:neumann:regular:N}--\ref{eqn:neumann:rough:N}) by establishing nontangential bounds on the double layer potential; we will in the process establish nontangential bounds on the single layer potential.

\subsection{Solutions to the Neumann problem}


We begin by reviewing the history of the Neumann problem with $L^2$ or $\dot W^2_{-1}$ boundary data. In the case of harmonic functions, solutions to the problem
\begin{equation*}\Delta u=0 \text{ in }\Omega, \quad \nu\cdot\nabla u=g \text{ on }\partial\Omega, \quad \doublebar{N_\Omega(\nabla u)}_{L^2(\partial\Omega)} \leq C\doublebar{g}_{L^2(\partial\Omega)}\end{equation*}
for an arbitrary bounded 
$C^1$ domain~$\Omega$ were constructed using the method of layer potentials in \cite{FabJR78}.
Here $\nu$ denotes the unit outward normal vector to~$\partial\Omega$, and $N_\Omega$ denotes the standard nontangential maximal function $N_\Omega F(X)= \sup\{\abs{F(Y)}: Y\in\Omega, \> \dist(Y,\partial\Omega)<2\abs{X-Y}\}$. 

By the divergence theorem, if $\nabla u$ is continuous up to the boundary, $\nu\cdot \nabla u=g$ on~$\partial\Omega$, and $\Delta u=\nabla\cdot\nabla u=0$ in~$\Omega$, then
\begin{equation}\label{eqn:neumann:harmonic}
\int_{\partial\Omega}\varphi\,g\,d\sigma
=
\int_\Omega \nabla\varphi\cdot \nabla u 
\end{equation}
for any smooth test function~$\varphi$. The left hand side converges provided only that $\nabla u$ is integrable up to the boundary; thus, if $\Delta u=0$ in~$\Omega$ then we say that $\nu\cdot \nabla u=g$ on $\partial\Omega$ in the weak sense if the above equation is satisfied for all nice test functions~$\varphi$.

The $L^2$ Neumann problem for harmonic functions was shown to be well posed in bounded Lipschitz domains in \cite{JerK81B}, and in \cite{Ver84} it was shown that the solution to the Neumann problem may be written as a single layer potential. In \cite{KenP93}, Kenig and Pipher solved the Neumann problem in starlike Lipschitz domains for operators of the form~\eqref{eqn:divergence} of second order, (that is, with $2m=2$,) with real symmetric radially independent coefficients; essentially the same argument establishes well posedness of the Neumann problem for operators with real symmetric $t$-independent coefficients in the domain above a Lipschitz graph.

In the case of second order equations (but not higher order equations), a simple change of variables allows one to pass from the the half space $\R^\dmn_+$ to a domain above a Lipschitz graph. This change of variables preserves $t$-independence. Thus, much recent work in the second order case has considered the Neumann problem in the half space
\begin{equation}\label{eqn:Neumann:L2:second}\nabla\cdot \mat A\nabla u=0 \text{ in }\R^\dmn_+,\quad \MM_{\mat A}^+ u=g,\quad \doublebar{\widetilde N_+(\nabla u)}_{L^2(\R^n)}\leq C\doublebar{g}_{L^2(\R^n)}\end{equation}
where the Neumann boundary values $\MM_{\mat A}^+ u$ of a solution $u$ to $\Div \mat A\nabla u=0$ are given by
\begin{equation}\label{eqn:Neumann:second}
\int_{\R^\dmnMinusOne} \varphi(x,0)\MM_{\mat A}^+ u(x)\,dx = \int_{\R^\dmn_+} \nabla\varphi\cdot \mat A\nabla u\quad\text{for all $\varphi\in C^\infty_0(\R^\dmn)$.}\end{equation}
If $u$ and $\mat A$ are sufficiently smooth, then $\MM_{\mat A}^+ u=-\vec e_\dmn\cdot\mat A\nabla u=\nu\cdot \mat A\nabla u$. 

If the coefficients $\mat A$ are real, symmetric, and $t$-independent, recall that well posedness of the problem~\eqref{eqn:Neumann:L2:second} was established in \cite{KenP93}. 
This problem is also known to be well posed for certain classes of complex $t$-independent coefficients.
If $\mat A$ is complex and constant, then the problem may be solved using the Fourier transform.
Well posedness of the Neumann problem in $\R^\dmn_+$ in the case where $\mat A$ is a $t$-independent matrix in ``block'' form follows from the resolution of the Kato square root problem in~$\R^n$ established in \cite{AusHLMT02}. See \cite[Remark 2.5.6]{Ken94} and \cite{AlfAAHK11,AusAH08} for a discussion of block matrices. Well posedness was extended from the case of block matrices to that of block lower triangular matrices in \cite{AusMM13}.

Furthermore, well posedness of the $L^2$ Neumann problem in the half-space was shown in \cite{AlfAAHK11,AusAM10A} to be stable under $t$-independent $L^\infty$ perturbations; in particular, if $\mat A$ is $t$-independent and close in $L^\infty$ to a constant matrix, a variable self-adjoint matrix, or a variable block or block lower triangular matrix, then the Neumann problem is well posed. 

The Neumann problem with $L^2$ boundary data is known not to be well posed for arbitrary real nonsymmetric coefficients; see the appendix to~\cite{KenR09}. (One of the two main results of \cite{KenR09} is that the Neumann problem with $L^p$ boundary data, for $p>1$ sufficiently small, is well posed in $\R^2_+$.) 

We now turn to the case of the Neumann problem with boundary data in a negative Sobolev space $\dot W^2_{-1}$, that is, the dual space to the space $\dot W^2_1$ of functions whose gradient is square integrable. 

In \cite[Proposition~4.2]{Ver05}, Verchota established well posedness of the Neumann subregularity problem for harmonic functions in Lipschitz domains
\begin{equation*}\Delta u=0 \text{ in }\Omega,\quad \MM_I^\Omega u=h,\quad \doublebar{N_\Omega u}_{L^2(\partial\Omega)}\leq C\doublebar{h}_{\dot W^2_{-1}(\partial\Omega)}.\end{equation*}
For subregular solutions, (that is, for $u$ with $N_\Omega u\in L^2(\partial\Omega)$ rather than $N_\Omega(\nabla u)\in L^2(\partial\Omega)$,) the definition of Neumann boundary values $\MM_I^\Omega u$ must be modified, as the integral on the right hand side of formula~\eqref{eqn:neumann:harmonic} need not converge for all test functions $\varphi\in C^\infty_0(\R^\dmn)$; we refer the reader to \cite[Definition 4.1]{Ver05} for a precise definition.

In \cite[Section~11]{AusM14}, it was shown that if $\mat A$ is real or if the ambient dimension $\dmn=3$, (or, more generally, if all solutions $u$ to either $\Div\mat A\nabla u=0$ or $\Div\mat A^*u=0$ satisfy the De Giorgi-Nash-Moser condition of local H\"older continuity,) then solvability of the $L^2$ Neumann problem~\eqref{eqn:Neumann:L2:second} implies solvability of the $\dot W^2_{-1}$-Neumann problem
\begin{equation*}
\nabla\cdot\mat A\nabla u=0 \text{ in }\R^\dmn_+,\quad \MM_{\mat A}^+ u=h,\quad \doublebar{\mathcal{A}_2^+(t\nabla u)}_{L^2(\R^n)}\leq C\doublebar{h}_{\dot W^2_{-1}(\R^n)}
.\end{equation*}
Note that in this case, the estimates on solutions are given not in terms of nontangential maximal functions, but in terms of area integral estimates. (See formula~\eqref{eqn:lusin} below for a definition of~$\mathcal{A}_2^+$.) In a few cases, (see also the Dirichlet problem (D2) in \cite{AlfAAHK11} and Dirichlet problems in \cite[Theorem~6.6]{AusMM13} and \cite[Section~11]{AusM14},) it has proven more convenient to solve boundary value problems posed with area integral estimates; however, it is much more common to phrase well posedness in terms of nontangential estimates.

We now turn to the higher order $L^2$ Neumann problem. Higher order Neumann boundary values may be constructed as a generalization of the second order Neumann boundary values given by formula~\eqref{eqn:Neumann:second}.

In \cite{CohG85,Ver05,Agr07,Ver10,MitM13B}, the Neumann boundary values of a solution $u$ to $Lu=0$ in~$\Omega$, with $\nabla^m u$ locally integrable up to the boundary, were essentially given by
\begin{equation}\label{eqn:Neumann:Ver05}
\sum_{j=0}^{m-1} \int_{\partial\Omega} \partial_\nu^j \varphi\,(\vec \MM\vphantom{M}_{\mat A}^\Omega u)_j\,d\sigma = \!\!\!\!\sum_{\abs\alpha=\abs\beta=m}\int_\Omega \partial^\alpha\varphi\,A_{\alpha\beta}\,\partial^\beta u\quad\text{for all $\varphi\in C^\infty_0(\R^\dmn)$.}
\end{equation}
An integration by parts argument gives a precise formula for $\vec \MM\vphantom{M}_{\mat A}^\Omega u$ in the case where $u$, $\mat A$ and $\Omega$ are sufficiently smooth; see \cite{CohG85,Ver05} in the case where $L=\Delta^2$ is the biharmonic operator, and \cite[Proposition~4.3]{MitM13A} for general constant coefficients.

The biharmonic $L^2$-Neumann problem
\begin{equation}
\label{eqn:Ver05}
\left\{\begin{aligned}\Delta^2 u&=0\text{ in }\Omega,\quad \vec \MM\vphantom{M}_{\mat A}^\Omega u = (\Lambda,f)\text{ on }\partial\Omega,\\
\doublebar{N(\nabla^2 u)}_{L^2(\partial\Omega)}&\leq C\doublebar{f}_{L^2(\partial\Omega)} + C\doublebar{\Lambda}_{\dot W^2_{-1}(\partial\Omega)}\end{aligned}\right.
\end{equation}
was shown to be well posed in \cite{CohG85} in planar $C^1$ domains, and in \cite{Ver05} in Lipschitz domains of arbitrary dimension. Even in the case of general constant coefficients the Neumann problem is not known to be well posed for $L^2$ boundary data (although see \cite{Ver10} for some general discussion of this case and \cite{Agr07,MitM13A,MitM13B,Bar17pB} for the case of boundary data in a Besov space).

In \cite{BarHM17,BarHM18,Bar17,Bar17pB}, we used the formulation 
\begin{equation}
\label{eqn:Neumann}
\int_{\R^n} \sum_{\abs\gamma=m-1}\partial^\gamma\varphi(x,0)\,(\M_{\mat A}^+u(x))_\gamma\,dx = \sum_{\abs\alpha=\abs\beta=m}\int_{\R^\dmn_+} \partial^\alpha\varphi\,A_{\alpha\beta}\,\partial^\beta u
\end{equation}
for all $\varphi\in C^\infty_0(\R^\dmn)$, provided $\nabla^m u$ is locally integrable up to the boundary. (We must modify the definition if $\nabla^m u$ satisfies weaker estimates; see \cite{BarHM17pB} for a precise definition and for further discussion.)

It is generally much more difficult to construct a formula for $\M_{\mat A}^+ u$ in this case (although see \cite[formula~(2.13)]{BarHM17} in the biharmonic case). In fact, observe that $\M_{\mat A}^+ u$ is an operator on $\{\nabla^{m-1}\varphi\big\vert_{\partial\R^\dmn_+}:\varphi\in C^\infty_0(\R^\dmn)\}$. By equality of mixed partials, the components of  $\nabla^{m-1}\varphi$ must satisfy certain compatibility conditions; thus, this is not a dense subspace of the set of arrays of distributions, and so $\M_{\mat A}^+ u$ as given by formula~\eqref{eqn:Neumann} lies in a quotient space of the space of tempered distributions.

However, we have preferred the formulation~\eqref{eqn:Neumann} to the formulation~\eqref{eqn:Neumann:Ver05} because the different components of $\M_{\mat A}^+u$ given by formula~\eqref{eqn:Neumann} generally have the same order of smoothness, and the different components of $\vec \MM\vphantom{M}_{\mat A}^\Omega u$ given by formula~\eqref{eqn:Neumann:Ver05} do not; observe the presence of both a $L^2$ norm and a $\dot W^2_{-1}$ norm in the problem~\eqref{eqn:Ver05}.

In {\cite{BarHM18}}, we established well posedness of the $L^2$ and $\dot W^2_{-1}$ Neumann problems. Specifically, suppose that $L$ is an operator of the form~\eqref{eqn:divergence} associated to coefficients~$\mat A$ that are bounded, $t$-independent in the sense of formula~\eqref{eqn:t-independent}, self-adjoint in the sense that $A_{\alpha\beta}(x)=\overline{A_{\beta\alpha}(x)}$ for all $\abs\alpha=\abs\beta=m$, and satisfy the ellipticity condition 
\begin{equation}
\label{eqn:elliptic:slices}
\re\langle \nabla^m \varphi(\,\cdot\,,t),\mat A\nabla^m\varphi(\,\cdot\,,t)\rangle_{\R^n} \geq C\doublebar{\nabla^m \varphi(\,\cdot\,,t)}_{L^2(\R^n)}^2
\end{equation}
for all $\varphi$ smooth and compactly supported in~$\R^\dmn$ and all $t\in\R$.
Then by {\cite[Theorems~1.7 and~1.11]{BarHM18}}, the Neumann problem
\begin{equation}
\label{eqn:neumann:regular:2}
\left\{\begin{gathered}\begin{aligned}
Lw&=0 \text{ in }\R^\dmn_+
,\\
\M_{\mat A}^+ w &\owns \arr g,
\end{aligned}\\
\doublebar{\mathcal{A}_2^+(t\nabla^m \partial_t w)}_{L^2(\R^n)} + \sup_{t>0}\doublebar{\nabla^m w(,\cdot\,,t)}_{L^2(\R^n)}
\leq C\doublebar{\arr g}_{L^2(\R^\dmnMinusOne)}
\end{gathered}\right.\end{equation}
and the rough Neumann problem
\begin{equation}
\label{eqn:neumann:rough:2}
\left\{\begin{aligned}
Lv&=0 \text{ in }\R^\dmn_+
,\\
\M_{\mat A}^+ v &\owns \arr h,
\\
\doublebar{\mathcal{A}_2^+(t\nabla^m v)}_{L^2(\R^n)}
&\leq C\doublebar{\arr h}_{\dot W_{-1}^2(\R^\dmnMinusOne)}
.\end{aligned}\right.\end{equation}
are well posed. That is, if $\arr g\in L^2(\R^n)$ or $\arr h\in \dot W_{-1}^2(\R^\dmnMinusOne)$, then there is a solution $w$ or~$v$ to the problem \eqref{eqn:neumann:regular:2} or~\eqref{eqn:neumann:rough:2}, and this solution is unique up to adding polynomials of degree~$m-1$.

By $\M_{\mat A}^+ w \owns \arr g$ we mean that $\arr g$ is a representative of the equivalence class $\M_{\mat A}^+ w$ given by formula~\eqref{eqn:Neumann}; that is, if we replace $\M_{\mat A}^+ w$ by $\arr g$, then formula~\eqref{eqn:Neumann} is true for all $\varphi\in C^\infty_0(\R^\dmn)$.

In the problem~\eqref{eqn:neumann:rough:2}, the gradient $\nabla^m v$ of the solution $v$ is not assumed to be locally integrable up to the boundary; it is only assumed to satisfy
\begin{equation*}\int_{\R^n}\mathcal{A}_2^+(t\nabla^m v)^2 = c_n\int_{\R^n} \int_0^\infty \abs{\nabla^m v(x,t)}^2\,t\,dt\,dx<\infty.\end{equation*} As mentioned above, in this case the notion of Neumann boundary value of formula~\eqref{eqn:Neumann} must be modified somewhat; we refer the reader to \cite[formula~(2.16)]{BarHM17pB} for the necessary generalization.

We remind the reader that it is somewhat unusual to formulate boundary value problems in terms of area integrals; of the results mentioned above, \cite{FabJR78,JerK81B,Ver84,KenP93,KenR09,Ver05} formulated solutions in terms of nontangential maximal estimates, while \cite{AusAM10A} established both square function and area integral estimates, and \cite{AlfAAHK11,AusMM13,AusM14} formulated solutions for some problems in terms of nontangential estimates and others in terms of square function estimates.

Thus, one of the two main results of this paper is the addition of nontangential estimates to the higher order Neumann problem.

\begin{thm}\label{thm:Neumann:2:N}
Suppose that $L$ is an elliptic operator of the form~\eqref{eqn:divergence} of order~$2m$ associated with coefficients $\mat A$ that satisfy $\doublebar{\mat A}_{L^\infty(\R^\dmnMinusOne)}= \Lambda<\infty$ and the ellipticity condition~\eqref{eqn:elliptic:slices}, are $t$-independent in the sense of formula~\eqref{eqn:t-independent}, and are self-adjoint, that is, satisfy $A_{\alpha\beta}(x)=\overline{A_{\beta\alpha}(x)}$.

Let $\arr g\in L^2(\R^n)$ and $\arr h\in \dot W^2_{-1}(\R^n)$, and let $w$ and $v$ be the solutions to the problems \eqref{eqn:neumann:regular:2} and~\eqref{eqn:neumann:rough:2}, respectively.

There is a constant $C$, depending only on $\Lambda$, the ambient dimension~$\dmn$, order $2m$ of the operator $L$, and the ellipticity constant $\lambda$ in the bound~\eqref{eqn:elliptic:slices}, such that
\begin{equation*}\doublebar{\widetilde N_+(\nabla^m w)}_{L^2(\R^n)}
\leq C\doublebar{\arr g}_{L^2(\R^\dmnMinusOne)}.\end{equation*}
Recall that $v$ is unique up to adding polynomials of degree~$m-1$.
There is some such additive normalization of~$v$ that satisfies
\begin{equation*}\doublebar{\widetilde N_+(\nabla^{m-1} v)}_{L^2(\R^n)}
\leq C\doublebar{\arr h}_{\dot W_{-1}^2(\R^\dmnMinusOne)}.\end{equation*}
\end{thm}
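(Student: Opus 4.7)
The overall strategy is to deduce nontangential estimates from the already-known area-integral estimates by representing $w$ and $v$ as single layer potentials and exploiting the commutation of $\partial_t$ with $L$ afforded by $t$-independence. From \cite[Theorems~1.7 and~1.11]{BarHM18}, the Neumann boundary operator applied to the higher-order single layer potential $\mathcal{S}^L$ is invertible on $L^2(\R^n)$ and on $\dot W^2_{-1}(\R^n)$, so there exist densities $\arr\psi\in L^2(\R^n)$ and $\arr\phi\in \dot W^2_{-1}(\R^n)$ with $w=\mathcal{S}^L\arr\psi$ and $v=\mathcal{S}^L\arr\phi$ (the latter modulo a polynomial of degree $m-1$), with $\doublebar{\arr\psi}_{L^2}\lesssim \doublebar{\arr g}_{L^2}$ and $\doublebar{\arr\phi}_{\dot W^2_{-1}}\lesssim \doublebar{\arr h}_{\dot W^2_{-1}}$. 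The theorem therefore reduces to the nontangential single-layer-potential estimates $\doublebar{\widetilde N_+(\nabla^m \mathcal{S}^L\arr\psi)}_{L^2}\lesssim \doublebar{\arr\psi}_{L^2}$ and $\doublebar{\widetilde N_+(\nabla^{m-1}\mathcal{S}^L\arr\phi)}_{L^2}\lesssim \doublebar{\arr\phi}_{\dot W^2_{-1}}$, which are precisely the layer-potential bounds the paper is designed to establish.

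To pass from the known $\mathcal{A}_2^+$-estimates to the desired $\widetilde N_+$-estimates, I exploit $t$-independence: $\partial_t\mathcal{S}^L\arr\psi$ is itself an $L$-solution, so for any multi-index $|\gamma|=m$,
\begin{equation*}
\partial^\gamma \mathcal{S}^L\arr\psi(x,t) = -\int_t^\infty \partial_s\partial^\gamma \mathcal{S}^L\arr\psi(x,s)\,ds
\end{equation*}
once decay at infinity is verified. (In the subregular problem one first chooses the degree-$(m-1)$ polynomial normalization of $v$ so that $\nabla^{m-1}v$ vanishes in an averaged sense at $\infty$.) Replacing pointwise values by $L^2$ averages over Whitney balls via interior Caccioppoli and Moser-type regularity for $L$, and splitting the outer integral dyadically in $t$, yields a pointwise bound of the form
\begin{equation*}
\widetilde N_+(\nabla^m\mathcal{S}^L\arr\psi)(x) \lesssim \mathcal M\bigl[\mathcal{A}_2^+(t\nabla^m\partial_t\mathcal{S}^L\arr\psi)\bigr](x),
\end{equation*}
with $\mathcal M$ the Hardy--Littlewood maximal function. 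Taking $L^2$ norms and invoking $\doublebar{\mathcal{A}_2^+(t\nabla^m\partial_t\mathcal{S}^L\arr\psi)}_{L^2}\lesssim \doublebar{\arr\psi}_{L^2}$ from \cite{BarHM18} completes the regular case. The subregular case is analogous: $\partial_t\nabla^{m-1}\mathcal{S}^L\arr\phi$ is a component of $\nabla^m \mathcal{S}^L\arr\phi$, so $\mathcal{A}_2^+(t\partial_t\nabla^{m-1}\mathcal{S}^L\arr\phi)\leq \mathcal{A}_2^+(t\nabla^m\mathcal{S}^L\arr\phi)$, which is controlled by $\doublebar{\arr\phi}_{\dot W^2_{-1}}$.

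The main obstacle I anticipate lies in making this area-to-nontangential reduction rigorous in the higher-order setting. The polynomial ambiguity of the subregular solution must be resolved correctly to secure decay at infinity, which is delicate because $\nabla^{m-1}v$ is not a priori controlled in slices. Moreover, passing between the modified nontangential operator (which uses $L^2$ averages over Whitney balls) and pointwise $\partial_t$-derivatives of $\mathcal{S}^L$ requires interior regularity estimates for $L$ applied to $\partial_t^k\mathcal{S}^L$, invoked in conjunction with the ellipticity hypothesis \eqref{eqn:elliptic:slices} so that the resulting constants depend only on the data of the theorem. I expect this reduction, rather than the layer-potential representation of the first step, to be where the real work lies.
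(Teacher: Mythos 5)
Your proposal diverges from the paper's route in two fundamental ways, and both create gaps.

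\textbf{The representation step.} You write $w=\s^L\arr\psi$ and $v=\s^L\arr\phi$ and assert that \cite{BarHM18} establishes invertibility of $\M_{\mat A}^+\s^L$ on $L^2(\R^n)$ and $\dot W^2_{-1}(\R^n)$. This is not what the paper attributes to that reference. As stated in Section~1.2, the construction in \cite[Section~7]{BarHM18} yields $w=\D^{\mat A}\arr\varphi$ and $v=\D^{\mat A}\arr f$ with $\arr\varphi=(\M_{\mat A}^+\D^{\mat A})^{-1}\arr g$ and $\arr f=(\M_{\mat A}^+\D^{\mat A})^{-1}\arr h$ lying in the Whitney spaces $\dot W\!A^2_{m-1,1}$ and $\dot W\!A^2_{m-1,0}$. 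That is, the invertibility that is actually available is that of $\M_{\mat A}^+\D^{\mat A}$ on Whitney spaces, not of $\M_{\mat A}^+\s^L$ on $L^2$ or $\dot W^2_{-1}$. In the higher-order setting, invertibility of the single-layer Neumann operator is not automatic and would require a separate jump-relation and Rellich-type argument; you cannot borrow it from \cite{BarHM18}. Consequently the reduction to single-layer nontangential bounds does not get off the ground.

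\textbf{The $\widetilde N$-by-$\mathcal{A}_2$ reduction.} Even granting a single-layer representation, your proposed passage from $\doublebar{\mathcal{A}_2^+(t\nabla^m\partial_t\s^L\arr\psi)}_{L^2}\lesssim\doublebar{\arr\psi}_{L^2}$ to $\doublebar{\widetilde N_+(\nabla^m\s^L\arr\psi)}_{L^2}\lesssim\doublebar{\arr\psi}_{L^2}$ is the genuinely hard part, and the mechanism you sketch does not deliver it. The claimed pointwise bound $\widetilde N_+(\nabla^m\s^L\arr\psi)(x)\lesssim \mathcal{M}\bigl[\mathcal{A}_2^+(t\nabla^m\partial_t\s^L\arr\psi)\bigr](x)$ does not follow from $\partial^\gamma u(x,t)=-\int_t^\infty\partial_s\partial^\gamma u(x,s)\,ds$ together with Caccioppoli and interior regularity: after Cauchy--Schwarz and dyadic splitting the logarithmic divergence in $s$ does not close, and the classical ``$N$ controlled by $S$'' estimates for second-order equations are proved via good-$\lambda$ inequalities, not pointwise domination, and rely on De Giorgi--Nash--Moser properties that are unavailable for general higher-order operators. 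The paper's own machinery in Lemma~\ref{lem:lusin:+} goes in the \emph{opposite} direction, bounding $\mathcal{A}_2^+$ by $\widetilde N_*$; the direction you need is simply not in the toolkit. In fact the paper proves the nontangential estimate~\eqref{eqn:S:N:intro} \emph{first}, by entirely different means (Calder\'on--Zygmund weak-$(1,1)$ estimates on $\partial_t^m\s^L$ in Lemma~\ref{lem:p:range:vertical}, nontangential extension in Lemma~\ref{lem:nontangential}, an Iwaniec self-improvement in Lemma~\ref{lem:p:range}, and a reduction to high-order operators via $\Delta^M L\Delta^M$), and only afterwards deduces the $p>2$ area-integral bounds from it. Theorem~\ref{thm:Neumann:2:N} is then obtained from the double-layer nontangential bounds~\eqref{eqn:D:N:intro} and~\eqref{eqn:D:N:rough:intro}, which are the real content of the paper; your proposal assumes away essentially all of that work.
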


\subsection{Layer potentials}\label{sec:potentials:introduction}

The proof of Theorem~\ref{thm:Neumann:2:N} is as follows.
An examination of the proofs of \cite[Theorems~1.7 and~1.11]{BarHM18} in \cite[Section~7]{BarHM18} reveals that 
\begin{equation*}w=\D^{\mat A}\arr \varphi\quad\text{and}\quad v=\D^{\mat A}\arr f,\end{equation*}
where $\D^{\mat A}$ is the higher order double layer potential introduced in \cite{BarHM17,Bar17} (and defined in formula~\eqref{dfn:D:newton:+} below), and where $\arr \varphi=(\M_{\mat A}^+\D^{\mat A})^{-1}\arr g$ and $\arr f=(\M_{\mat A}^+\D^{\mat A})^{-1}\arr h$ lie in the Whitney spaces $\dot W\!A^2_{m-1,1}(\R^n)$ and $\dot W\!A^2_{m-1,0}(\R^n)$, respectively, used in \cite{BarHM18} (see Definition~\ref{dfn:Whitney} below). Theorem~\ref{thm:Neumann:2:N} then follows from the bounds
\begin{align*}
\doublebar{\widetilde N_+(\nabla^m \D^{\mat A}\arr \varphi)}_{L^2(\R^n)} 
&\leq C\doublebar{\arr \varphi}_{\dot W\!A^2_{m-1,1}(\R^n)}
,\\
\doublebar{\widetilde N_+(\nabla^{m-1} \D^{\mat A}\arr f)}_{L^2(\R^n)} 
&\leq C\doublebar{\arr f}_{\dot W\!A^2_{m-1,0}(\R^n)}
.\end{align*}

Thus, the double layer potential is of great interest in the theory of the higher order Neumann problem. The related single layer potential $\s^L$ is also of interest. It is often possible to use bounds on the single layer potential $\s^L$ to establish bounds on the double layer potential~$\D^{\mat A}$; see, for example, Section~\ref{sec:D:boundary} below. Bounds on the single layer potential were used to establish \cite[Theorem~1.6]{BarHM17pB}; this Fatou type result establishes existence of Dirichlet and Neumann boundary values of solutions. In the second order case, the single layer potential was used to construct solutions to the Dirichlet or Dirichlet regularity problem in \cite{FabJR78, Ver84, MayMit04A, 
AlfAAHK11,Bar13,BarM16A,HofKMP15B,HofMitMor15,HofMayMou15}; we hope that in future work, we may similarly use the single layer potential to solve the higher order Dirichlet problem.

Thus, nontangential bounds on layer potentials are of independent interest. 
The following theorem is the second main result of this paper; note that Theorem~\ref{thm:Neumann:2:N} follows from Theorem~\ref{thm:potentials}, and in particular from the bounds~\eqref{eqn:D:N:intro} and~\eqref{eqn:D:N:rough:intro}.

\begin{thm}\label{thm:potentials} Suppose that $L$ is an elliptic operator of the form~\eqref{eqn:divergence} of order~$2m$ associated with coefficients $\mat A$ that satisfy the ellipticity conditions \eqref{eqn:elliptic:bounded} and~\eqref{eqn:elliptic} and are $t$-independent in the sense of formula~\eqref{eqn:t-independent}.

Then there is an $\varepsilon>0$, depending only on the dimension $\dmn$, the order $2m$ of the operator~$L$, and the constants $\lambda$ and $\Lambda$ in the bounds \eqref{eqn:elliptic:bounded} and~\eqref{eqn:elliptic}, with the following significance.

If $2-\varepsilon<p<2+\varepsilon$, then there is a constant $C_p$ such that if 
$\arr g\in L^p(\R^n)$, $\arr h\in L^p(\R^n)$, $\arr f\in {\dot W\!A^p_{m-1,0}(\R^n)}$ and $\arr \varphi\in {\dot W\!A^p_{m-1,1}(\R^n)}$, then 
\begin{align}
\label{eqn:S:N:intro}
\doublebar{\widetilde N_+(\nabla^m \s^L\arr g)}_{L^p(\R^n)} 
&\leq C_p\doublebar{\arr g}_{L^p(\R^n)}
,&&2-\varepsilon<p<2+\varepsilon
,\\
\label{eqn:S:N:rough:intro}
\doublebar{\widetilde N_+(\nabla^{m-1} \s^L_\nabla\arr h)}_{L^p(\R^n)} 
&\leq C_p\doublebar{\arr h}_{L^p(\R^n)}
,&&2-\varepsilon<p<2+\varepsilon
,\\
\label{eqn:D:N:intro}
\doublebar{\widetilde N_+(\nabla^m \D^{\mat A}\arr \varphi)}_{L^p(\R^n)} 
&\leq C_p\doublebar{\arr \varphi}_{\dot W\!A^p_{m-1,1}(\R^n)}
,&&2-\varepsilon<p<2+\varepsilon
,\\
\label{eqn:D:N:rough:intro}
\doublebar{\widetilde N_+(\nabla^{m-1} \D^{\mat A}\arr f)}_{L^p(\R^n)} 
&\leq C_p\doublebar{\arr f}_{\dot W\!A^p_{m-1,0}(\R^n)}
,&&2-\varepsilon<p<2+\varepsilon
.\end{align}
\end{thm}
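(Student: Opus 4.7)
The plan is to start from the $L^2$ area-integral / square-function bounds on the layer potentials that are already established in [BarHM17, BarHM17pA, BarHM18] or follow from their methods, upgrade them to nontangential $L^2$ bounds, transfer the bounds for $\s^L$ to bounds for $\D^{\mat A}$, and finally open up the interval of $p$'s around~$2$ by a self-improvement argument. I would organize the proof in this order.

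First I would handle the single-layer estimates \eqref{eqn:S:N:intro} and \eqref{eqn:S:N:rough:intro} at $p=2$. The key observation is that $t$-independence of $\mat A$ implies $L(\partial_t u)=0$ whenever $Lu=0$ in $\R^\dmn_+$, so higher-order Caccioppoli gives interior bounds both for $\nabla^m u$ and for $\nabla^m\partial_t u$ in terms of $L^2$-averages on nearby Whitney cubes. Writing $\nabla^m u(\,\cdot\,,t) = -\int_t^\infty \nabla^m\partial_s u(\,\cdot\,,s)\,ds$, which is valid because the slice norms $\|\nabla^m u(\,\cdot\,,s)\|_{L^2}$ tend to zero as $s\to\infty$ for $u=\s^L\arr g$ with $\arr g\in L^2$, and combining with Caccioppoli, I would derive the pointwise domination
\begin{equation*}
\widetilde N_+(\nabla^m u)(x) \lesssim \mathcal{M}\bigl[\mathcal A_2^+(t\,\nabla^m\partial_t u)\bigr](x),
\end{equation*}
where $\mathcal{M}$ is the Hardy--Littlewood maximal function on $\R^\dmnMinusOne$. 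The $L^2$-boundedness of $\mathcal{M}$, combined with the area-integral bound $\doublebar{\mathcal A_2^+(t\,\nabla^m\partial_t\s^L\arr g)}_{L^2}\lesssim \doublebar{\arr g}_{L^2}$ from [BarHM17pA, BarHM18], then yields \eqref{eqn:S:N:intro} at $p=2$. The rough bound \eqref{eqn:S:N:rough:intro} is obtained in the same way, shifted down by one derivative, using the square-function bounds for $\s^L_\nabla$ and the fact that $\partial_t \s^L_\nabla$ is (up to the tangential derivative structure of the operator) built from $\s^L$ applied to the data.

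Second, I would transfer these single-layer bounds to the double-layer bounds \eqref{eqn:D:N:intro} and \eqref{eqn:D:N:rough:intro} using the Green-type identity expressing $\D^{\mat A}\arr\varphi$ as $\s^L$ applied to tangential derivatives of the Whitney array $\arr\varphi$; this is the content of the section on boundary values of the double layer potential referenced in the introduction. The Whitney norm on $\dot W\!A^p_{m-1,1}(\R^\dmnMinusOne)$ controls the $L^p$-norm of the tangential derivatives of the array, and the norm on $\dot W\!A^p_{m-1,0}(\R^\dmnMinusOne)$ controls the $L^p$-norm of the array itself, so this reduction is essentially bookkeeping once \eqref{eqn:S:N:intro} and \eqref{eqn:S:N:rough:intro} are in hand.

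Third, to open up the range $2-\varepsilon<p<2+\varepsilon$, I would invoke a Shen-type self-improvement argument: the interior Meyers reverse-Hölder inequality for solutions to higher-order divergence-form systems of the form \eqref{eqn:divergence} yields a reverse-Hölder inequality for $\nabla^m u$ with some exponent strictly greater than $2$, which combined with the $L^2$ nontangential bounds established in the first two stages propagates to $L^p$ nontangential bounds for $p$ in a small neighborhood of~$2$ via a good-$\lambda$ or extrapolation argument. The step I expect to be the main obstacle is the careful handling of the subregular (rough) case: since $\nabla^m \s^L_\nabla\arr h$ and $\nabla^m \D^{\mat A}\arr f$ need not be locally integrable up to $\R^\dmnMinusOne\times\{0\}$, the reduction of $\widetilde N_+(\nabla^{m-1}\cdot)$ to a vertical integral of $\partial_t\nabla^{m-1}(\cdot)$ must be done in a way that starts from a finite height determined by the data rather than from $t=\infty$, and that respects the quotient-space nature of the solution (unique only modulo polynomials of degree $m-1$); this is where the subregular bounds of [BarHM18] enter critically and where one has to keep careful track of the constants in $p$.
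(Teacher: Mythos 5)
Your proposal diverges from the paper's actual proof in its central mechanism, and I believe the key step does not work as stated.

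The pointwise domination
$\widetilde N_+(\nabla^m u)(x) \lesssim \mathcal{M}\bigl[\mathcal A_2^+(t\,\nabla^m\partial_t u)\bigr](x)$
is not true, and the vertical-integration argument you sketch does not produce it. Writing $\nabla^m u(\,\cdot\,,t) = -\int_t^\infty \nabla^m\partial_s u(\,\cdot\,,s)\,ds$ and then applying Cauchy--Schwarz against the weight $s\,ds$ leaves behind the factor $\bigl(\int_t^\infty ds/s\bigr)^{1/2}$, which diverges. Even after regularizing with Caccioppoli and Meyers on Whitney balls, you pick up $\int_r^\infty\int_{|y-w|<\sigma}|F(w,\sigma)|\,dw\,d\sigma/\sigma^n$, and the Cauchy--Schwarz step to reach $\mathcal A_2^+(tF)$ still produces the logarithmically divergent tail. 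Norm-level comparisons between $\widetilde N_+$ and $\mathcal A_2^+$ do exist (good-$\lambda$), but a pointwise maximal-function domination is too strong, and there is no way to avoid the tail divergence without an additional boundary anchor. The paper's Lemma~\ref{lem:iterate} handles exactly this by retaining the boundary term $\fint_{2Q}|\nabla^j u(\,\cdot\,,\tau)|$ as a separate contribution, which is then controlled by proving that the boundary trace operators $\Tr_m^\pm\s^L$, $\Tr_{m-1}^\pm\s^L_\nabla$ are bounded on $L^p$ for $p$ near $2$. That trace bound is obtained by a genuinely different route from what you propose: a Calder\'on--Zygmund weak-$(1,1)$ argument on the kernel $\partial_{x,t}^\alpha\partial_s^{m-1}E^{L^*}$ (Lemma~\ref{lem:p:range:vertical}), interpolation, duality between $\s^L$ and $\s^{L^*}_\nabla$ via formula~\eqref{eqn:S:dual}, and then the Iwaniec self-improvement Lemma~\ref{lem:iwaniec} (Lemma~\ref{lem:p:range}) to pass from vertical-derivative traces to full-gradient traces. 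The Meyers reverse H\"older inequality you invoke for the final extrapolation step is indeed used, but it enters at the level of boundary traces rather than nontangential maximal functions, and it also feeds a separate reduction you do not mention: the Calder\'on--Zygmund kernel estimate requires pointwise H\"older bounds on $\nabla^{m+1}E^L$, which only hold when $2m\geq n+3$, so the paper works first with $\widetilde L = \Delta^M L\Delta^M$ of large order and then passes back via the algebraic identities~\eqref{eqn:S:high}, \eqref{eqn:S:variant:high}, and Lemma~\ref{lem:D:high}.

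Your treatment of the double layer potential is also more optimistic than the paper warrants. There is no Green-type identity expressing $\D^{\mat A}\arr\varphi$ as $\s^L$ applied to tangential derivatives of the Whitney array; for variable $t$-independent coefficients the double layer potential is a distinct object. The paper's route for $\Tr_{m-1}^+\D^{\mat A}$ and $\Tr_m^+\D^{\mat A}$ is duality with $\M_{\mat A^*}^-\s^{L^*}$ (formula~\eqref{eqn:S:D:dual}) combined with the Fatou-type theorem of [BarHM17pB], plus a recursive reduction formula~\eqref{eqn:D:D:S} that trades a $\partial_t$ for a $\s^L_\nabla$ and a shifted $\D^{\mat A}$. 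To then establish the nontangential estimate~\eqref{eqn:D:N:intro}, the paper does not re-use the single-layer nontangential bound at all: Lemma~\ref{lem:D:N:high} works directly with the Newton potential representation~\eqref{dfn:D:newton:-}, a specially constructed localization $\arr\varphi_R$ of the Whitney array (Definition~\ref{dfn:D:local}), a geometric decomposition $\arr\varphi=\sum\arr\varphi_j$ with Poisson-type extensions $H_j$, and the $L^q$ boundedness of $\nabla^m\Pi^L$. This is a substantial standalone argument, not bookkeeping. So the overall strategy you describe (upgrade $L^2$ area-integral to $L^2$ nontangential by pointwise maximal comparison, then transfer to $\D^{\mat A}$, then open the $p$-range) has a broken first step, a missing high-order-reduction device, and underestimates the work required on the double layer potential.
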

Here ${\dot W\!A^p_{m-1,1}(\R^n)}$ and ${\dot W\!A^p_{m-1,0}(\R^n)}$ are closed proper subsets of $\dot W^p_1(\R^n)$ and $L^p(\R^n)$, respectively; these subsets are the natural domain of $\D^{\mat A}$ in those spaces. See Definition~\ref{dfn:Whitney}. The modified single layer potential $\s^L_\nabla$ is the higher order analogue of the operator $\s^L\nabla$ used in \cite{AlfAAHK11,HofMitMor15,HofMayMou15}.
We will define $\s^L$ and $\s^L_\nabla$ in Section~\ref{sec:dfn:S}. Loosely speaking (see Lemma~\ref{lem:S:variant} below), we have that $\s^L_\nabla(h\arr e_\alpha)=-\s^L(\partial_j h\arr e_\zeta)$ 
whenever $1\leq j\leq \dmnMinusOne$ and $\alpha=\zeta+\vec e_j$; thus, formula~\eqref{eqn:S:N:rough:intro} gives a bound on the standard single layer potential with inputs in a negative smoothness space.

We now summarize the known bounds on higher order layer potentials. We will use these bounds to establish the nontangential estimates of Theorem~\ref{thm:potentials}. By definition (see formulas \eqref{dfn:D:newton:+} and \eqref{dfn:S} below), we have the bounds
\begin{equation*}\doublebar{\nabla^m \s^L\arr g}_{L^2(\R^\dmn)}\leq C\doublebar{\arr g}_{\dot B^{2,2}_{-1/2}(\R^n)},
\quad
\doublebar{\nabla^m \D^{\mat A}\arr f}_{L^2(\R^\dmn_\pm)}\leq C\doublebar{\arr f}_{\dot W\!A^2_{m-1,1/2}(\R^n)}
\end{equation*}
for all $\arr g\in {\dot B^{2,2}_{-1/2}(\R^n)}$ and all $\arr f\in {\dot W\!A^2_{m-1,1/2}(\R^n)}$.
The main result of \cite{BarHM17} is that the double and single layer potentials extend by density to operators that satisfy the bounds
\begin{align}
\label{eqn:D:lusin:2}
\doublebar{\mathcal{A}_2^+(t\nabla^m\partial_t\D^{\mat A}\arr \varphi)}_{L^2(\R^n)} 
&\leq C\doublebar{\arr \varphi}_{\dot W\!A^2_{m-1,1}(\R^n)}
,\\
\label{eqn:S:lusin:2}
\doublebar{\mathcal{A}_2^+(t\nabla^m\partial_t\s^L\arr g)}_{L^2(\R^n)} 
&
\leq C\doublebar{\arr g}_{L^2(\R^n)}
\end{align}
for all $\arr\varphi\in {\dot W\!A^2_{m-1,1}(\R^n)}$ and all $\arr g\in L^2(\R^n)$.

In \cite[Theorem~1.6]{BarHM17pA}, it was shown that if $\arr f\in {\dot W\!A^2_{m-1,0}(\R^n)}$, then
\begin{align}
\label{eqn:D:lusin:rough:2}
\doublebar{\mathcal{A}_2^+(t\nabla^m\D^{\mat A}\arr f)}_{L^2(\R^n)} 
&\leq C\doublebar{\arr f}_{\dot W\!A^2_{m-1,0}(\R^n)}
.\end{align}
Finally, in \cite[Theorem~1.13]{BarHM17pA}, the bound \eqref{eqn:S:lusin:2} was extended to $\arr g\in L^p$ for some $p<2$, and a bound on $\s^L_\nabla$ was established. Specifically, it was shown that there was some $\varepsilon>0$ such that, if $2-\varepsilon<p\leq 2$, then there is a $C_p$ such that for all $\arr g\in L^p(\R^n)$ and all $\arr h\in L^p(\R^n)$,
\begin{align}
\label{eqn:S:lusin:-}
\doublebar{\mathcal{A}_2^+(t\nabla^m\partial_t\s^L\arr g)}_{L^p(\R^n)} 
&
\leq C_p\doublebar{\arr g}_{L^p(\R^n)}, && 2-\varepsilon<p\leq 2
,\\
\label{eqn:S:lusin:rough:-}
\doublebar{\mathcal{A}_2^+(t\nabla^m\s^L_\nabla\arr h)}_{L^p(\R^n)} 
&\leq C_p\doublebar{\arr h}_{L^p(\R^n)}, && 2-\varepsilon<p\leq 2.
\end{align}

Observe that these known bounds all involve inputs in $L^p$ for $p=2$ or $p<2$.
In the course of proving Theorem~\ref{thm:potentials}, we will also establish the following area integral estimates for inputs in $L^p$ with $p>2$.
\begin{thm}\label{thm:lusin:intro} Let $L$ and $\mat A$be as in Theorem~\ref{thm:potentials}. Then there is an $\varepsilon>0$ such that, if $2<p<2+\varepsilon$, then there is a constant $C_p$ such that if 
$\arr g\in L^p(\R^n)$, $\arr h\in L^p(\R^n)$, $\arr f\in {\dot W\!A^p_{m-1,0}(\R^n)}$ and $\arr \varphi\in {\dot W\!A^p_{m-1,1}(\R^n)}$, then 
\begin{align}
\label{eqn:S:lusin:intro}
\doublebar{\mathcal{A}_2^+(t\nabla^m\partial_t\s^L\arr g)}_{L^p(\R^n)} 
&\leq C_p\doublebar{\arr g}_{L^p(\R^n)}
,&&2<p<2+\varepsilon
,\\
\label{eqn:S:lusin:rough:intro}
\doublebar{\mathcal{A}_2^+(t\nabla^m\s^L_\nabla\arr h)}_{L^p(\R^n)} 
&\leq C_p\doublebar{\arr h}_{L^p(\R^n)}
,&&2<p<2+\varepsilon
,\\
\label{eqn:D:lusin:intro}
\doublebar{\mathcal{A}_2^+(t\nabla^m\partial_t\D^{\mat A}\arr \varphi)}_{L^p(\R^n)} 
&\leq C_p\doublebar{\arr \varphi}_{\dot W\!A^p_{m-1,1}(\R^n)}
,&&2<p<2+\varepsilon
,\\
\label{eqn:D:lusin:rough:intro}
\doublebar{\mathcal{A}_2^+(t\nabla^m\D^{\mat A}\arr f)}_{L^p(\R^n)} 
&\leq C_p\doublebar{\arr f}_{\dot W\!A^p_{m-1,0}(\R^n)}
,&&2<p<2+\varepsilon
.\end{align}
\end{thm}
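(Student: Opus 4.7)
The plan is to upgrade the $L^2$ area-integral bounds~\eqref{eqn:S:lusin:2}, \eqref{eqn:D:lusin:2}, and~\eqref{eqn:D:lusin:rough:2} to $L^p$ for $p$ slightly larger than $2$ by means of Shen's real-variable $L^p$-extrapolation technique. Recall Shen's criterion: a sublinear operator $T$ bounded on $L^2(\R^n)$ extends to a bounded operator on $L^p(\R^n)$ for every $2 < p < q_0$ provided that one has a local reverse Hölder estimate
\begin{equation*}
\biggl(\frac{1}{\abs{B}}\int_B \abs{T\arr g}^{q_0}\biggr)^{1/q_0} \le C\biggl(\frac{1}{\abs{CB}}\int_{CB} \abs{T\arr g}^{2}\biggr)^{1/2} + C\bigl(M(\abs{\arr g}^{2})(x_B)\bigr)^{1/2}
\end{equation*}
uniformly over balls $B \subset \R^n$ and inputs $\arr g$ with $\supp \arr g \cap CB = \emptyset$, where $M$ denotes the Hardy--Littlewood maximal function. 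I would apply this separately to each of the four sublinear operators appearing in the statement of Theorem~\ref{thm:lusin:intro}.

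The local reverse Hölder estimate is supplied by interior higher integrability for solutions. Fix a ball $B \subset \R^n$ of radius $r$, fix $\arr g$ with $\supp \arr g \cap CB = \emptyset$, and set $u := \s^L \arr g$ (or $\D^{\mat A}\arr \varphi$, etc.). Since the data are supported far from $B$, $u$ satisfies $Lu = 0$ in a neighborhood of the Carleson region $T_B := 2B \times (0, 4r) \subset \R^\dmn_+$. Caccioppoli combined with Gehring's lemma (Meyers' reverse Hölder inequality for gradients of solutions to divergence-form elliptic systems with bounded measurable coefficients) then yields $\varepsilon_0 > 0$ and $C > 0$, depending only on the ellipticity and dimension, such that
\begin{equation*}
\biggl(\frac{1}{\abs{T_B}}\int_{T_B} \abs{\nabla^m\partial_t u}^{2+2\varepsilon_0}\biggr)^{1/(2+2\varepsilon_0)} \le C\biggl(\frac{1}{\abs{2T_B}}\int_{2T_B} \abs{\nabla^m\partial_t u}^{2}\biggr)^{1/2}.
\end{equation*}
Splitting the Whitney cone defining $\mathcal{A}_2^+(t\nabla^m\partial_t u)(x)$ for $x \in B$ into the portion contained in $T_B$ and the portion outside $T_B$, applying Fubini together with the above reverse Hölder to the inner portion, and bounding the outer portion by the $L^2$ area-integral norm over $CB$ should produce Shen's criterion with $q_0 = 2 + 2\varepsilon_0$.

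The identical strategy handles $\D^{\mat A}$ in place of $\s^L$ once one replaces the $L^p(\R^n)$-norm of $\arr g$ by the appropriate Whitney-space norm of the input, since $\D^{\mat A}\arr f$ is likewise a solution of $L$ in $\R^\dmn_+$. The rough single layer $\s^L_\nabla$ is handled directly by the same argument, or, using the identity in Lemma~\ref{lem:S:variant}, reduced to $\s^L$ applied to inputs carrying one tangential derivative. The main technical obstacle will lie in the cone-decomposition step: one must show that the portion of the Whitney cone above $x \in B$ which crosses the top or lateral boundary of $T_B$ is genuinely controlled by the $L^2$ area-integral norm on $CB$, rather than contributing an uncontrolled tail. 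This amounts to a quantitative off-diagonal estimate on the layer-potential kernel, or equivalently a Caccioppoli-type bound at the lateral boundary of $T_B$, and this is the only nontrivial ingredient beyond the $L^2$ (and $p<2$) bounds already established in~\cite{BarHM17, BarHM17pA}.
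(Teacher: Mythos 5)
Your overall strategy — a Gehring/Shen-type self-improvement from $L^2$ to $L^p$ for $p$ slightly above $2$, driven by a local reverse H\"older inequality for the area functional — matches the paper's in spirit: the paper's engine is Iwaniec's lemma (Lemma~\ref{lem:iwaniec}), applied inside Lemma~\ref{lem:lusin:+} to $g = \mathcal{A}_2^+(t\nabla^m u)$, which is a variant of the same real-variable machinery. However, there is a genuine gap in your key step, namely the claim that Meyers' higher integrability of $\nabla^m\partial_t u$ in the Carleson box $T_B$ yields the reverse H\"older you need for the truncated area integral on $B$. Meyers controls $L^{2+2\varepsilon_0}$ spatial averages of $\nabla^m\partial_t u$ by $L^2$ averages; it does \emph{not} control $\int_B (\mathcal{A}^+_{2,<r})^{q_0}$ by $\int_{CB}(\mathcal{A}^+_2)^2$. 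Indeed, if you try to produce even a pointwise bound of the truncated area integral for $x\in B$ by iterating Caccioppoli (to trade $\nabla^m$ for $\nabla^{m-1}$, as the Carleson weight $t$ demands), you run into a logarithmic divergence $\int_0^r dt/t$ at the bottom of the cone: the Caccioppoli gain of $t^2$ is exactly cancelled by the Carleson measure $t^{2-(n+1)}\,dy\,dt$ together with the volume factor $t^n$, and nothing is left over. ``Fubini together with the above reverse H\"older'' does not close this estimate, because Fubini only simplifies the $L^2$ level, not $L^{q_0}$.

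What actually closes the near-boundary estimate in the paper is the modified nontangential maximal function $\widetilde N_*(\nabla^{m-1}u)$, already established in Theorem~\ref{thm:potentials} (Lemma~\ref{lem:S:N} for $\s^L$, $\s^L_\nabla$; Sections~\ref{sec:D:N:high}--\ref{sec:D:high} for $\D^{\mat A}$). Specifically, in Lemma~\ref{lem:lusin:+} the near-boundary contribution $\int_{(3/2)Q}\int_0^{\ell(Q)/4}\abs{\nabla^m(u-u_Q)}^2 t\,dt\,dx$ is reduced by solid-region Caccioppoli to $\int_{3Q}\widetilde N_*(\nabla^{m-1}(u-u_Q))^2$, and the resulting reverse H\"older is only the weak Iwaniec form $(\fint_Q g^2)^{1/2}\lesssim\fint_{4Q}g+(\fint_{4Q}h^2)^{1/2}$ with $h$ including \emph{both} the data \emph{and} $\widetilde N_*(\nabla^{m-1}u)$; Iwaniec's lemma then self-improves beyond exponent $2$. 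The final estimate reads $\doublebar{\mathcal{A}_2^+(t\nabla^m u)}_{L^p}\lesssim\doublebar{\psi}_{L^p}+\doublebar{\widetilde N_*(\nabla^{m-1}u)}_{L^p}$, and the second term is precisely the nontangential bound of Theorem~\ref{thm:potentials}. Your proposal never invokes those nontangential estimates, which in the paper are the hard-won prerequisite for the $p>2$ area-integral bounds; meanwhile, the ``cone-decomposition step'' you flag as the main technical obstacle (controlling the portion of the cone above height $r$) is in fact the routine half of the argument and is disposed of by an elementary geometric averaging over a grid of subcubes in the proof of Lemma~\ref{lem:lusin:+}.
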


In a forthcoming paper, we will show that the bounds \eqref{eqn:D:lusin:intro} and~\eqref{eqn:D:lusin:rough:intro} extend to the case $2-\varepsilon<p<2$ (that is, we will establish the analogues to the bounds \eqref{eqn:S:lusin:-} and~\eqref{eqn:S:lusin:rough:-} for the double layer potential).

The theory of layer potentials for higher order operators is still relatively new, and thus to our knowledge the above represents a nearly comprehensive survey of bounds on layer potentials for operators of order $2m\geq 4$ with $t$-independent coefficients in the half-space. (Some additional bounds on $\partial_t^k\s^L$ and $\partial_t^k\s^L_\nabla$, for $k$ large enough, were established in \cite{BarHM17pA} and used in \cite{BarHM17pB}.) 

However, the theory of variable coefficient higher order operators builds on the extensive and well developed theory of second order operators (that is, the case $2m=2$) and the reasonably well developed theory of constant coefficient higher order operators. 

In the special case of constant coefficient operators (in particular, in the theory of harmonic functions) in Lipschitz domains, boundedness of layer potentials follows from boundedness of the Cauchy integral on Lipschitz curves; the Cauchy integral was famously bounded by Coifmann, McIntosh and Meyer in \cite{CoiMM82}. 
Layer potentials for the Laplacian $-\Delta$ were used in \cite{FabJR78,Ver84,DahK87,PipV92,FabMM98,Zan00}, for the biharmonic operator $\Delta^2$ in \cite{CohG83,CohG85,Ver05,MitM13B}, and for general higher order constant coefficient equations in \cite{Agm57,MitM13A}.

In the case of second order operators with variable $t$-independent coefficients, bounds on layer potentials were established in \cite{KenR09,Rul07,Bar13} in two dimensions for real (or almost real) coefficients. 

Turning to higher dimensions, in \cite{AlfAAHK11} the $p=2$ cases of the bounds~\eqref{eqn:S:N:intro} and \eqref{eqn:S:lusin:intro}  were established for operators of order $2m=2$ with real symmetric $t$-independent coefficients, and a stability result under $L^\infty$ perturbation was established. (The authors also established numerous more specialized  bounds on layer potentials.)
In \cite{Ros13}, R\`osen showed that layer potentials coincide with certain operators appearing in the theory of semigroups investigated in \cite{AusAH08,AusAM10A,AusA11}. In particular, numerous bounds in the $p=2$ case follow. 

The theory of boundary value problems and layer potentials for second order operators was subsequently investigated extensively in the case where $L=-\Div \mat A\nabla$ and $L^*=-\Div \mat A^* \nabla $ satisfy the De Giorgi-Nash-Moser condition; this condition is always satisfied if the ambient dimension $\dmn$ satisfies $\dmn=2$, if $\dmn=3$ and $\mat A$ is $t$-independent, or if $2m=2$ and $\mat A$ is real valued. In these cases, it is often possible to establish at least some bounds on layer potentials using the theory of Calder\'on-Zygmund operators with kernels that satisfy Littlewood-Paley  estimates. See, for example, \cite[Section~4]{KenR09} or \cite[Section~8]{AlfAAHK11}.

In particular, the $p=2=2m$ case of the bounds \eqref{eqn:S:lusin:intro} and \eqref{eqn:S:N:intro} on the single layer potential $\s^L$ were established in \cite{GraH16} directly using $Tb$ theorems, without recourse to the theory of semigroups used in \cite{Ros13}. Building on this bound, the $2m=2$ case of all eight of the bounds \eqref{eqn:S:N:intro}--\eqref{eqn:D:N:rough:intro} and \eqref{eqn:S:lusin:intro}--\eqref{eqn:D:lusin:rough:intro} may be found in \cite{AusM14,HofMitMor15,HofMayMou15,HofKMP15B} for a fairly broad range of~$p$.

Finally, returning to the theory of semigroups, if $2m=2$ then these eight bounds were established in \cite[Theorem~12.7]{AusS16} without assuming  the De Giorgi-Nash-Moser condition, that is, using only boundedness, ellipticity and $t$-independence of the coefficients.

\subsection{Outline}

The organization of this paper is as follows.

In Section~\ref{sec:dfn} we will define our terminology. In Section~\ref{sec:preliminaries} we will recall some known estimates on solutions that we will use extensively throughout the paper, and will prove a few lemmas involving the nontangential and area integral estimates of a general solution $u$ to $Lu=0$. In particular, given the known area integral estimates \eqref{eqn:D:lusin:2}--\eqref{eqn:S:lusin:rough:-} and the nontangential estimates of Theorem~\ref{thm:potentials}, most of the work involved in establishing the area integral estimates of Theorem~\ref{thm:lusin:intro} is contained in Lemma~\ref{lem:lusin:+}.

Section~\ref{sec:S} will be devoted to the nontangential bounds \eqref{eqn:S:N:intro} and~\eqref{eqn:S:N:rough:intro} on the single layer potential (and modified single layer potential). 
Section~\ref{sec:D} will mainly be concerned with establishing the nontangential estimate~\eqref{eqn:D:N:intro} on the double layer potential; the bound \eqref{eqn:D:N:rough:intro} (and the area integral bounds \eqref{eqn:D:lusin:intro} and~\eqref{eqn:D:lusin:rough:intro}) follow fairly quickly once this bound is established. We remark that we will establish area integral bounds \eqref{eqn:S:lusin:intro} and~\eqref{eqn:S:lusin:rough:intro} on the single layer potential in Section~\ref{sec:S} using the nontangential bounds\eqref{eqn:S:N:intro} and~\eqref{eqn:S:N:rough:intro}, and will use these nontangential bounds in order to establish preliminary estimates on the double layer potential.

\subsection*{Acknowledgements}
We would like to thank 
the American Institute of Mathematics for hosting the SQuaRE workshop on ``Singular integral operators and solvability of boundary problems for elliptic equations with rough coefficients,'' 
the Mathematical Sciences Research Institute for hosting a Program on Harmonic Analysis, 
the Instituto de Ciencias Matem\'aticas for hosting a Research Term on ``Real Harmonic Analysis and Its Applications to Partial Differential Equations and Geometric Measure Theory'', 
and 
the IAS/Park City Mathematics Institute for hosting a Summer Session with a research topic of Harmonic Analysis,
at which many of the results and techniques of this paper were discussed.

\section{Definitions}\label{sec:dfn}

In this section, we will provide precise definitions of the notation and concepts used throughout this paper. 

We will work with elliptic operators~$L$ of order~$2m$ in the divergence form \eqref{eqn:divergence} acting on functions defined on~$\R^\dmn$.

As usual, we let $B(X,r)$ denote the ball in $\R^n$ of radius $r$ and center $X$. We let $\R^\dmn_+$ and $\R^\dmn_-$ denote the upper and lower half-spaces $\R^n\times (0,\infty)$ and $\R^n\times(-\infty,0)$; we will identify $\R^n$ with $\partial\R^\dmn_\pm$.
If $Q\subset\R^n$ or $Q\subset\R^\dmn$ is a cube, we will let $\ell(Q)$ be its side-length, and we let $cQ$ be the concentric cube of side-length $c\ell(Q)$. If $E$ is a set of finite measure, we let 
\begin{equation*}\fint_E f(x)\,dx=\frac{1}{\abs{E}}\int_E f(x)\,dx.\end{equation*}
If $f\in L^1_{loc}(\R^n)$, then the Hardy-Littlewood maximal function $\mathcal{M}f$ is given by
\begin{equation*}\mathcal{M}f(x)=\sup_{Q\owns x} \fint_Q \abs{f}\end{equation*}
where the supremum is over all cubes $Q\subset\R^n$ with $x\in Q$.

If $E$ is a measurable set, we will let $\1_E$ denote the characteristic function of~$E$; that is, $\1_E(x)=1$ if $x\in E$ and $\1_E(x)=0$ if $x\notin E$. We will use $\1_\pm$ as a shorthand for $\1_{\R^\dmn_\pm}$.

\subsection{Multiindices and arrays of functions}

We will routinely work with multiindices in~$(\N_0)^\dmn$. (We will occasionally work with multiindices in $(\N_0)^\dmnMinusOne$.) Here $\N_0$ denotes the nonnegative integers. If $\zeta=(\zeta_1,\zeta_2,\dots,\zeta_\dmn)$ is a multiindex, then we define $\abs{\zeta}$ and $\partial^\zeta$ in the usual ways, as $\abs{\zeta}=\zeta_1+\zeta_2+\dots+\zeta_\dmn$ and $\partial^\zeta=\partial_{x_1}^{\zeta_1}\partial_{x_2}^{\zeta_2} \cdots\partial_{x_\dmn}^{\zeta_\dmn}$. 

We will routinely deal with arrays $\arr F=\begin{pmatrix}F_{\zeta}\end{pmatrix}$ of numbers or functions indexed by multiindices~$\zeta$ with $\abs{\zeta}=k$ for some~$k\geq 0$.
In particular, if $\varphi$ is a function with weak derivatives of order up to~$k$, then we view $\nabla^k\varphi$ as such an array.

The inner product of two such arrays of numbers $\arr F$ and $\arr G$ is given by
\begin{equation*}\bigl\langle \arr F,\arr G\bigr\rangle =
\sum_{\abs{\zeta}=k}
\overline{F_{\zeta}}\, G_{\zeta}.\end{equation*}
If $\arr F$ and $\arr G$ are two arrays of functions defined in a set $\Omega$ in Euclidean space, then the inner product of $\arr F$ and $\arr G$ is given by
\begin{equation*}\bigl\langle \arr F,\arr G\bigr\rangle_\Omega =
\int_\Omega \langle \arr F(X),\arr G(X)\rangle\,dX =
\sum_{\abs{\zeta}=k}
\int_{\Omega} \overline{F_{\zeta}(X)}\, G_{\zeta}(X)\,dX.\end{equation*}

We let $\vec e_j$ be the unit vector in $\R^\dmn$ in the $j$th direction; notice that $\vec e_j$ is a multiindex with $\abs{\vec e_j}=1$. We let $\arr e_{\zeta}$ be the ``unit array'' corresponding to the multiindex~$\zeta$; thus, $\langle \arr e_{\zeta},\arr F\rangle = F_{\zeta}$.

We will let $\nabla_\pureH$ denote either the gradient in~$\R^n$, or the $n$ horizontal components of the full gradient~$\nabla$ in $\R^\dmn$. (Because we identify $\R^n$ with $\partial\R^\dmn_\pm\subset\R^\dmn$, the two uses are equivalent.) If $\zeta$ is a multiindex with $\zeta_\dmn=0$, we will occasionally use the terminology $\partial_\pureH^\zeta$ to emphasize that the derivatives are taken purely in the horizontal directions.

\subsection{Elliptic differential operators}

Let $\mat A = \begin{pmatrix} A_{\alpha\beta} \end{pmatrix}$ be a matrix of measurable coefficients defined on $\R^\dmn$, indexed by multtiindices $\alpha$, $\beta$ with $\abs{\alpha}=\abs{\beta}=m$. If $\arr F$ is an array indexed by multiindices of length~$m$, then $\mat A\arr F$ is the array given by
\begin{equation*}(\mat A\arr F)_{\alpha} =
\sum_{\abs{\beta}=m}
A_{\alpha\beta} F_{\beta}.\end{equation*}

We let $L$ be the $2m$th-order divergence form operator associated with~$\mat A$. That is, we say that 
\begin{equation}
\label{eqn:weak}
Lu=0 \text{ in }\Omega \text{ in the weak sense if } \langle \nabla^m \varphi, \mat A\nabla^m u\rangle_\Omega=0 \text{ for all } \varphi\in C^\infty_0(\Omega).
\end{equation}

Throughout we consider coefficients that satisfy the bound
\begin{align}
\label{eqn:elliptic:bounded}
\doublebar{\mat A}_{L^\infty(\R^\dmn)}
&\leq
\Lambda
\end{align}
and the 
G\r{a}rding inequality
\begin{align}
\label{eqn:elliptic}
\re {\bigl\langle\nabla^m \varphi,\mat A\nabla^m \varphi\bigr\rangle_{\R^\dmn}}
&\geq
\lambda\doublebar{\nabla^m\varphi}_{L^2(\R^\dmn)}^2
\quad\text{for all $\varphi\in\dot W^2_m(\R^\dmn)$}
\end{align}
for some $\Lambda>\lambda>0$. (The stronger G\r{a}rding inequality~\eqref{eqn:elliptic:slices} will not be used in the proof of Theorem~\ref{thm:potentials} or~\ref{thm:lusin:intro}; it was used only in the statement and proof of Theorem~\ref{thm:Neumann:2:N}.)

The numbers $C$ and $\varepsilon$ denote constants whose value may change from line to line, but which are always positive and depend only on the dimension~$\dmn$, the order $2m$ of any relevant operators, and the numbers $\lambda$ and $\Lambda$ in the bounds~\eqref{eqn:elliptic} (or~\eqref{eqn:elliptic:slices}) and~\eqref{eqn:elliptic:bounded}.

\subsection{Function spaces and boundary data}

Let $\Omega\subseteq\R^n$ or $\Omega\subseteq\R^\dmn$ be a measurable set in Euclidean space. We let $C^\infty_0(\Omega)$ be the space of all smooth functions that are compactly supported in~$\Omega$. We let $L^p(\Omega)$ denote the usual Lebesgue space with respect to Lebesgue measure with norm given by
\begin{equation*}\doublebar{f}_{L^p(\Omega)}=\biggl(\int_\Omega \abs{f(x)}^p\,dx\biggr)^{1/p}.\end{equation*}

If $\Omega$ is a connected open set, then we let the homogeneous Sobolev space $\dot W^p_k(\Omega)$ be the space of equivalence classes of functions $u$ that are locally integrable in~$\Omega$ and have weak derivatives in $\Omega$ of order up to~$k$ in the distributional sense, and whose $k$th gradient $\nabla^k u$ lies in $L^p(\Omega)$. Two functions are equivalent if their difference is a polynomial of order~$k-1$.
We impose the norm 
\begin{equation*}\doublebar{u}_{\dot W^p_k(\Omega)}=\doublebar{\nabla^k u}_{L^p(\Omega)}.\end{equation*}
Then $u$ is equal to a polynomial of order $k-1$ (and thus equivalent to zero) if and only if its $\dot W^p_k(\Omega)$-norm is zero. 
We say that $u\in L^p_{loc}(\Omega)$ or $u\in\dot W^p_{k,loc}(\Omega)$ if $u\in L^p(U)$ or $u\in\dot W^p_{k}(U)$ for any bounded open set $U$ with $\overline U\subset\Omega$.

We will need a number of more specialized norms on functions.
The modified nontangential operator $\widetilde N_+$ was introduced in \cite{KenP93} and (in the half space) is given by
\begin{equation}
\label{dfn:NTM:modified}
\widetilde N_+ H(x) = \sup
\biggl\{\biggl(\fint_{B((y,s),{s}/2)} \abs{H}^2\biggr)^{1/2}:
s>0,\>y\in\R^n,\>
\abs{x-y}< s
\biggr\}
.\end{equation}
We will also use a two-sided nontangential maximal function, which we define as
\begin{equation}
\label{dfn:NTM:twosides}
\widetilde N_* H(x) = \sup
\biggl\{\biggl(\fint_{B((y,s),\abs{s}/2)} \abs{H}^2\biggr)^{1/2}:
s\in\R,\>y\in\R^n,\>
\abs{x-y}< \abs{s}
\biggr\}
.\end{equation}
Finally, we will use the Lusin area integral operator $\mathcal{A}_2^+$ given by
\begin{align}\label{eqn:lusin}
\mathcal{A}_2^+ H(x) &= \biggl( \int_0^\infty \int_{\abs{x-y}<t} \abs{H(y, t)}^2\frac{dy\,dt}{t^\dmn}\biggr)^{1/2}
.\end{align}

\subsubsection{Boundary values and boundary function spaces}

Following \cite{BarHM17pB}, we define the boundary values $\Trace^\pm u$ of a function $u$ defined in $\R^\dmn_\pm$ by 
\begin{equation}
\Trace^\pm  u
=\arr f \quad\text{if}\quad
\lim_{t\to 0^\pm} \doublebar{\partial^\gamma u(\,\cdot\,,t)-f}_{L^1(K)}=0
\end{equation}
for all compact sets $K\subset\R^n$. We define
\begin{equation}
\label{eqn:Dirichlet}
\Tr_j^\pm u= \Trace^\pm \nabla^j u.\end{equation}
We remark that if $\nabla u$ is locally integrable up to the boundary, then $\Trace^\pm u$ exists, and furthermore $\Trace^\pm u$ coincides with the traditional trace in the sense of Sobolev spaces.

We are interested in functions with boundary data in Lebesgue or Sobolev spaces. However, observe that if $j\geq 1$, then the components of $\Tr_j^\pm u$ are derivatives of a common function and so must satisfy certain compatibility conditions. We thus define the following Whitney-Lebesgue, Whitney-Sobolev and Whitney-Besov spaces of arrays that satisfy these conditions.

\begin{defn} \label{dfn:Whitney}
Let 
\begin{equation*}\mathfrak{D}=\{\Tr_{m-1}\varphi:\varphi\text{ smooth and compactly supported in $\R^\dmn$}\}.\end{equation*}

If $1\leq p<\infty$, then we let $\dot W\!A^p_{m-1,0}(\R^n)$ be the completion of the set $\mathfrak{D}$ under the $L^p$ norm. 
We let $\dot W\!A^p_{m-1,1}(\R^n)$ be the completion of $\mathfrak{D}$ under the $\dot W^p_1(\R^n)$ norm, that is, under the norm $\doublebar{\arr f}_{\dot W\!A^p_{m-1,1}(\R^n)}=\doublebar{\nabla_\pureH \arr f}_{L^p(\R^n)}$. 

Finally, we let $\dot W\!A^2_{m-1,1/2}(\R^n)$ be the completion of $\mathfrak{D}$ under the norm in the Besov space $\dot B^{2,2}_{1/2}(\R^n)$; this norm may be written as
\begin{equation*}\doublebar{\arr f}_{\dot B^{2,2}_{1/2}(\R^n)} = \doublebar{\arr f}_{\dot W\!A^2_{m-1,1/2}(\R^n)} = \biggl(\sum_{\abs\gamma=m-1} \int_{\R^n}\abs{\widehat {f_\gamma}(\xi)}^2\abs{\xi}\,d\xi\biggr)^{1/2}.\end{equation*}
\end{defn}

It is widely known that $\arr f\in \dot W\!A^2_{m-1,1/2}(\R^n)$ if and only if $\arr f=\Tr_{m-1}^+ F$ for some $F$ with $\nabla^m F\in L^2(\R^\dmn_+)$.

Recall that Theorem~\ref{thm:Neumann:2:N} is concerned with Neumann boundary values $\M_{\mat A}^+u$ of solutions $u$ to $Lu=0$. However, as discussed at the beginning of Section~\ref{sec:potentials:introduction}, Theorem~\ref{thm:Neumann:2:N} follows from Theorem~\ref{thm:potentials} and the proof of \cite[Theorems 1.7 and~1.11]{BarHM18}, and thus we will not use any particular properties of $\M_{\mat A}^+u$ in the proof; in this case we refer the reader to \cite[Section~2.3.2]{BarHM18} for a definition of $\M_{\mat A}^+$.

In the proof of Lemma~\ref{lem:boundary:D} below we will use some properties of $\M_{\mat A}^-$ from \cite{Bar17} and \cite{BarHM17pB}. In these cases we refer the reader to \cite{BarHM17pB} for a definition of $\M_{\mat A}^\pm u$; we remark only that if $u\in \dot W^2_m(\R^\dmn_\pm)$ and $Lu=0$ in $\R^\dmn_\pm$, then the definitions of $\M_{\mat A}^\pm u$ used in \cite{Bar17} and in \cite{BarHM17pB} coincide.

\subsection{The double layer potential and the Newton potential}

In this section we define the double layer potential mentioned in Theorem~\ref{thm:potentials}.

We begin by defining the related Newton potential.
For any $\arr H\in L^2(\R^\dmn)$, by the Lax-Milgram lemma there is a unique function $\Pi^L\arr H$ in $\dot W^2_m(\R^\dmn)$ that satisfies
\begin{equation}\label{eqn:newton}
\langle \nabla^m\varphi, \mat A\nabla^m \Pi^L\arr H\rangle_{\R^\dmn}=\langle \nabla^m\varphi, \arr H\rangle_{\R^\dmn}\end{equation}
for all $\varphi\in \dot W^2_m(\R^\dmn)$. We refer to the operator $\Pi^L$ as the Newton potential.

Now, suppose that $\arr f\in \dot W\!A^2_{m-1,1/2}(\R^n)$. Recall that $\arr f=\Tr_{m-1}^+ F$ for some $F\in \dot W^2_m(\R^\dmn_+)$.
We define
\begin{equation}
\label{dfn:D:newton:+}
\D^{\mat A}\arr f = -\1_+ F + \Pi^L(\1_+ \mat A\nabla^m F)
.\end{equation}
This operator is well-defined, that is, does not depend on the choice of~$F$. See \cite[Lemma~4.2]{Bar17} or \cite[section~2.4]{BarHM17}. Furthermore, it is antisymmetric about exchange of $\R^\dmn_+$ and $\R^\dmn_-$; that is, if $\Tr_{m-1}^-F=\arr f$ and $F\in \dot W^2_m(\R^\dmn_-)$, then
\begin{equation}
\label{dfn:D:newton:-}
\D^{\mat A}\arr f = 
-\Pi^L(\1_-\mat A\nabla^m F) + \1_- F
.\end{equation}
See \cite[formula~(2.27)]{BarHM17} or \cite[formula~(4.8)]{Bar17}. 

\subsection{The single layer potential}
\label{sec:dfn:S}

Let $\arr g$ be a bounded linear operator on the space $\dot W\!A^2_{m-1,1/2}(\R^n)$. Observe that $\arr g$ extends to an operator 
on $\dot B^{2,2}_{1/2}(\R^n)$, and so $\arr g\in{(\dot W\!A^2_{m-1,1/2}(\R^n))^*}$ if and only if there is a representative of $\arr g$ that lies in $\dot B^{2,2}_{-1/2}(\R^n)$, that is, that satisfies
\begin{equation*}
\biggl(\sum_{\abs\gamma=m-1} \int_{\R^n} \frac{1}{\abs\xi}\abs{\widehat{g_\gamma}(\xi)}^2\,d\xi\biggr)^{1/2} =\doublebar{\arr g}_{\dot B^{2,2}_{-1/2}(\R^n)}<\infty.\end{equation*}
The operator $F\mapsto \langle \Tr_{m-1} F,\arr g\rangle_{\R^n}$ is a bounded linear operator on $\dot W^2_m(\R^\dmn)$, and so by the Lax-Milgram lemma there is a unique function $\s^L\arr g\in \dot W^2_m(\R^\dmn)$ that satisfies
\begin{align}
\label{dfn:S}
\langle \nabla^m\varphi, \mat A\nabla^m\s^L\arr g\rangle_{\R^\dmn}&=\langle \Tr_{m-1}\varphi,\arr g\rangle_{\R^n}
&&\text{for all }\varphi\in \dot W^2_m(\R^\dmn)
.\end{align}
See \cite{Bar17}. We remark that this definition coincides with the definition of $\s^L\arr g$ given in \cite{BarHM17,BarHM17pA}. This defines $\s^L$ as a bounded operator $\dot B^{2,2}_{-1/2}(\R^n)\mapsto \dot W^2_m(\R^\dmn)$; by \cite[formula~(4.3)]{BarHM18} (see Section~\ref{sec:S:L2} below) we have that $\s^L$ extends by density to an operator that is bounded $L^2(\R^n)\mapsto \dot W^2_m(\R^n\times(a,b))$ for any numbers $-\infty<a<b<\infty$.

As observed in \cite[formula~(2.23)]{BarHM17pA}, if $\arr g\in \dot B^{2,2}_{-1/2}(\R^n)$ and if $\abs\alpha=m$, then for almost every $(x,t)\in\R^\dmn$ we have that
\begin{align}
\label{eqn:S:fundamental}
\partial^\alpha\s^L\arr g(x,t)
&=
\sum_{\abs{\zeta}=m-1}
\int_{\R^n} \partial_{x,t}^\alpha \partial_{y,s}^\zeta E^L(x,t,y,0)\,g_{\zeta}(y) \,dy
\end{align}
where $E^L$ is the fundamental solution for the operator $L$ constructed in~\cite{Bar16}. 
By the bound \cite[formula~(63)]{Bar16} (reproduced as formula~\eqref{eqn:fundamental:far} below), for almost every $(x,t)\in\R^\dmn_\pm$ we have that $\partial_{x,t}^\alpha E^L(x,t,\,\cdot\,,\,\cdot\,)\in \dot W^2_m(\R^\dmn_\mp)$ and so $v(y)=\partial_{x,t}^\alpha \partial_{y,s}^\zeta E^L(x,t,y,0)$ lies in $\dot B^{2,2}_{1/2}(\R^n)$. Thus, the right hand side converges provided $g_{\zeta}\in \dot B^{2,2}_{-1/2}(\R^n)$.

As in \cite[formula~(2.27)]{BarHM17pA}, if $\abs\gamma=m-1$, then we define 
\begin{align}
\label{eqn:S:variant}
\partial^\gamma \s^L_\nabla\arr h(x,t)
&=\sum_{\abs\beta=m}\int_{\R^n} \partial_{x,t}^\gamma \partial_{y,s}^\beta E^L(x,t,y,0)\,h_\beta(y)\,dy
.\end{align}
We will see (Lemma~\ref{lem:S:variant} below) that if $\arr h\in L^2(\R^n)$, then the integral converges absolutely for almost every $(x,t)\in\R^\dmn$, and the functions $\partial^\gamma \s^L_\nabla\arr h$ given by formula~\eqref{eqn:S:variant} are indeed derivatives of a common $\dot W^2_{m-1,loc}(\R^n)$-function that we may call $\s^L_\nabla\arr h$.

\section{Preliminaries}
\label{sec:preliminaries}

In Section~\ref{sec:S}, we will establish the bounds \eqref{eqn:S:N:intro}, \eqref{eqn:S:N:rough:intro}, \eqref{eqn:S:lusin:intro}, and~\eqref{eqn:S:lusin:rough:intro} on the single layer potential.
In Section~\ref{sec:D}, we will establish the bounds \eqref{eqn:D:N:intro}, \eqref{eqn:D:N:rough:intro}, \eqref{eqn:D:lusin:intro}, and~\eqref{eqn:D:lusin:rough:intro} on the double layer potential. In this section, we will collect some known results and establish some preliminary estimates that will be of use in both Section~\ref{sec:S} and Section~\ref{sec:D}.

\subsection{Regularity results}

We begin by recalling some known regularity results for solutions to elliptic differential equations.

The following lemma is the higher order analogue of the Caccioppoli inequality. It was proven in full generality in \cite{Bar16} and some important preliminary versions were established in \cite{Cam80,AusQ00}.
\begin{lem}[The Caccioppoli inequality]\label{lem:Caccioppoli}
Let $L$ be an operator of the form~\eqref{eqn:divergence} of order~$2m$ associated to coefficients~$\mat A$ that satisfy the bounds \eqref{eqn:elliptic:bounded} and~\eqref{eqn:elliptic}. Let $ u\in \dot W^2_m(B(X_0,2r))$ with $L u=0$ in $B(X_0,2r)$.

Then we have the bound
\begin{equation*}
\fint_{B(X,r)} \abs{\nabla^j  u(x,s)}^2\,dx\,ds
\leq \frac{C}{r^2}\fint_{B(X,2r)} \abs{\nabla^{j-1}  u(x,s)}^2\,dx\,ds
\end{equation*}
for any $j$ with $1\leq j\leq m$.
\end{lem}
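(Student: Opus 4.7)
The plan is to prove the top-order case $j=m$ by the classical Caccioppoli test-function argument, and then descend to $1 \leq j \leq m-1$ by an integration-by-parts interpolation coupled with a standard iteration lemma.

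For $j=m$, choose a cutoff $\eta \in C^\infty_0(B(X,2r))$ with $\eta \equiv 1$ on $B(X,r)$ and $\abs{\nabla^k\eta}\leq Cr^{-k}$, together with a polynomial $P$ of degree at most $m-1$ such that $\partial^\gamma(u-P)$ has vanishing average on $B(X,2r)$ for every $\abs\gamma\leq m-1$. Then $\varphi=\eta^{2m}(u-P)$ extends by zero to a member of $\dot W^2_m(\R^\dmn)$ compactly supported in $B(X,2r)$, so \eqref{eqn:weak} gives $\langle\nabla^m\varphi,\mat A\nabla^m u\rangle=0$. Expanding $\nabla^m\varphi$ by Leibniz isolates the main term $\eta^{2m}\nabla^m u$ from commutator terms of the schematic form $(\nabla^k\eta^{2m})\cdot\nabla^{m-k}(u-P)$ with $1\leq k\leq m$. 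Applying the G\aa rding inequality \eqref{eqn:elliptic} to $\eta^m(u-P)$, then Young's inequality to absorb $\abs{\nabla^m u}^2$ terms to the left and iterated Poincar\'e inequalities (exploiting the vanishing moments of $u-P$) to bound $\doublebar{\nabla^{m-k}(u-P)}_{L^2(B(X,2r))}\leq Cr^{k-1}\doublebar{\nabla^{m-1}u}_{L^2(B(X,2r))}$, yields $\int_{B(X,r)}\abs{\nabla^m u}^2 \leq Cr^{-2}\int_{B(X,2r)}\abs{\nabla^{m-1}u}^2$, which is the $j=m$ case after dividing by $\abs{B(X,r)}$.

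For $1\leq j\leq m-1$ I argue by descending induction on $j$. Fix concentric balls $B(X,\rho)\subset B(X,\rho')\subset B(X,2r)$ with $r\leq\rho<\rho'\leq 2r$ and a cutoff $\eta$ supported in $B(X,\rho')$, identically $1$ on $B(X,\rho)$, satisfying $\abs{\nabla\eta}\leq C(\rho'-\rho)^{-1}$. For each multiindex $\gamma$ with $\abs\gamma=j$, writing $\partial^\gamma u=\partial_i\partial^{\gamma-\vec e_i}u$ and integrating by parts once moves a derivative from one factor to the other, so that $\partial^\gamma u\cdot\partial^\gamma u$ becomes $\partial^{\gamma+\vec e_i}u\cdot\partial^{\gamma-\vec e_i}u$ up to a cutoff commutator. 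Young's inequality absorbs the cross term $\abs{\nabla\eta}\abs{\nabla^ju}\abs{\nabla^{j-1}u}$, leaving
\begin{equation*}
\int\eta^2\abs{\nabla^j u}^2 \leq C\int_{B(X,\rho')}\abs{\nabla^{j-1}u}\abs{\nabla^{j+1}u} + \frac{C}{(\rho'-\rho)^2}\int_{B(X,\rho')}\abs{\nabla^{j-1}u}^2.
\end{equation*}
Setting $\tilde\rho=(\rho'+2r)/2$ and applying the inductive hypothesis on the pair $B(X,\rho')\subset B(X,\tilde\rho)$ controls $\fint_{B(X,\rho')}\abs{\nabla^{j+1}u}^2$ by $C(\tilde\rho-\rho')^{-2}\fint_{B(X,\tilde\rho)}\abs{\nabla^j u}^2$. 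A second Young's inequality with a small parameter then produces
\begin{equation*}
\int_{B(X,\rho)}\abs{\nabla^j u}^2 \leq \theta\int_{B(X,\tilde\rho)}\abs{\nabla^j u}^2 + \frac{C}{(\rho'-\rho)^2}\int_{B(X,2r)}\abs{\nabla^{j-1}u}^2
\end{equation*}
for some fixed $\theta\in(0,1)$. A standard real-analytic iteration lemma applied to $\Phi(\rho):=\int_{B(X,\rho)}\abs{\nabla^j u}^2$ on $[r,2r]$ converts this self-improving bound into $\int_{B(X,r)}\abs{\nabla^j u}^2\leq Cr^{-2}\int_{B(X,2r)}\abs{\nabla^{j-1}u}^2$, closing the induction.

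The main obstacle is the intermediate case $j<m$: because $\nabla^{j-1}u$ solves no equation of its own, the test-function method used at the top order is unavailable, and one must mediate between $\nabla^{j-1}u$ and $\nabla^{j+1}u$ via the interpolation identity. The iteration lemma is then essential for trading a $\theta$-factor on a slightly larger ball for the clean $r^{-2}$-loss between $B(X,r)$ and $B(X,2r)$.
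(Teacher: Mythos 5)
The paper does not give its own proof of Lemma~\ref{lem:Caccioppoli}; it cites \cite{Bar16} for the general case (with preliminary versions in \cite{Cam80,AusQ00}), so there is no in-paper argument to compare against. Your sketch constitutes a valid self-contained proof, and the top-order step is in the spirit of the cited references.

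Two places deserve a warning because the sketch compresses the real crux. At order $j=m$, since \eqref{eqn:elliptic} is a \emph{global} G\r{a}rding inequality rather than a pointwise accretivity condition, the commutator pairing $\langle\text{commutators},\mat A\nabla^m u\rangle$ produced by the weak equation cannot be dismissed as obviously lower order: the factor $\nabla^m u$ appears there without any cutoff. The argument closes only because each Leibniz commutator from $\nabla^m\bigl(\eta^{2m}(u-P)\bigr)$ carries at least a factor $\eta^m$ (since $\nabla^k(\eta^{2m})$ retains $\eta^{2m-k}\geq\eta^m$ for $1\leq k\leq m$); one then rewrites $\eta^m\nabla^m u=\nabla^m(\eta^m(u-P))-R_1$ and absorbs the resulting terms into the G\r{a}rding side via Young. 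This is exactly the point that distinguishes the G\r{a}rding-only setting from the pointwise-elliptic one and should be made explicit. At the intermediate orders, the descending-induction/integration-by-parts/iteration route is sound, but note (i) you invoke the level-$(j+1)$ Caccioppoli between $B(X,\rho')$ and $B(X,\tilde\rho)$ with $\tilde\rho\neq 2\rho'$, which requires either re-running the level-$(j+1)$ argument for arbitrary concentric balls or a short covering step; and (ii) to feed the standard iteration lemma you want $\rho'$ and $\tilde\rho$ chosen as fixed fractions of the interval $[\rho,s]$ (say $\rho'=(2\rho+s)/3$, $\tilde\rho=(\rho+2s)/3$) rather than anchored to the outer radius $2r$ as in your $\tilde\rho=(\rho'+2r)/2$, so that the recursive hypothesis $\Phi(\rho)\leq\theta\Phi(s)+A/(s-\rho)^2$ holds for all $r\leq\rho<s\leq 2r$. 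Both are expository rather than conceptual; the proof is correct.
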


We next state the higher order generalization of Meyers's reverse H\"older inequality for gradients. The $k=0$ case of the bound \eqref{eqn:Meyers:lower} was established in \cite{Cam80,AusQ00}. The $k\geq 1$ case was established in \cite{Bar16} and is a relatively straightforward consequence of the $k=0$ case and the Gagliardo-Nirenberg-Sobolev inequality.

\begin{thm}
\label{thm:Meyers} 
Let $L$ be an operator of the form~\eqref{eqn:divergence} of order~$2m$ associated to coefficients~$\mat A$ that satisfy the bounds \eqref{eqn:elliptic:bounded} and~\eqref{eqn:elliptic}. 
Let $X_0\in\R^\dmn$ and let $r>0$. Suppose that $ u\in \dot W^2_m(B(X_0,2r))$ with $L  u=0$ in $B(X_0,2r)$. 

If $k$ is an integer with $0\leq k\leq m$ and $2k<\dmn$, then there is a number $p_k^+=p_{L,k}^+>2\pdmn/(\dmn-2k)$, depending only on the standard constants, such that if $0<p<q<p_k^+$, then
\begin{align}\label{eqn:Meyers:lower}
\biggl(\int_{B(X_0,r)
}\abs{\nabla^{m-k}   u}^{q}\biggr)^{1/{q}}
&\leq 
	\frac{C_{p,q}}{r^{\pdmn/p-\pdmn/{q}}}
	\biggl(\int_{B(X_0,2r)}\abs{\nabla^{m-k}   u}^{p}\biggr)^{1/p} 
\end{align}
for some constant $C_{p,q}$ depending only on $p$, $q$ and the standard constants.

If $0\leq m-k \leq m-\pdmn/2$, then $\nabla^{m-k}  u$ is H\"older continuous and satisfies the bound
\begin{align}\label{eqn:Meyers:lowest}
\sup_{B(X_0,r)} \abs{\nabla^{m-k}   u} 
&\leq 
\frac{C_p}{r^{\pdmn/p}}
\biggl(\int_{B(X_0,2r)}\abs{\nabla^{m-k}   u}^p\biggr)^{1/p} 
\end{align}
for all $0<p\leq \infty$.
\end{thm}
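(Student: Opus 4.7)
The strategy is exactly what the authors indicate: take the $k=0$ case as input and bootstrap to $k \geq 1$ via the Gagliardo-Nirenberg-Sobolev embedding. The $k=0$ case is the classical Meyers-type reverse H\"older inequality for the top-order gradient $\nabla^m u$, proved via a Gehring-type self-improvement of the higher-order Caccioppoli inequality in \cite{Cam80,AusQ00}; I take this as established and let $p_0^+>2$ be the corresponding exponent. The exponent $p_k^+$ is then defined by $1/p_k^+ = 1/p_0^+ - k/(n+1)$, and $p_0^+>2$ forces $p_k^+ > 2(n+1)/(n+1-2k)$.

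For the reverse H\"older estimate \eqref{eqn:Meyers:lower} with $k\geq 1$, fix $p < q < p_k^+$ and set $q_*$ by $1/q_* = 1/q + k/(n+1)$, so $q_* < p_0^+$. Iterated Poincar\'e-Sobolev in dimension $n+1$ produces a polynomial $Q$ of degree less than $k$ with
\begin{equation*}
\biggl(\int_{B(X_0,r)} |\nabla^{m-k} u - Q|^q\biggr)^{1/q}
\leq C \biggl(\int_{B(X_0,r)} |\nabla^m u|^{q_*}\biggr)^{1/q_*},
\end{equation*}
the $r$-factors cancelling since $(n+1)/q_* - (n+1)/q = k$. Applying the $k=0$ case with exponents $(2,q_*)$ to bound the right side by $Cr^{(n+1)/q_*-(n+1)/2}\|\nabla^m u\|_{L^2(B(X_0,2r))}$, and then iterating the Caccioppoli inequality (Lemma~\ref{lem:Caccioppoli}) $k$ times to replace $\nabla^m u$ by $r^{-k}\nabla^{m-k}u$ on a slightly enlarged ball, gives the desired bound with $p=2$ up to the polynomial $Q$. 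To absorb $Q$, I would take it to be the $L^2$-projection of $\nabla^{m-k} u$ onto the polynomials of degree less than $k$ on $B(X_0,r)$; norm equivalence on this finite-dimensional space then controls the leftover term by $\|\nabla^{m-k}u\|_{L^2(B(X_0,r))}$. For $p<2$ the result follows by Gehring-Giaquinta-Modica self-improvement of the reverse H\"older inequality just established, and for $p>2$ by H\"older's inequality on the right-hand side.

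The H\"older-continuity bound \eqref{eqn:Meyers:lowest} lies in the complementary regime $k \geq (n+1)/2$, where $kp_0^+ > n+1$, so Morrey's embedding $W^{k,p_0^+}_{\mathrm{loc}} \hookrightarrow C^{0,\alpha}_{\mathrm{loc}}$ applied to $\nabla^{m-k}u$ already yields H\"older continuity. The sup bound follows from the same Morrey-Meyers-Caccioppoli chain: Morrey's inequality contributes $r^{k-(n+1)/p_0^+}$, the $k=0$ Meyers with exponents $(2,p_0^+)$ contributes $r^{(n+1)/p_0^+ - (n+1)/2}$, and Caccioppoli iterated $k$ times contributes $r^{-k}$; these combine to $r^{-(n+1)/2}$, matching the statement with $p=2$, and other $p$ follow as above. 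The main technical obstacle is simply the bookkeeping of $r$-scalings along the Sobolev-Meyers-Caccioppoli chain and the clean treatment of the polynomial remainder from iterated Poincar\'e-Sobolev; both are handled by the $L^2$-projection device and are standard, which is presumably what the authors mean by ``relatively straightforward.''
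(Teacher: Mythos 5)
Your proposal is correct and matches the route the paper itself describes: the theorem is cited from \cite{Bar16}, with the paper explicitly noting that the $k\geq 1$ case "is a relatively straightforward consequence of the $k=0$ case and the Gagliardo--Nirenberg--Sobolev inequality," which is exactly the Sobolev--Meyers--Caccioppoli chain (with the polynomial absorption by projection) that you carry out. The only cosmetic point is that your formula $1/p_k^+ = 1/p_0^+ - k/(n+1)$ should be capped (or $p_0^+$ shrunk) when $p_0^+ \geq (n+1)/k$, since the theorem only requires $p_k^+ > 2(n+1)/(n+1-2k)$ and infinite admissible $q$ is fine; this does not affect the argument.
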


Finally, if $\mat A$ is $t$-independent then solutions to $Lu=0$ have additional regularity. The following lemma was proven in the case $m = 1$ in \cite[Proposition 2.1]{AlfAAHK11} and generalized to the case $m \geq 2$ in \cite[Lemma 3.20]{BarHM17pA}.
\begin{lem}\label{lem:slices}
Let $L$ be an operator of the form~\eqref{eqn:divergence} of order~$2m$ associated to $t$-independent coefficients~$\mat A$ that satisfy the bounds \eqref{eqn:elliptic:bounded} and~\eqref{eqn:elliptic}.
Let $t\in\R$ be a constant, and let $Q\subset\R^n$ be a cube.

If $Lu=0$ in $2Q\times(t-\ell(Q),t+\ell(Q))$, then 
\begin{equation*}\int_Q \abs{\nabla^j \partial_t^k u(x,t)}^p\,dx \leq \frac{C_p}{\ell(Q)} 
\int_{2Q}\int_{t-\ell(Q)}^{t+\ell(Q)} \abs{\nabla^j \partial_s^k u(x,s)}^p\,ds\,dx\end{equation*}
for any $0\leq j\leq m$, any $0< p < p_{m-j}^+$, and any integer $k\geq 0$, where $p^+_{m-j}$ is as in Theorem~\ref{thm:Meyers}.
\end{lem}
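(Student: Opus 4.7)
The plan is to proceed in three stages: (i) reduce to $k=0$ using the $t$-independence of $\mat A$, (ii) prove the case $p=2$ by combining a one-dimensional Sobolev inequality in the $t$-variable with the Caccioppoli inequality, and (iii) extend to the full range $0<p<p_{m-j}^+$ via Meyers's reverse H\"older inequality. Since $\mat A$ is $t$-independent, $L$ commutes with $\partial_t$, so $L(\partial_t^k u)=0$ in $2Q\times(t-\ell(Q),t+\ell(Q))$ whenever $Lu=0$ there; replacing $u$ by $\partial_t^k u$ reduces the claim to $k=0$.

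For the base case $p=2$, the one-dimensional Sobolev embedding on the interval $I=(t-\ell(Q),t+\ell(Q))$ applied to $s\mapsto\nabla^j u(x,s)$ gives
\begin{equation*}
|\nabla^j u(x,t)|^2 \le \frac{C}{\ell(Q)}\int_I|\nabla^j u(x,s)|^2\,ds + C\,\ell(Q)\int_I|\nabla^j\partial_s u(x,s)|^2\,ds.
\end{equation*}
Integrating over $x\in Q$ puts the first term in the desired form. For the second term, when $j<m$ one has $|\nabla^j\partial_s u|\le|\nabla^{j+1}u|$, and the Caccioppoli inequality (Lemma~\ref{lem:Caccioppoli}) applied on the slightly enlarged cylinder $2Q\times(t-2\ell(Q),t+2\ell(Q))$ contributes a factor $\ell(Q)^{-2}$ that absorbs the leading $\ell(Q)$. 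When $j=m$ I invoke $t$-independence once more: $v=\partial_s u$ also solves $Lv=0$, so Caccioppoli applied to $v$ bounds $\nabla^m v=\partial_s\nabla^m u$ by $\ell(Q)^{-1}\nabla^{m-1}v$, and $|\nabla^{m-1}v|=|\nabla^{m-1}\partial_s u|\le|\nabla^m u|$ closes the estimate.

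To extend to arbitrary $0<p<p_{m-j}^+$, I cover $Q$ by subcubes $q_i$ of side $r\sim\ell(Q)$ and apply the $p=2$ case on each~$q_i$. For $p\le 2$, H\"older's inequality bounds the slice $L^p$ integral $\int_{q_i}|\nabla^j u(x,t)|^p\,dx$ by $|q_i|^{1-p/2}\bigl(\int_{q_i}|\nabla^j u(x,t)|^2\,dx\bigr)^{p/2}$; the $p=2$ case converts the slice $L^2$ integral to a bulk $L^2$ integral over a cylinder $2q_i\times(t-r,t+r)\subset B((x_i,t),Cr)$, and Meyers's reverse H\"older inequality (Theorem~\ref{thm:Meyers}) returns it to a bulk $L^p$ integral. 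A direct dimension count shows that all scaling factors balance to $r^{-1}\sim\ell(Q)^{-1}$; summing over~$i$ produces the desired estimate. For $p>2$ a symmetric argument applies, with Meyers now upgrading the bulk $L^2$ integral to an $L^p$ integral and H\"older controlling the slice integrand.

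The main obstacle is the range $p<1$, where the one-dimensional Sobolev and Caccioppoli steps of the base case are unavailable and Jensen-type convexity inequalities go the wrong way. The only tool capable of bridging this gap is Meyers's reverse H\"older inequality, which provides exactly the self-improvement from $L^2$ down to small~$L^p$ needed to complete the covering argument.
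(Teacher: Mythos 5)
Your reduction to $k=0$, the $p=2$ base case via one-dimensional Sobolev embedding and Caccioppoli (including the $j=m$ trick of applying Caccioppoli to $\partial_s u$), and the covering argument for $0<p<2$ are all sound, though the domain management needs a small adjustment: you only know $Lu=0$ on $2Q\times(t-\ell(Q),t+\ell(Q))$, so you cannot apply Caccioppoli on the enlarged cylinder $2Q\times(t-2\ell(Q),t+2\ell(Q))$; instead apply the Sobolev step on a shorter interval such as $(t-\ell(Q)/4,t+\ell(Q)/4)$ and pass outward from there. The paper itself supplies no proof of this lemma (it cites \cite[Proposition~2.1]{AlfAAHK11} for $m=1$ and \cite[Lemma~3.20]{BarHM17pA} for $m\ge 2$), so I am judging your proposal on its own merits.

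The genuine gap is the case $2<p<p_{m-j}^+$, which you dismiss with ``a symmetric argument applies.'' It does not. Your $p<2$ chain runs
\begin{equation*}
\text{slice }L^p \xrightarrow{\text{H\"older}} \text{slice }L^2 \xrightarrow{p=2\text{ lemma}} \text{bulk }L^2 \xrightarrow{\text{Meyers, }p<2} \text{bulk }L^p,
\end{equation*}
and every arrow points the right way. For $p>2$ the first arrow reverses: H\"older on the slice gives slice $L^2\le |q_i|^{1/2-1/p}\,$slice $L^p$, i.e.\ the slice $L^p$ integral dominates the slice $L^2$ integral, so you cannot enter the chain. Likewise Meyers (Theorem~\ref{thm:Meyers}) bounds the higher exponent by the lower one, so for $p>2$ it gives bulk $L^p\le$ bulk $L^2$, which is the wrong terminus. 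There is no reversal of H\"older available on the slice because $u(\,\cdot\,,t)$ is not itself a solution of an elliptic equation in $\R^n$. A correct argument for $p>2$ must leave the covering scheme and work at exponent $p$ from the start: apply the one-dimensional Sobolev embedding in $L^p$ (valid since $p\ge1$),
\begin{equation*}
\abs{\nabla^j u(x,t)}^p \le \frac{C}{\ell(Q)}\int_{I'}\abs{\nabla^j u(x,s)}^p\,ds + C\,\ell(Q)^{p-1}\int_{I'}\abs{\nabla^j\partial_s u(x,s)}^p\,ds,
\end{equation*}
and then dispose of the second term using an $L^p$ Caccioppoli estimate obtained by sandwiching the usual $L^2$ Caccioppoli between Meyers (to descend from $L^p$ to $L^2$) and ordinary H\"older (to return from $L^2$ to $L^p$); the scaling factors cancel exactly. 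This is a different mechanism from your $p<2$ argument, and without it the full range of exponents claimed in the lemma is not reached.

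A more minor remark: you are right that $p<1$ is also out of reach of the $L^p$ Sobolev route, and your covering argument is precisely what bridges it, but you should present the two routes ($L^p$ Sobolev for $1\le p<p_{m-j}^+$, covering plus Meyers for $0<p<1$) rather than claim symmetry that is not there.
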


\subsection{The fundamental solution}

Recall from formula~\eqref{eqn:S:fundamental} that the single layer potential, originally constructed via the Lax-Milgram lemma, has an explicit representation as an integral operator involving the fundamental solution. We will often make use of this representation; thus, we now state the following result of \cite{Bar16} concerning the fundamental solution for higher order operators.

\begin{thm}[{\cite[Theorem~62 and Lemma~69]{Bar16}}]\label{thm:fundamental}
Let $L$ be an operator of the form~\eqref{eqn:divergence} of order~$2m$ associated to coefficients~$\mat A$ that satisfy the bounds \eqref{eqn:elliptic:bounded} and~\eqref{eqn:elliptic}.
Then there exists a function $E^L(X,Y)$ with the following properties.

Let $s=0$ or $s=1$ and let $q=0$ or $q=1$. 
There is some $\varepsilon>0$ such that if $X_0$, $Y_0\in\R^\dmn$,  if $0<r<R<\abs{X_0-Y_0}/3$, and if either $q=0$ or $\dmn\geq 3$, then
\begin{equation}
\label{eqn:fundamental:far}
\int_{B(Y_0,r)}\int_{B(X_0,R)} \abs{\nabla^{m-s}_X \nabla^{m-q}_Y E^L(X,Y)}^2\,dX\,dY 
\leq C r^{2q}R^{2s} \biggl(\frac{r}{R}\biggr)^\varepsilon
.\end{equation}
If $2q=2=\dmn$ then we instead have the bound
\begin{equation}
\label{eqn:fundamental:far:low}
\int_{B(Y_0,r)}\int_{B(X_0,R)} \abs{\nabla^{m-s}_X \nabla^{m-1}_Y E^L(X,Y)}^2\,dX\,dY \leq C_\delta\, r^{2} R^{2s} \biggl(\frac{R}{r}\biggr)^\delta
\end{equation}
for all $\delta>0$ and some constant $C_\delta$ depending on~$\delta$.

We have the symmetry property 
\begin{equation}
\label{eqn:fundamental:symmetric}
\partial_X^\zeta\partial_Y^\xi E^L(X,Y) = \overline{\partial_X^\zeta\partial_Y^\xi E^{L^*}(Y,X)}
\end{equation}
as locally $L^2$ functions, for all multiindices $\zeta$, $\xi$ with $m-1\leq \abs{\zeta}\leq m$, $m-1\leq\abs{\xi}\leq m$. 

Furthermore,
if $\abs{\alpha}=m$ then
\begin{equation}
\label{eqn:fundamental:2}
\partial^\alpha\Pi^L\arr H(X)
= \sum_{\abs{\beta}=m} \int_{\R^\dmn} 	\partial_X^\alpha\partial_Y^\beta E^L(X,Y)\,H_{\beta}(Y)\,dY
\quad\text{}
\end{equation}
for almost every $X\notin\supp \arr H$, and for all $\arr H\in L^2(\R^\dmn)$ whose support is not all of $\R^\dmn$.

Finally, if $\widetilde E^L$ is any other  function that satisfies the bounds~\eqref{eqn:fundamental:far},~\eqref{eqn:fundamental:far:low} 
and formula~\eqref{eqn:fundamental:2}, then
\begin{align}
\label{eqn:fundamental:unique}
\nabla_X^{m-q}\nabla_Y^{m-s}\widetilde E^L(X,Y)
=
\nabla_X^{m-q}\nabla_Y^{m-s}E^L(X,Y)
\end{align}
as locally $L^2$ functions provided $0\leq q\leq1$ and $0\leq s\leq 1$.
\end{thm}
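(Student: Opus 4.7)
The plan is to construct $E^L$ from the Newton potential $\Pi^L$ already produced by Lax-Milgram in \eqref{eqn:newton}, with the characterizing property \eqref{eqn:fundamental:2} determining the top-order derivatives $\partial_X^\alpha\partial_Y^\beta E^L$ as the Hilbert--Schmidt kernel of $\nabla^m\Pi^L$. Lower-order derivatives and $E^L$ itself are then recovered by successive integration, with the polynomial ambiguities of degree $m-1$ in each variable pinned down by the desired symmetry normalization $E^L(X,Y)=\overline{E^{L^*}(Y,X)}$. The quantitative content of the theorem will follow from the Caccioppoli inequality (Lemma~\ref{lem:Caccioppoli}), Meyers-type higher integrability (Theorem~\ref{thm:Meyers}), and a duality argument with the adjoint operator~$L^*$.

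First I would establish the $s=q=0$ version of the bound \eqref{eqn:fundamental:far} without the small gain $(r/R)^\varepsilon$. For $\arr H\in L^2(\R^\dmn)$ supported in $B(Y_0,r)$, the Lax-Milgram identity gives $\doublebar{\nabla^m\Pi^L\arr H}_{L^2(\R^\dmn)}\leq C\doublebar{\arr H}_{L^2(\R^\dmn)}$. Reading \eqref{eqn:fundamental:2} as a statement about the Hilbert--Schmidt kernel of $\nabla^m\Pi^L$ on arrays supported in $B(Y_0,r)$, this controls $\int_{B(X_0,R)}\abs{\nabla^m_X\nabla^m_Y E^L(X,Y)}^2\,dX$ in a weak sense. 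To get the full two-sided bound I would use the adjoint identity $\langle \nabla^m\Pi^{L^*}\arr G,\mat A\nabla^m\Pi^L\arr H\rangle_{\R^\dmn}=\langle\arr G,\nabla^m\Pi^L\arr H\rangle_{\R^\dmn}=\langle\nabla^m\Pi^{L^*}\arr G,\arr H\rangle_{\R^\dmn}$ to extract the top-order part of the symmetry \eqref{eqn:fundamental:symmetric}, and then run the Hilbert--Schmidt argument in $Y$ as well, producing $\int_{B(Y_0,r)}\int_{B(X_0,R)}\abs{\nabla^m_X\nabla^m_Y E^L}^2\leq C$.

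To upgrade this to the geometric decay $(r/R)^\varepsilon$ I would apply Theorem~\ref{thm:Meyers}: for fixed $X$ the function $Y\mapsto\nabla_X^m E^L(X,Y)$ solves $L^*_Y v=0$ on $B(Y_0,r)$ and hence satisfies a reverse H\"older inequality with some exponent $p>2$. Iterating this improvement on dyadic annuli between scales $r$ and $\abs{X_0-Y_0}$ and telescoping with H\"older's inequality converts the reverse H\"older gap into a small power $(r/R)^\varepsilon$. For the lower-derivative cases $q\geq 1$ or $s\geq 1$, I would apply Caccioppoli in $Y$ or $X$ respectively; after choosing polynomial normalizations of $E^L(\,\cdot\,,Y)$ and $E^L(X,\,\cdot\,)$ so that Poincar\'e-type inequalities apply, each reduction by one derivative produces exactly one factor of $r$ (or of~$R$), giving the weights $r^{2q}R^{2s}$. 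The borderline case $2q=\dmn=2$, where the relevant Sobolev embedding just fails, forces the slightly weaker estimate \eqref{eqn:fundamental:far:low} with the logarithmic-type loss $(R/r)^\delta$.

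The full symmetry \eqref{eqn:fundamental:symmetric} for $m-1\leq\abs\zeta,\abs\xi\leq m$ then follows by integrating the top-order version in the appropriate variable, respecting the polynomial normalization. Formula \eqref{eqn:fundamental:2} holds by construction for $\arr H$ compactly supported away from~$X$, and extends to general $\arr H\in L^2(\R^\dmn)$ with $\supp\arr H\neq\R^\dmn$ via the now-established far-field bounds and density. Uniqueness \eqref{eqn:fundamental:unique} is immediate: any other $\widetilde E^L$ obeying \eqref{eqn:fundamental:2} and the far-field bounds must have $\nabla^m_X\nabla^m_Y(E^L-\widetilde E^L)=0$ as a locally $L^2$ function off the diagonal, and Caccioppoli then propagates this to $\nabla^{m-q}_X\nabla^{m-s}_Y$ for $0\leq q,s\leq 1$. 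I expect the main obstacle to be producing the self-improved decay $(r/R)^\varepsilon$, which is genuinely a Meyers-type improvement and requires combining the reverse H\"older exponent with a careful dyadic telescoping compatible with the polynomial ambiguity of~$E^L$; the borderline regime $\dmn=2$ is a secondary but unavoidable technical nuisance that explains the weaker estimate \eqref{eqn:fundamental:far:low}.
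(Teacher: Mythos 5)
The paper does not prove Theorem~\ref{thm:fundamental}: it is quoted verbatim from \cite[Theorem~62 and Lemma~69]{Bar16}, and no argument is given here. So there is no ``paper's own proof'' to compare against; what follows is an assessment of your sketch on its own merits, bearing in mind what is known about the construction in \cite{Bar16}.

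Your outline has the right ingredients for the quantitative estimates (Caccioppoli reduction to trade derivatives for factors of $r$ or $R$, Meyers-type self-improvement on dyadic annuli to produce the gain $(r/R)^\varepsilon$, and the borderline $2q=\dmn=2$ producing the weaker estimate~\eqref{eqn:fundamental:far:low}), and the duality step for the symmetry~\eqref{eqn:fundamental:symmetric} is sensible. The serious gap is the very first step: you propose to ``read off'' $\partial_X^\alpha\partial_Y^\beta E^L$ as the Hilbert--Schmidt kernel of $\nabla^m\Pi^L$. But $\nabla^m\Pi^L$ is a bounded, non-compact operator on $L^2(\R^\dmn)$ (its norm is $\geq \lambda^{-1}$ on a sequence of nearly-extremal inputs, and for $L=\Delta^m$ it is a Fourier multiplier with a nonvanishing symbol), so it is not Hilbert--Schmidt and has no $L^2$ kernel globally. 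The existence of an integral kernel off the diagonal is exactly what must be \emph{proved}, not read off. The Lax--Milgram bound $\doublebar{\nabla^m\Pi^L\arr H}_{L^2}\leq C\doublebar{\arr H}_{L^2}$ gives you operator boundedness but does not by itself produce a kernel $E^L(X,Y)$. The construction in \cite{Bar16} (following the second-order scheme of Hofmann--Kim) uses an approximation: one builds averaged/mollified fundamental solutions $E^{L,\rho}$ that are honest functions, proves the estimates~\eqref{eqn:fundamental:far}--\eqref{eqn:fundamental:far:low} uniformly in $\rho$, and passes to a weak limit as $\rho\to 0$. Your far-field arguments would fit well into such a scheme (applied to the approximants, with uniform constants), but without the approximation step the object $E^L$ you are manipulating has not been shown to exist. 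A secondary issue: you would also need to be careful that the Meyers gain from Theorem~\ref{thm:Meyers} is compatible with the dyadic telescoping; the annuli must be chosen so that each application of the reverse H\"older inequality lands on a region where $L^*_Y E^L(X,\cdot)=0$, which is not automatic near the boundary of $B(Y_0,r)$ when $(r/R)$ is not small. This is handled in \cite{Bar16} but glossed over in your sketch.
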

Here $\Pi^L$ is the Newton potential defined by formula~\eqref{eqn:newton}. 

We remark that in particular, if  $\xi$ is a multiindex with $m-1\leq\abs\xi\leq m$, and if we let
\begin{equation*}u(z,r)=\partial_{y,s}^\xi E^L(z,r,y,s), \quad v(z,r)=\partial_{x,t}^\xi E^L(x,t,z,r)\end{equation*}
then $u\in \dot W^2_{m,loc}(\R^\dmn\setminus\{(y,s)\})$ and $v\in \dot W^2_{m,loc}(\R^\dmn\setminus\{(x,t)\})$ for almost every $(x,t)\in\R^\dmn$ and $(y,s)\in\R^\dmn$, and furthermore
\begin{equation*}Lu=0 \text{ in } \R^\dmn\setminus\{(y,s)\},
\qquad
L^*v=0 \text{ in } \R^\dmn\setminus\{(x,t)\}.\end{equation*}
In particular, we may apply Lemma~\ref{lem:slices} to the fundamental solution in either the first or second variables.

By uniqueness of the fundamental solution, if 
$\mat A$ is $t$-independent, and if $m-1\leq\abs\zeta\leq m$ and $m-1\leq\abs\xi\leq m$, then
\begin{equation}
\label{eqn:fundamental:shift}
\partial_{x,t}^\xi \partial_{y,s}^\zeta E^L(x,t+r,y,s+r) =\partial_{x,t}^\xi \partial_{y,s}^\zeta E^L(x,t,y,s)
\end{equation}
and so
\begin{equation}
\label{eqn:fundamental:vertical}
\partial_t\partial_{x,t}^\xi \partial_{y,s}^\zeta E^L(x,t,y,s) =-\partial_s\partial_{x,t}^\xi \partial_{y,s}^\zeta E^L(x,t,y,s)
.\end{equation}

\subsection{The lower half space}
\label{sec:lower}

Recall that Theorem~\ref{thm:potentials} involves bounds on the quantities $\widetilde N_+(\nabla^{m-k} u)$ and $\mathcal{A}_2^+(t\nabla^m\partial_t^k u)$, where $k=0$ or $k=1$ and where $u$ denotes various potentials. It is notationally convenient to work only in the upper half space.

However, estimates in terms of the two-sided nontangential maximal operator $\widetilde N_*$ defined in formula~\eqref{dfn:NTM:twosides} will also be of use. In particular, in Lemma~\ref{lem:lusin:+} we will pass from bounds on $\widetilde N_*(\nabla^{m-1} u)$ to bounds on $\mathcal{A}_2^+(t\nabla^m u)$, and in Lemma~\ref{lem:p:range} we will pass from bounds on $\widetilde N_*(\partial_\dmn^m \s^L\arr g)$ to bounds on $\widetilde N_+(\nabla^m \s^L\arr g)$.

We observe that we may easily translate bounds valid in the upper half space to bounds valid in the lower half space, using the following argument.

Let $A_{\alpha\beta}^-=(-1)^{\alpha_\dmn+\beta_\dmn} A_{\alpha\beta}$. Observe that if $\mat A$ is bounded or $t$-independent then so is $\mat A^-$. Let $\varphi$ and $u$ be scalar valued functions defined on $\R^\dmn$ and let $\varphi^-(x,t)=\varphi(x,-t)$, $u^-(x,t)=u(x,-t)$.
A straightforward change of variables argument establishes that
\begin{equation*}\langle \nabla^m\varphi,\mat A\nabla^m u\rangle_{\R^\dmn} = \langle \nabla^m\varphi^-,\mat A^-\nabla^m u^-\rangle_{\R^\dmn}.\end{equation*}

Choosing $u=\varphi$, we see that if $\mat A$ satisfies the ellipticity condition~\eqref{eqn:elliptic} then so does $\mat A^-$. 

Let $\arr H\in L^2(\R^\dmn)$ and let $u=\Pi^L\arr H$. Because $\Pi^L\arr H$ is the unique solution to the problem~\eqref{eqn:newton}, we have that if $H_\alpha^-(x,t)=(-1)^{\alpha_\dmn} H_\alpha(x,-t)$, then
\begin{equation}\Pi^{L}\arr H(x,-t)=\Pi^{L^-}\arr H^-(x,t).\end{equation}
By the definition~\eqref{dfn:D:newton:+} of the double layer potential and formula~\eqref{dfn:D:newton:-},
\begin{equation}\D^{\mat A}\arr f(x,-t)=-\D^{\mat A^-}\arr f^-(x,t)\end{equation}
where if $\arr f=\Tr_{m-1}F$, then $\arr f^-=\Tr_{m-1}F^-$. Similarly, by formula~\eqref{dfn:S}, if $g_\gamma^-(x)=(-1)^{\gamma_\dmn} g_\gamma(x)$, then
\begin{equation}\s^L\arr g(x,-t)=\s^{L^-}\arr g^-(x,t).\end{equation}
We may establish the similar formula 
\begin{equation}\s^L_\nabla\arr h(x,-t)=\s^{L^-}_\nabla\arr h{}^-(x,t),\end{equation}
where $h_\beta^-=(-1)^{\beta_\dmn}h_\beta$, using either uniqueness of the fundamental solution, or using formulas~\eqref{eqn:S:S:vertical} and~\eqref{eqn:S:S:horizontal} below.

Thus, we may easily pass from bounds in the upper half space to bounds in the lower half space.

\subsection{Nontangential bounds}
\label{sec:N:general}

In Sections~\ref{sec:S} and~\ref{sec:D} we will use the following two lemmas to establish nontangential bounds.

\begin{lem} \label{lem:N:1}
If $F\in L^2_{loc}(\R^\dmn_+)$ and $x_0\in \R^n$, then
\begin{equation*}
\widetilde N_+ F(x_0)
\leq 
C\sup_{ t_0>0} \biggl(\fint_{Q(x_0,t_0)}\fint_{t_0/6}^{t_0/2}\abs{F}^2\biggr)^{1/2}
\end{equation*}
where $Q(x_0,t_0)$ is the cube in $\R^n$ with midpoint $x_0$ and side length ${t_0}$.
\end{lem}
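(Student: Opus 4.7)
The plan is to unfold the definition of $\widetilde N_+$ and show that every ball appearing in the supremum is contained (with bounded volume ratio) in a cube times a vertical strip of the form on the right-hand side, after which the bound reduces to comparing averages.

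Fix $x_0 \in \R^n$. By the definition \eqref{dfn:NTM:modified}, it suffices to show that for every $s > 0$ and every $y \in \R^n$ with $\abs{x_0 - y} < s$,
\begin{equation*}
\fint_{B((y,s), s/2)} \abs{F}^2 \leq C \fint_{Q(x_0, t_0)} \fint_{t_0/6}^{t_0/2} \abs{F}^2
\end{equation*}
for some $t_0 > 0$ depending on $s$. I would take $t_0 = 3s$. With that choice, $(t_0/6, t_0/2) = (s/2, 3s/2)$ is exactly the vertical slab containing the ball $B((y,s), s/2)$, since $(z, r) \in B((y,s), s/2)$ forces $\abs{r - s} < s/2$. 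Horizontally, any such $z$ satisfies $\abs{z - y} < s/2$, so
\begin{equation*}
\abs{z - x_0} \leq \abs{z - y} + \abs{y - x_0} < s/2 + s = 3s/2 = t_0/2,
\end{equation*}
and since the $\ell^\infty$ norm is dominated by the Euclidean norm, $z \in Q(x_0, t_0)$.

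This gives the inclusion $B((y, s), s/2) \subset Q(x_0, t_0) \times (t_0/6, t_0/2)$. Both sets have volume comparable to $s^{\dmn}$ (the ball has volume $c_\dmn (s/2)^\dmn$ and the cylinder has volume $3^{\dmnMinusOne} s^\dmn$), so the ratio of volumes is a dimensional constant. The pointwise inequality
\begin{equation*}
\fint_{B((y,s), s/2)} \abs{F}^2 \leq \frac{\abs{Q(x_0, t_0) \times (t_0/6, t_0/2)}}{\abs{B((y,s), s/2)}} \fint_{Q(x_0, t_0)} \fint_{t_0/6}^{t_0/2} \abs{F}^2
\end{equation*}
then yields the desired bound upon taking square roots and the supremum over admissible $(y, s)$ on the left and the supremum over $t_0 > 0$ on the right. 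There is no real obstacle here; the entire content is the geometric containment of the nontangential ball inside a vertical slab over a cube centered at~$x_0$, which is why the choice $t_0 = 3s$ (making the strip $(s/2, 3s/2)$ match the vertical extent of the ball exactly) works cleanly.
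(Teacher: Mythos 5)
Your proof is correct and matches the paper's argument: the paper also reparametrizes (taking $s=t_0/3$) so that the ball $B((y,s),s/2)$ sits inside the slab $Q(x_0,t_0)\times(t_0/6,t_0/2)$, and then compares averages via the volume ratio. Nothing further to add.
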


\begin{proof}
Recall from the definition~\eqref{dfn:NTM:modified} that
\begin{align*}
\widetilde N_+ F(x_0) 
&= \sup
\biggl\{\biggl(\fint_{B((y,t_0/3),{t_0}/6)} \abs{F}^2\biggr)^{1/2}:
\abs{x-y}<t_0/3
\biggr\}.\end{align*}
But $B((y,t_0/3),{t_0}/6) \subset Q(x_0,t_0)\times (t_0/6, t_0/2)$, and so
\begin{align*}
\widetilde N_+ F(x_0) 
&\leq \sup_{t_0>0}
\biggl(\frac{6^\dmn}{\omega_\dmn t_0^\dmn}\int_{Q(x_0,t_0)}\int_{t_0/6}^{t_0/2} \abs{F}^2\biggr)^{1/2}
\end{align*}
where $\omega_\dmn$ is the volume of the unit ball in $\R^\dmn$, as desired
\end{proof}

The following lemma is very useful for bounding solutions in cubes, and in particular in $Q(x_0,t_0)\times (t_0/6,7t_0/6)$ or in $Q(x_0,t_0)\times(-t_0/2,t_0/2)$.

\begin{lem}\label{lem:iterate} Let $L$ be an operator of the form~\eqref{eqn:divergence} of order~$2m$ associated to $t$-independent coefficients~$\mat A$ that satisfy the bounds \eqref{eqn:elliptic:bounded} and~\eqref{eqn:elliptic}.

Let $Q\subset\R^n$ be a cube and let $\widetilde Q=Q\times(s_0-\ell(Q)/2,s_0+\ell(Q)/2)$ be a cube in $\R^\dmn$.

Suppose that $u\in \dot W^2_m(2\widetilde Q)$ and that $Lu=0$ in $2\widetilde Q$. Let $0\leq j\leq m$, and let $s_0-\ell(Q)/2\leq\tau\leq s_0+\ell(Q)/2$. Then
\begin{align*}\fint_{\widetilde Q} \abs{\nabla^j u(x,t)}^2\,dt\,dx
&\leq
C\ell(Q)^2\biggl(\fint_{2\widetilde Q}\abs{\partial_t^{j+1} u(x,t)}\,dt\,dx\biggr)^2
+C \biggl(\fint_{2Q} \abs{\nabla^{j} u(x,\tau)}\,dx\biggr)^2
.\end{align*}
\end{lem}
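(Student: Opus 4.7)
The plan is to combine three tools: the one-dimensional Poincar\'e/fundamental-theorem-of-calculus applied in the $t$-variable to $\nabla^j u$; Meyers's reverse H\"older estimate (Theorem~\ref{thm:Meyers}) applied both to $u$ and to its iterated $t$-derivatives, each of which solves $L\cdot=0$ by the $t$-independence of $\mat A$; and the slice estimate (Lemma~\ref{lem:slices}), which transfers between slice and cube integrals of $\nabla^j\partial_t^k u$ for arbitrary $k$.

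First, I would apply the fundamental theorem of calculus: for each $(x,t)\in\widetilde Q$,
$$\nabla^j u(x,t)-\nabla^j u(x,\tau)=\int_\tau^t\partial_s\nabla^j u(x,s)\,ds.$$
Squaring, using the Cauchy--Schwarz inequality in $s$ together with $|t-\tau|\le\ell(Q)$, and integrating against $\widetilde Q$ yield the preliminary $L^2$ Poincar\'e bound
$$\fint_{\widetilde Q}|\nabla^j u|^2\le 2\fint_Q|\nabla^j u(x,\tau)|^2\,dx+2\ell(Q)^2\fint_{\widetilde Q}|\nabla^j\partial_s u|^2.$$

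Next I would upgrade each $L^2$ average on the right-hand side to a squared $L^1$ average. The slice integral $\fint_Q|\nabla^j u(x,\tau)|^2$ is first promoted to a cube integral via Lemma~\ref{lem:slices} (with $k=0$ and $p=2$, which is admissible because $p_{m-j}^+>2$), and then converted by Theorem~\ref{thm:Meyers} into $C(\fint|\nabla^j u|)^2$ on an enlarged cube; the term $\fint_{\widetilde Q}|\nabla^j\partial_s u|^2$ is handled in the same way applied to the $L$-solution $\partial_s u$. Combining these with the $L^1$-version of the Poincar\'e identity in $t$ rewrites the resulting cube integral of $|\nabla^j u|$ as a slice integral $\fint_{2Q}|\nabla^j u(x,\tau)|$ plus a cube integral of $|\partial_s\nabla^j u|$ weighted by $\ell(Q)$.

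The hard step, and the one that requires the full strength of $t$-independence beyond the mere observation that $\partial_s u$ is a solution, is to replace the remaining cube integrals of the \emph{full} derivative $|\nabla^j\partial_s u|$ (in particular its horizontal-heavy components $|\partial_\pureH^\gamma\partial_s^{k+1}u|$ with $|\gamma|\ge 1$) by cube integrals of the \emph{pure vertical} derivative $|\partial_t^{j+1}u|$ required by the statement. Since every $\partial_t^\ell u$ is again a solution of $L\cdot=0$, Lemma~\ref{lem:slices} and Theorem~\ref{thm:Meyers} apply to any $\partial_t^\ell u$; a finite iteration of these two tools and of the $L^1$-Poincar\'e-in-$t$ identity allows one to transport one horizontal derivative at a time into an extra $\partial_t$ derivative, with the lower-order slice pieces produced along the way being absorbed back into $\fint_{2Q}|\nabla^j u(x,\tau)|$ via the same slice-lemma/Meyers machinery. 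After at most $j\le m$ such iterations only the pure vertical derivative $\partial_t^{j+1}u$ remains, and reassembling the pieces (at the cost of enlarging cubes by absolute constants, which is harmless up to a change of $C$) gives the stated estimate.
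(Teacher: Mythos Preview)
Your overall strategy---apply the fundamental theorem of calculus in $t$, use Meyers's inequality to pass from $L^2$ averages to squared $L^1$ averages, and iterate to convert the full gradient $\nabla^j\partial_t u$ into the pure vertical derivative $\partial_t^{j+1}u$---matches the paper's. But the iteration step you describe has a genuine gap: the tools you invoke (Lemma~\ref{lem:slices}, Theorem~\ref{thm:Meyers}, and the $L^1$-Poincar\'e identity in~$t$) cannot by themselves ``transport one horizontal derivative into an extra $\partial_t$ derivative.'' Neither the slice lemma nor Meyers's inequality changes the order or the type of the derivatives appearing; and the fundamental theorem of calculus in $t$ only \emph{adds} a $\partial_t$ without \emph{removing} any horizontal derivative. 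With only these three tools, each iteration produces $\ell(Q)\fint\abs{\nabla^j\partial_t^{k+1}u}$ from $\fint\abs{\nabla^j\partial_t^k u}$, so the full gradient $\nabla^j$ never disappears and you will never reach $\partial_t^{j+1}u$.

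The missing ingredient is the Caccioppoli inequality (Lemma~\ref{lem:Caccioppoli}), applied to the solution $\partial_t^{k+1}u$ (which solves $L(\partial_t^{k+1}u)=0$ by $t$-independence). Caccioppoli lowers the order of the gradient at the cost of a factor $\ell(Q)^{-2}$, exactly cancelling the $\ell(Q)^2$ produced by the FTC step:
\[
\ell(Q)^2\fint\abs{\nabla^{j-k}\partial_t^{k+1}u}^2
\;\xrightarrow{\text{Caccioppoli}}\;
C\fint\abs{\nabla^{j-k-1}\partial_t^{k+1}u}^2.
\]
This is how the paper actually proceeds: at each step one applies Meyers (to go $L^2\to (L^1)^2$), then FTC in $t$ (adding one $\partial_t$ and picking up a slice term dominated by $\fint_{2Q}\abs{\nabla^j u(\cdot,\tau)}$), then Caccioppoli (removing one $\nabla$). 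After $j$ such steps only $\partial_t^j u$ remains, and one final Meyers~$+$~FTC gives the $\partial_t^{j+1}u$ term. Note also that the paper does not use Lemma~\ref{lem:slices} at all in this argument; the slice terms arise directly from the FTC step.
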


\begin{proof}
Let $0\leq k\leq j$. Let $\varepsilon>0$ be a small positive number and let $\widetilde Q_k=(1+k\varepsilon)\widetilde Q$.
By Theorem~\ref{thm:Meyers},
\begin{equation*}
\fint_{\widetilde Q_{k}} \abs{\nabla^{j-k}\partial_t^k u(x,t)}^2\,dt\,dx
\leq
C_\varepsilon \biggl(\fint_{\widetilde Q_{k+1/2}} \abs{\nabla^{j-k}\partial_t^ku(x,t)}\,dt\,dx\biggr)^2
.\end{equation*}
If $(x,t)\in\widetilde Q_{k+1/2}$, then
\begin{align*}
\abs{\nabla^{j-k}\partial_t^k u(x,t)}^2
&\leq
	\abs{
	\nabla^{j-k}\partial_t^k u(x,t)-\nabla^{j-k}\partial_t^k u(x,\tau)
	}
	+ \abs{\nabla^{j-k}\partial_t^k u(x,\tau)}
\\&\leq
	C\int_{s_0-\ell(Q_{k+1/2})/2}^{s_0+\ell(Q_{k+1/2})/2}
	\abs{
	\nabla^{j-k}\partial_t^{k+1} u(x,t)
	}\,dt
	+ \abs{\nabla^{j-k}\partial_t^k u(x,\tau)}
.
\end{align*}
Thus, by H\"older's inequality
\begin{align*}
\fint_{\widetilde Q_k} \abs{\nabla^{j-k}\partial_t^k u(x,t)}^2\,dt\,dx
&
\leq 
{C_\varepsilon}{\ell(Q_{k+1/2})^2}
\fint_{\widetilde Q_{k+1/2}} 
	\abs{\nabla^{j-k}\partial_t^{k+1} u(x,t)}^2\,dt\,dx
\\&\qquad+
C_\varepsilon
\biggl(\fint_{Q_{k+1/2}} \abs{\nabla^{j-k}\partial_t^k u(x,0)}\,dx\biggr)^2
.\end{align*}
If $k\leq j-1$, then by the Caccioppoli inequality,
\begin{align*}
\fint_{\widetilde Q_k} \abs{\nabla^{j-k}\partial_t^k u(x,t)}^2\,dt\,dx
&
\leq 
{C_\varepsilon}
\fint_{\widetilde Q_{k+1}} 
	\abs{\nabla^{j-k-1}\partial_t^{k+1} u(x,t)}^2\,dt\,dx
\\&\qquad+
C_\varepsilon
\biggl(\fint_{Q_{k+1}} \abs{\nabla^{j-k}\partial_t^k u(x,\tau)}\,dx\biggr)^2
.\end{align*}
Iterating, we see that
\begin{align*}
\fint_{\widetilde Q} \abs{\nabla^{j} u(x,t)}^2\,dt\,dx
&
\leq 
{C_\varepsilon}{\ell(Q_{j+1/2})^2}
\fint_{\widetilde Q_{j+1/2}} 
	\abs{\partial_t^{j+1} u(x,t)}^2\,dt\,dx
\\&\qquad+
C_\varepsilon
\biggl(\fint_{Q_{j+1/2}} \abs{\nabla^{j}u(x,0)}\,dx\biggr)^2
.\end{align*}
A final application of Theorem~\ref{thm:Meyers} yields that
\begin{align*}
\fint_{\widetilde Q} \abs{\nabla^{j} u(x,t)}^2\,dt\,dx
&
\leq 
{C_\varepsilon}{\ell(Q_{j+1/2})^2}
\biggl(\fint_{\widetilde Q_{j+1}} 
	\abs{\partial_t^{j+1} u(x,t)}\,dt\,dx\biggr)^2
\\&\qquad+
C_\varepsilon
\biggl(\fint_{Q_{j+1/2}} \abs{\nabla^{j}u(x,0)}\,dx\biggr)^2
.\end{align*}
Letting $\varepsilon=1/(j+1)$ and so $ Q_{j+1} =2 Q$ completes the proof.
\end{proof}

\subsection{Area integral bounds}
\label{sec:lusin:general}

We will use the following lemma to establish the area integral bounds in 
Theorem~\ref{thm:lusin:intro}.

\begin{lem}
\label{lem:lusin:+} 
Let $\arr u\in L^2_{loc}(\R^\dmn_\pm)$ satisfy ${\mathcal{A}_2^+ (t\arr u)}\in L^2(\R^n)$.
Suppose that there is a nonnegative real-valued function $\varphi$ defined on $\R^n$, and a family of functions $\arr u_Q$ indexed by cubes $Q\subset\R^n$, such that if $Q\subset\R^n$ is a cube, then
\begin{equation*}\doublebar{\mathcal{A}_2^+ (t\arr u_Q)}_{L^2(\R^n)}^2
+ \int_{(3/2)Q}\int_0^{\ell(Q)/4} \abs{\arr u(x,t) - \arr u_Q(x,t)}^2\,t\,dt\,dx
\leq \doublebar{\varphi}_{L^2(4Q)}^2.\end{equation*}

Then there is some $\varepsilon>0$ such that
\begin{equation*}\doublebar{\mathcal{A}_2^+(t\,\arr u)}_{L^p(\R^n)} 
\leq C_p\doublebar{\varphi}_{L^p(\R^n)}\end{equation*}
for any $2\leq p<2+\varepsilon$.

In particular, let $L$ be an operator of the form~\eqref{eqn:divergence} of order~$2m$ associated to $t$-independent coefficients~$\mat A$ that satisfy the bounds \eqref{eqn:elliptic:bounded} and~\eqref{eqn:elliptic}, and let $u\in \dot W^2_{m,loc}(\R^\dmn_+\cup\R^\dmn_-)$. If for each cube $Q\subset\R^n$ there is a function $u_Q$ such that $u-u_Q\in \dot W^2_m(3Q\times (-\ell(Q),\ell(Q)) )$ and $L(u-u_Q)=0$ in $3Q\times(-\ell(Q),\ell(Q))$, and if
\begin{equation*}\doublebar{\mathcal{A}_2^+ (t\nabla^m u_Q)}_{L^2(\R^n)}^2
+\doublebar{\widetilde N_*(\nabla^{m-1} u_Q)}_{L^2(\R^n)}^2
\leq \doublebar{\psi}_{L^2(4Q)}^2,\end{equation*}
then there is some $\varepsilon>0$ such that
\begin{equation*}\doublebar{\mathcal{A}_2^+(t\,\nabla^m u)}_{L^p(\R^n)} 
\leq C_p\doublebar{\psi}_{L^p(\R^n)}+C_p\doublebar{\widetilde N_*(\nabla^{m-1} u)}_{L^p(\R^n)}
\end{equation*}
for any $2\leq p<2+\varepsilon$.
\end{lem}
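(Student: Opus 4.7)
The plan is to invoke a Shen-type $L^p$ self-improvement argument for $F(x) := \mathcal{A}_2^+(t\arr u)(x)$. Fix a cube $Q \subset \R^n$, set $h = \ell(Q)/4$, and split the cone at height $h$: $F(x)^2 = F_{\mathrm{lo}}(x)^2 + F_{\mathrm{hi}}(x)^2$, where $F_{\mathrm{lo}}$ and $F_{\mathrm{hi}}$ denote the truncated area integrals of $t\arr u$ over $0 < t \leq h$ and $t > h$ respectively. Subadditivity then yields
\begin{equation*}
F(x) \leq F_{\mathrm{lo},1}(x) + F_{\mathrm{lo},2}(x) + F_{\mathrm{hi}}(x), \qquad x \in Q,
\end{equation*}
where $F_{\mathrm{lo},1}$ and $F_{\mathrm{lo},2}$ are the height-$h$ truncated area integrals of $\arr u - \arr u_Q$ and of $\arr u_Q$, respectively.

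The two local pieces are controlled directly from the hypothesis. A Fubini calculation collapses $\int_Q F_{\mathrm{lo},1}(x)^2\,dx$ to a dimensional constant times $\int_{(3/2)Q}\int_0^h \abs{\arr u - \arr u_Q}^2 t\,dt\,dx$, which is at most $\doublebar{\varphi}_{L^2(4Q)}^2$; the trivial pointwise bound $F_{\mathrm{lo},2} \leq \mathcal{A}_2^+(t\arr u_Q)$ and the hypothesis give $\int_Q F_{\mathrm{lo},2}(x)^2\,dx \leq \doublebar{\varphi}_{L^2(4Q)}^2$. For the nonlocal piece, if $x, x' \in Q$ and $\abs{x-y} < t$ with $t \geq h$, then $\abs{x' - y} < t + \sqrt n\,\ell(Q) \leq (1 + 4\sqrt n)\,t$, so widening the cone by $\alpha := 1 + 4\sqrt n$ gives $F_{\mathrm{hi}}(x) \leq \mathcal{A}_2^{(\alpha),+,>h}(t\arr u)(x')$ uniformly in $x, x' \in Q$, where $\mathcal{A}_2^{(\alpha),+,>h}$ denotes the widened, height-$h$-truncated area integral. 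Taking the infimum over $x' \in Q$, bounding it by an $L^2$-mean, and then performing a Fubini comparison $\abs{B(y, \alpha t) \cap Q} \leq C\, \abs{B(y,t) \cap CQ}$ valid for $t \geq h$ and a suitably large dilation $C$ yields the pointwise estimate $\sup_{x \in Q} F_{\mathrm{hi}}(x) \leq C\bigl(\fint_{CQ} F^2\bigr)^{1/2}$. With these two ingredients in hand---an $L^2(Q)$ bound of the local pieces by $\doublebar{\varphi}_{L^2(4Q)}$ together with a pointwise bound on $F_{\mathrm{hi}}$ over $Q$ by the $L^2$-average of $F$ on $CQ$---Shen's $L^p$ extrapolation lemma produces some $\varepsilon > 0$ and the stated bound $\doublebar{F}_{L^p(\R^n)} \leq C_p \doublebar{\varphi}_{L^p(\R^n)}$ for $2 \leq p < 2 + \varepsilon$; the standing assumption $\mathcal{A}_2^+(t\arr u) \in L^2(\R^n)$ supplies the initial integrability required to start the iteration.

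For the ``in particular'' clause, I apply the first part with $\arr u = \nabla^m u$ and $\arr u_Q = \nabla^m u_Q$: the first summand of the hypothesis is immediate with $\psi$ in place of $\varphi$. For the second summand, since $v := u - u_Q$ satisfies $L v = 0$ in $3Q \times (-\ell(Q), \ell(Q))$, I would cover $(3/2)Q \times (0, \ell(Q)/4)$ by a Whitney family of balls $B_j$ of radius $r_j$ comparable to their centre heights, apply the Caccioppoli inequality (Lemma~\ref{lem:Caccioppoli}) on each $2 B_j \subset 3Q \times (-\ell(Q), \ell(Q))$, weight by $t \sim r_j$, and absorb the resulting averages $\fint_{2 B_j} \abs{\nabla^{m-1} v}^2$ into $\widetilde N_*(\nabla^{m-1} v)^2$ via the defining supremum~\eqref{dfn:NTM:twosides}. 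Summing across $j$ produces the Carleson-type estimate
\begin{equation*}
\int_{(3/2)Q}\int_0^{\ell(Q)/4} \abs{\nabla^m v}^2\, t\,dt\,dx \leq C \int_{2Q} \widetilde N_*(\nabla^{m-1} v)(x)^2\,dx,
\end{equation*}
whose right-hand side is bounded, by the triangle inequality for $\widetilde N_*$ and the hypothesis, by $C(\doublebar{\widetilde N_*(\nabla^{m-1} u)}_{L^2(2Q)}^2 + \doublebar{\psi}_{L^2(4Q)}^2)$. Applying the first part with $\varphi := C(\psi + \widetilde N_*(\nabla^{m-1} u))$ concludes the proof. The principal obstacle I expect is the widened-cone Fubini comparison yielding $\sup_Q F_{\mathrm{hi}} \leq C\bigl(\fint_{CQ} F^2\bigr)^{1/2}$, which requires case analysis in the regimes $t \sim \ell(Q)$ versus $t \gg \ell(Q)$; the Whitney/Caccioppoli Carleson estimate in the second part also demands careful geometric organisation to ensure that each enlarged ball $2 B_j$ remains in the region where $L(u - u_Q) = 0$.
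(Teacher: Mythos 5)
Your decomposition of the low piece is the same as the paper's, and your reduction of the ``in particular'' clause to the general case via Caccioppoli plus $\widetilde N_*$ is also in the same spirit (the paper simply bounds the weight $t$ by $\ell(Q)/4$ and applies Caccioppoli once at scale $\ell(Q)$ rather than over a Whitney family, which is cheaper but amounts to the same thing). The place where your proposal breaks is the nonlocal piece. The measure inequality you invoke,
\[\abs{B(y,\alpha t)\cap Q}\le C\,\abs{B(y,t)\cap CQ}\qquad (t\ge \ell(Q)/4),\]
is false, no matter how large you take the fixed dilation $C$: if $t\gg \ell(Q)$ and $\dist(y,Q)$ is just below $\alpha t$, then $B(y,\alpha t)\cap Q\neq\emptyset$ while $\dist(y,CQ)\ge \dist(y,Q)-C\sqrt n\,\ell(Q)>t$, so the right-hand side is zero. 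You flagged the $t\gg\ell(Q)$ regime as the ``principal obstacle,'' and indeed it cannot be repaired by case analysis; the aperture-widening step is the wrong move.

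The pointwise conclusion you want, namely $F_{\mathrm{hi}}(x)^2\le C\fint_{CQ}F^2$ for $x\in Q$, is nonetheless true, and a direct argument gets it without widening the aperture at all: for $x\in Q$, $t\ge\ell(Q)/4$ and $\abs{x-y}<t$, the ball $B(y,t)$ contains $x$ and has radius at least $\ell(Q)/4$, so $B(y,t)\cap CQ\supset B(y,t)\cap B\bigl(x,(C-1)\ell(Q)/2\bigr)$ has measure at least $c_n\min\{(C-1)\ell(Q)/2,\,t\}^n\ge c(n,C)\,\abs{CQ}$. Inserting this density lower bound and integrating in $(y,t)$ (Fubini in the aperture-one cone, no change of aperture) gives the claim. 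This is a genuinely different route from the paper's: the paper instead tiles $(3/2)Q$ by a grid of sub-cubes $R$ of side $\ell(Q)/N$ and uses $\inf_{z\in R}\mathcal A_2^+(t\arr u)(z)^2\le(\fint_R\mathcal A_2^+(t\arr u))^2$, summing to get a bound by $\abs{Q}\bigl(\fint_{2Q}\mathcal A_2^+(t\arr u)\bigr)^2$. That $L^1$-average form is exactly what Lemma~\ref{lem:iwaniec} requires; an $L^2$-average there would render the estimate trivially true and useless. Your route instead feeds the $L^\infty$-on-$Q$-by-$L^2$-average bound into Shen's good-$\lambda$ lemma, which is the right pairing, and in fact (once the Fubini step is fixed) yields the stated bound for every $p\in[2,\infty)$, more than the $p<2+\varepsilon$ that the paper extracts from Iwaniec's lemma.
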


We will use the following lemma.
\begin{lem}[{\cite[Lemma~3.2]{Iwa98}}] \label{lem:iwaniec} Suppose that $g$, $h\in L^q(\R^n)$ are nonnegative real-valued functions, $1<q<\infty$, and that for some $C_0>0$ and for all cubes $Q\subset\R^n$,
\begin{equation*}\biggl(\fint_Q g^q\biggr)^{1/q}\leq C_0\fint_{4Q}g+\biggl(\fint_{4Q} h^q\biggr)^{1/q}.\end{equation*}
Then there exist numbers $s>q$ and $C>0$ depending only on $n$, $q$ and $C_0$ such that if $h\in L^s(\R^n)$, then
\begin{equation*}\int_{\R^n} g^s\leq C\int_{\R^n} h^s.\end{equation*}
\end{lem}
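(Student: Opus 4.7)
The plan is in two steps: first prove the $L^2$ bound directly by partitioning $\R^n$ into cubes, and then upgrade to $L^p$ for $p > 2$ by applying Iwaniec's self-improvement lemma (Lemma~\ref{lem:iwaniec}) to a local reverse-Hölder inequality. The PDE-specific corollary will then reduce to the abstract statement via the Caccioppoli inequality applied to $v = u - u_Q$. The basic geometric tool throughout is the near/far decomposition of the area integral at scale $\ell(Q)/4$: if we set $g = \mathcal{A}_2^+(t\arr u)$ and split $g^2 = g_{\mathrm{near}}^2 + g_{\mathrm{far}}^2$ (integrating over $t < \ell(Q)/4$ and $t \geq \ell(Q)/4$ respectively), then for $x \in Q$ the cone condition $|x-y|<t$ in $g_{\mathrm{near}}$ forces $y \in (3/2)Q$.

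For the $L^2$ bound, fix $R > 0$ and partition $\R^n$ into a mesh of cubes $\{Q_i\}$ of common side-length $R$. Writing $\arr u = \arr u_{Q_i} + (\arr u - \arr u_{Q_i})$ on each $Q_i$ and using the triangle inequality and Fubini gives $\int_{Q_i} g_{\mathrm{near},i}^2 \leq 2\doublebar{\mathcal{A}_2^+(t\arr u_{Q_i})}_{L^2(\R^n)}^2 + C\doublebar{\varphi}_{L^2(4Q_i)}^2$ by the hypothesis of the lemma. Summing using bounded overlap of the cubes $\{4Q_i\}$ and sending $R \to \infty$ (so that $g_{\mathrm{near},i}(x) \to g(x)$ pointwise), monotone convergence yields $\doublebar{g}_{L^2(\R^n)} \leq C\doublebar{\varphi}_{L^2(\R^n)}$.

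For the $L^p$ bound with $p > 2$, I establish the reverse-Hölder inequality
$$\biggl(\fint_Q g^2\biggr)^{1/2} \leq C_0\fint_{KQ} g + \biggl(\fint_{KQ}\varphi^2\biggr)^{1/2}$$
for every cube $Q \subset \R^n$ and some fixed dilation $K$. The near term yields $\fint_Q g_{\mathrm{near}}^2 \leq C\fint_{4Q}\varphi^2$ by the same Fubini computation as above. For the far term, a geometric cone comparison shows that for $x, x^* \in Q$ and $t \geq \ell(Q)/4$, $\{|x-y|<t\} \subset \{|x^*-y|<(1 + 4\sqrt n)t\}$, hence $g_{\mathrm{far}}(x) \leq \widetilde{\mathcal A}(t\arr u)(x^*)$ with $\widetilde{\mathcal A}$ the area integral of aperture $1 + 4\sqrt n$; averaging over $x^* \in Q$ via $\inf \leq L^1$-average gives $g_{\mathrm{far}}(x) \leq \fint_Q \widetilde{\mathcal A}(t\arr u)$. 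Converting back from the wider aperture to the standard aperture is done locally by dyadically decomposing the $t$-range, bounding each dyadic scale by an average of $g$ over a cube of comparable size via Fubini, and summing the resulting geometric series in scales to obtain $\fint_Q \widetilde{\mathcal A}(t\arr u) \leq C\fint_{KQ} g$. Iwaniec's lemma with $q = 2$ and $h = \varphi$ now produces the $L^s$-bound for some $s > 2$, and interpolation with the $L^2$ bound covers the full range $2 \leq p < 2 + \varepsilon$.

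For the PDE-specific corollary, I apply the abstract statement to $\arr u = \nabla^m u$ and $\arr u_Q = \nabla^m u_Q$ with $\varphi = \psi + \widetilde N_*(\nabla^{m-1} u)$. The bound on $\mathcal{A}_2^+(t\nabla^m u_Q)$ transfers directly, and the Carleson integral of $\nabla^m(u - u_Q)$ is controlled by noting that $v = u - u_Q$ satisfies $Lv = 0$ in $3Q \times (-\ell(Q), \ell(Q))$ and applying the Caccioppoli inequality (Lemma~\ref{lem:Caccioppoli}) to reduce to an $L^2$-estimate of $\nabla^{m-1} v$ on a slightly enlarged cylinder; the latter is bounded by the $L^2$-norms of $\widetilde N_*(\nabla^{m-1} u)$ and $\widetilde N_*(\nabla^{m-1} u_Q)$ on $4Q$ via a standard Whitney covering argument. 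The principal obstacle is the local aperture-change step in the far-part estimate: while the global $L^p$-equivalence of different-aperture area integrals is classical, Iwaniec's framework demands a localized version with an $L^1$-average on the right-hand side, and effecting this rigorously requires the careful dyadic-Fubini argument sketched above.
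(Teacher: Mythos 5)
Your proposal does not prove the statement it was asked to prove. The statement here is Iwaniec's self-improvement (Gehring-type) lemma itself: from the hypothesis that $g$ satisfies the weak reverse H\"older inequality $\bigl(\fint_Q g^q\bigr)^{1/q}\leq C_0\fint_{4Q}g+\bigl(\fint_{4Q}h^q\bigr)^{1/q}$ on all cubes, one must deduce higher integrability $\int g^s\leq C\int h^s$ for some $s>q$. Your argument never establishes this implication; instead, at the decisive moment you write that ``Iwaniec's lemma with $q=2$ and $h=\varphi$ now produces the $L^s$-bound,'' i.e.\ you invoke the very lemma you were supposed to prove. What you have actually sketched is a proof of the area-integral transference result (Lemma~\ref{lem:lusin:+}) -- the near/far splitting of $\mathcal{A}_2^+$ at scale $\ell(Q)/4$, the aperture-change estimate for the far part, and the Caccioppoli reduction for the PDE corollary -- which is a different lemma of the paper and one whose proof \emph{uses} Lemma~\ref{lem:iwaniec}. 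Read as a proof of the assigned statement, the argument is therefore circular, and read as a whole it leaves the real-variable self-improvement step entirely unproved.

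A genuine proof of the statement is a purely real-variable stopping-time argument, independent of any area integrals or elliptic equations: one performs a Calder\'on--Zygmund decomposition at each level $\lambda$ (e.g.\ on the set where the maximal function of $g^q$ exceeds $\lambda^q$), uses the hypothesis on the stopping cubes to obtain a distributional (good-$\lambda$ type) inequality of the form $\int_{\{g>\lambda\}}g^q\leq C\lambda^{q-1}\int_{\{g>\beta\lambda\}}g+C\int_{\{h>\beta\lambda\}}h^q$, and then integrates in $\lambda$ against $\lambda^{s-q-1}\,d\lambda$ and absorbs, which is Gehring's iteration adapted to the inhomogeneous term $h$; the smallness of $s-q$ comes from the absorption step. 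The paper itself does not reprove this: it quotes the result from Iwaniec (Lemma~3.2 of that paper) and only remarks that the global hypothesis $h\in L^q(\R^n)$ can be relaxed to $h\in L^q_{loc}(\R^n)$ by applying a localized version on larger and larger regions. So either supply the stopping-time argument above or, as the authors do, cite the source -- but the material in your proposal belongs to the proof of Lemma~\ref{lem:lusin:+}, not of Lemma~\ref{lem:iwaniec}.
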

We remark that the assumption $h\in L^q(\R^n)$ is not necessary; it suffices to require $h\in L^q_{loc}(\R^n)$. To see this, we may, for example, use a local version of this lemma (e.g., \cite[Proposition~6.1]{Iwa98}) in larger and larger localized regions.

\begin{proof}[Proof of Lemma~\ref{lem:lusin:+}]

We begin with the special case where $Lu=0$.
By the Caccioppoli inequality,
\begin{multline*}
\int_{(3/2)Q}\int_0^{\ell(Q)/4} \abs{\nabla^m (u-u_Q)(x,t)}^2\,dt\,dx
\\\leq 
\frac{C}{\ell(Q)^2}\int_{3Q} \int_{-\ell(Q)}^{\ell(Q)} \abs{\nabla^{m-1}(u-u_Q)(y,t)}^2\,dy\,dt.
\end{multline*}
It is straightforward to bound the right hand side by $\widetilde N_*(\nabla^{m-1}(u-u_Q))$, and so
\begin{multline*}
\frac{\ell(Q)}{4}\int_{(3/2)Q}\int_0^{\ell(Q)/4} \abs{\nabla^m (u-u_Q)(x,t)}^2\,dt\,dx
\\\leq 
C\int_{3Q} (\widetilde N_*(\nabla^{m-1} u)^2+\widetilde N_*(\nabla^{m-1} u_Q)^2)
.\end{multline*}
By assumption, and because $0<t<\ell(Q)/4$ in the region of integration, we have that
\begin{multline*}
\doublebar{\mathcal{A}_2^+(t\nabla^m u_Q)}_{L^2(\R^n)}^2+
\int_{(3/2)Q}\int_0^{\ell(Q)/4} \abs{\nabla^m (u-u_Q)(x,t)}^2t\,dt\,dx
\\\leq 
C\int_{3Q}( \widetilde N_*(\nabla^{m-1} u)^2)
+C\int_{4Q} \psi^2
.\end{multline*}
Choosing $\varphi^2=C\psi^2+C\widetilde N_*(\nabla^{m-1} u)^2$, $\arr u=\nabla^m u$ and $\arr u_Q=\nabla^m u_Q$, we may reduce to the general case.
%

We now turn to the general case.
Let $Q\subset\R^n$ be a cube. 
By definition of $\mathcal{A}_2^+$,
\begin{align*}\int_Q \mathcal{A}_2^+(t\,\arr u)(x)^2\,dx
&=
	\int_Q\int_0^\infty \int_{\abs{x-y}<t} \abs{\arr u(y,t)}^2\frac{1}{t^{n-1}}\,dy\,dt\,dx
.\end{align*}
We consider the cases $t>\ell(Q)/4$ and $t\leq \ell(Q)/4$ separately, so
\begin{align*}\int_Q \mathcal{A}_2^+(t\,\arr u)(x)^2\,dx
&\leq
	\int_Q\int_0^{\ell(Q)/4} \int_{\abs{x-y}<t} \abs{\arr u(y,t)}^2\frac{1}{t^{n-1}}\,dy\,dt\,dx
	\\&\qquad+
	\int_Q\int_{\ell(Q)/4}^\infty \int_{\abs{x-y}<t} \abs{\arr u(y,t)}^2\frac{1}{t^{n-1}}\,dy\,dt\,dx
.\end{align*}
The first term satisfies
\begin{multline*}
\int_Q\int_0^{\ell(Q)/4} \int_{\abs{x-y}<t} \abs{\arr u(y,t)}^2\frac{1}{t^{n-1}}\,dy\,dt\,dx
\\\leq
	2\int_Q\int_0^{\ell(Q)/4} \int_{\abs{x-y}<t} \abs{\arr u_Q(y,t)}^2\frac{1}{t^{n-1}}\,dy\,dt\,dx
	\\+
	2\int_Q\int_0^{\ell(Q)/4} \int_{\abs{x-y}<t} \abs{\arr u(y,t)- \arr u_Q(y,t)}^2\frac{1}{t^{n-1}}\,dy\,dt\,dx
.\end{multline*}
But
\begin{equation*}\int_Q\int_0^{\ell(Q)/4} \int_{\abs{x-y}<t} \abs{\arr u_Q(y,t)}^2\frac{1}{t^{n-1}}\,dy\,dt\,dx
\leq
\int_{\R^n} \mathcal{A}_2^+(t\arr u_Q)^2
\leq \int_{4Q} \varphi^2.\end{equation*}
We have that
\begin{multline*}\int_Q\int_0^{\ell(Q)/4} \int_{\abs{x-y}<t} \abs{\arr u(y,t)- \arr u_Q(y,t)}^2\frac{1}{t^{n-1}}\,dy\,dt\,dx
\\\leq C_n\int_{(3/2)Q} \int_0^{\ell(Q)/4} \abs{\arr u(y,t)- \arr u_Q(y,t)}^2\,t\,dy\,dt.\end{multline*}
By assumption the right hand side is bounded.
Thus,
\begin{align*}\int_Q \mathcal{A}_2^+(t\,\arr u)(x)^2\,dx
&\leq
	C \int_{4Q}\varphi^2
	+
	\int_Q\int_{\ell(Q)/4}^\infty \int_{\abs{x-y}<t} \abs{\arr u(y,t)}^2\frac{1}{t^{n-1}}\,dy\,dt\,dx
.\end{align*}

Suppose that $x\in Q$, that $t>0$, and that $\abs{x-y}<t$. Then $\dist(y,(3/2)Q)\leq \max(0,t-\ell(Q)/4)$, and so
\begin{multline*}\int_Q \int_{\ell(Q)/4}^\infty \int_{\abs{x-y}<t} \abs{\arr u(y,t)}^2\,\frac{1}{t^{n-1}}\,dy\,dt\,dx
\\\leq
\abs{Q}\int_{\ell(Q)/4}^\infty \int_{\dist(y,(3/2)Q)<t-\ell(Q)/4} \abs{\arr u(y,t)}^2\,\frac{1}{t^{n-1}}\,dy\,dt
.\end{multline*}
Let $\mathcal{G}$ be a grid of $(3N/2)^\dmnMinusOne$ cubes contained in $(3/2)Q$ with side-length $\ell(Q)/N$ and pairwise-disjoint interiors, for $N$ a large even integer to be chosen momentarily. Then 
\begin{multline*}
\abs{Q}\int_{\ell(Q)/4}^\infty \int_{\dist(y,(3/2)Q)<t-\ell(Q)/4} \abs{\arr u(y,t)}^2\,\frac{1}{t^{n-1}}\,dy\,dt
\\\leq
\abs{Q}
\sum_{R\in\mathcal{G}}
\int_{\ell(Q)/4}^\infty \int_{\dist(y,R)<t-\ell(Q)/4} \abs{\arr u(y,t)}^2\,\frac{1}{t^{n-1}}\,dy\,dt
.\end{multline*}
If $z\in R$ and $\dist(y,R)<t-\ell(Q)/4$, then $\abs{z-y}<t-\ell(Q)/4+\ell(R)\sqrt{n}=t+\ell(Q)(\sqrt{n}/N-1/4)$.
Choosing $N\geq 4\sqrt{n}$, we see that for any $z\in R$,
\begin{multline*}\int_{\ell(Q)/4}^\infty \int_{\dist(y,R)<t-\ell(Q)/4} \abs{\arr u(y,t)}^2\,\frac{1}{t^{n-1}}\,dy\,dt
\\\leq 
\int_{\ell(Q)/4}^\infty \int_{\abs{z-y}<t} \abs{\arr u(y,t)}^2\,\frac{1}{t^{n-1}}\,dy\,dt
\leq \mathcal{A}_2^+(t\,\arr u)(z)^2.
\end{multline*}
Averaging over all $z\in R$, we see that
\begin{multline*}
\abs{Q}
\sum_{R\in\mathcal{G}}
\int_{\ell(Q)/4}^\infty \int_{\dist(y,R)<t-\ell(Q)/4} \abs{\arr u(y,t)}^2\,\frac{1}{t^{n-1}}\,dy\,dt
\\\leq
	\abs{Q}\sum_{R\in\mathcal{G}} \biggl(\fint_R\mathcal{A}_2^+(t\,\arr u)(z)\,dz\biggr)^2
\leq
	\abs{Q} (3N/2)^n \biggl(\frac{\abs{2Q}}{\abs{R}}\fint_{2Q}\mathcal{A}_2^+(t\,\arr u)(z)\,dz\biggr)^2
\end{multline*}
and so
\begin{equation*}\int_Q \int_{\ell(Q)/4}^\infty \int_{\abs{x-y}<t} \abs{\arr u(y,t)}^2\,\frac{1}{t^{n-1}}\,dy\,dt\,dx
\leq
	C\abs{Q}\biggl(\fint_{2Q}\mathcal{A}_2^+(t\,\arr u)(z)\,dz\biggr)^2
.\end{equation*}

Thus,
\begin{align*}\int_Q \mathcal{A}_2^+(t\,\arr u)(x)^2\,dx
&\leq
	C \int_{4Q}\varphi^2
	+C\abs{Q}\biggl(\fint_{2Q}\mathcal{A}_2^+(t\,\arr u)(z)\,dz\biggr)^2
.\end{align*}
By Lemma~\ref{lem:iwaniec}, there is some $p>2$ depending on $n$ and $C$ such that
\begin{align*}\int_{\R^n} \mathcal{A}_2^+(t\,\arr u)(x)^p\,dx
&\leq
	C \int_{\R^n}\varphi^p
\end{align*}
as desired.
\end{proof}

\section{The single layer potential}\label{sec:S}

In this section we will establish the nontangential estimates (\ref{eqn:S:N:intro}--\ref{eqn:S:N:rough:intro}) and area integral estimates (\ref{eqn:S:lusin:intro}--\ref{eqn:S:lusin:rough:intro}) on the single layer potential (and modified single layer potential). 

We will begin (Sections~\ref{sec:S:L2} and~\ref{sec:S:variant}) by showing that $\s^L$ and $\s^L_\nabla$ are well defined operators from $L^2(\R^n)$ to $\dot W^2_{m,loc}(\R^\dmn_\pm)$ and $\dot W^2_{m-1,loc}(\R^\dmn_\pm)$, respectively, and recalling or establishing some bounds on $\s^L\arr g$ and $\s^L_\nabla\arr h$ in the cases $\arr g$, $\arr h\in L^2(\R^n)$. In particular, we will show that the boundary operators $\Tr_m^\pm\s^L$ and $\Tr_{m-1}^\pm\s^L_\nabla$ are bounded from $L^2(\R^n)$ to itself.

In Section~\ref{sec:S:trace}, we will show that if the order~$2m$ of the operator~$L$ is high enough, then the boundary operators $\Tr_m^\pm\s^L$ and $\Tr_{m-1}^\pm\s^L_\nabla$ are also bounded from $L^p(\R^n)$ to itself, for $p$ near but not necessarily equal to~$2$. In Section~\ref{sec:S:high} we will pass to the case of operators~$L$ of lower order, and finally in Section~\ref{sec:S:N} will pass from boundary estimates to nontangential (and area integral) estimates.

\subsection{$\s^L$ as an operator on $L^2(\R^n)$}
\label{sec:S:L2}

Recall from the definition~\eqref{dfn:S} that the single layer potential $\s^L$ was originally defined as an operator from $(\dot W\!A^2_{m-1,1/2}(\R^n))^*$ (or $\dot B^{2,2}_{-1/2}(\R^n)$) to $\dot W^2_m(\R^\dmn)$.
Suppose that $\arr g\in \dot B^{2,2}_{-1/2}(\R^n)\cap L^2(\R^n)$. Then by \cite[formula~(4.5)]{BarHM18}, we have that
\begin{align}
\label{eqn:S:L2}
\sup_{t\neq 0} \doublebar{\nabla^m \s^L\arr g(\,\cdot\,,t)}_{L^2(\R^n)} &\leq C\doublebar{\arr g}_{L^2(\R^n)}
.\end{align}
Because $\dot B^{2,2}_{-1/2}(\R^n)\cap L^2(\R^n)$ is dense in $L^2(\R^n)$, we have that $\s^L\arr g$ extends to an operator that is bounded from $L^2(\R^n)$ to $\dot W^2_{m}(\R^n\times(a,b))$ for any $-\infty<a<b<\infty$. 

We have some further properties of $\s^L\arr g$ for $\arr g\in L^2(\R^n)$. 

By Lemma~\ref{lem:slices} and the bound~\eqref{eqn:fundamental:far}, if $\arr g\in L^2(\R^n)$ is compactly supported then the integral in formula~\eqref{eqn:S:fundamental} for $\nabla^m \s^L\arr g(x,t)$ converges absolutely for almost every $(x,t)\in \R^\dmn_\pm$; by density the formula is valid for such~$\arr g$.

By density, we have that $L(\s^L\arr g)=0$  in the weak sense in $\R^\dmn_\pm$ for any $\arr g\in L^2(\R^n)$. 
By \cite[Theorem~5.3]{BarHM17pB}, and 
by the bounds~\eqref{eqn:S:lusin:-} and \eqref{eqn:S:L2}, 
\begin{align}
\label{eqn:S:limit:L2}
\lim_{t\to \pm \infty} \doublebar{\nabla^m \s^L\arr g(\,\cdot\,,t)}_{L^2(\R^n)}
&=0
\quad\text{for all }\arr g\in L^2(\R^n)
.\end{align}
Furthermore, 
the operators $\Tr_m^\pm\s^L$ 
are bounded from $L^2(\R^n)$ to itself 
and satisfy
\begin{align}
\label{eqn:S:boundary:L2}
\lim_{t\to 0^\pm} \doublebar{\nabla^m \s^L\arr g(\,\cdot\,,t)-\Tr_m^\pm\s^L\arr g}_{L^2(\R^n)}
&=0
\quad\text{for all }\arr g\in L^2(\R^n)
.\end{align}

\subsection{The modified single layer potential $\s^L_\nabla\arr h$}
\label{sec:S:variant}

The gradient $\nabla^{m-1}\s^L_\nabla$ of the modified single layer potential was defined by formula~\eqref{eqn:S:variant} as an integral operator. 
We begin this section by showing that there is a function $\s^L_\nabla\arr h$ in $\dot W^2_{m-1,loc}(\R^\dmn_\pm)$ whose gradient $\nabla^{m-1}\s^L_\nabla\arr h$ is given by formula~\eqref{eqn:S:variant}, and that $L(\s^L_\nabla\arr h)=0$ in $\R^\dmn_\pm$.

\begin{lem}\label{lem:S:variant}
Let $L$ be an operator of the form~\eqref{eqn:divergence} of order~$2m$ associated to $t$-independent coefficients~$\mat A$ that satisfy the bounds \eqref{eqn:elliptic:bounded} and~\eqref{eqn:elliptic}.

If $\abs\gamma=m-1$, and if $\arr h\in L^2(\R^n)$, then the integral in the definition~\eqref{eqn:S:variant} of $\partial^\gamma\s^L_\nabla\arr h(x,t)$ converges absolutely for almost every $(x,t)\in\R^\dmn_\pm$. Furthermore, if $K\subset\R^\dmn_\pm$ is compact, then $\partial^\gamma\s^L_\nabla$ is bounded $L^2(\R^n)\mapsto L^2(K)$.

If $\abs\beta=m$ and $\beta_\dmn\geq 1$, and if $h\in \dot B^{2,2}_{-1/2}(\R^n)$ or $h\in L^2(\R^n)$, then the function $\nabla^{m-1}\s^L_\nabla (h\arr e_\beta)$ given by formula~\eqref{eqn:S:variant} satisfies
\begin{equation}\label{eqn:S:S:vertical}\nabla^{m-1}\s^L_\nabla (h\arr e_\beta)(x,t) = -\nabla^{m-1}\partial_t\s^L(h\arr e_\zeta)(x,t)\quad\text{where }\beta=\zeta+\vec e_\dmn.\end{equation}

If $h\in L^2(\R^n)\cap \dot B^{2,2}_{1/2}(\R^n)$, and if $\beta_\dmn<\abs\beta=m$, then the gradient $\nabla^{m}\s^L_\nabla (h\arr e_\beta)$ of the function $\nabla^{m-1}\s^L_\nabla (h\arr e_\beta)$ given by formula~\eqref{eqn:S:variant} satisfies
\begin{equation}\label{eqn:S:S:horizontal}
\nabla^m\s^L_\nabla (h\arr e_\beta)(x,t) = -\nabla^m\s^L(\partial_{x_j} h\arr e_\zeta)(x,t)\quad\text{where }\beta=\zeta+\vec e_j\end{equation}
where $j$ is any number with $1\leq j\leq n$ and with $\vec e_j\leq \beta$.

Thus by density, if $\arr h\in L^2(\R^n)$, then there is a function $\s^L_\nabla\arr h\in \dot W^2_{m-1,loc}(\R^n)$ such that, if $\abs\gamma=m-1$, then $\partial^\gamma \s^L_\nabla\arr h$ is given by formula~\eqref{eqn:S:variant}. Furthermore, 
\begin{equation*}L(\s^L_\nabla\arr h)=0\quad\text{in}\quad\R^\dmn_\pm.\end{equation*}
\end{lem}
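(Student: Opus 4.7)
The plan is to verify each of the four assertions of the lemma in turn. The main tools are: the fact (from \eqref{eqn:fundamental:symmetric} and the defining property of $E^L$) that for fixed $(x,t) \in \R^\dmn_\pm$ the function $v_{x,t}(y,s) := \partial^\gamma_{x,t} E^L(x,t,y,s)$ is an $L^*$-solution in $(y,s) \in \R^\dmn \setminus \{(x,t)\}$; the fundamental solution decay \eqref{eqn:fundamental:far}; $t$-independence of $\mat A$, which manifests through Lemma~\ref{lem:slices} (applicable to $\mat A^*$ as well) and formula \eqref{eqn:fundamental:vertical}; and ordinary integration by parts.

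For the first assertion (absolute convergence and local $L^2$-boundedness of $\partial^\gamma \s^L_\nabla$), I would fix $(x,t) \in \R^\dmn_\pm$ and decompose $\R^n$ dyadically into annuli $A_j$ centered at $x$, together with a bounded piece near $x$. On each $A_j$, cover by cubes $Q$ of side length $\sim \dist(Q,x)/C$ and apply Lemma~\ref{lem:slices} in the $(y,s)$-variable to control $\int_Q |\nabla^m_{y,s} v_{x,t}(y,0)|^2\,dy$ by the spacetime integral $\int_{2Q}\int_{-\ell(Q)}^{\ell(Q)} |\nabla^m_{y,s} v_{x,t}(y,s)|^2\,ds\,dy$, which is then bounded via \eqref{eqn:fundamental:far} with $s=1$, $q=0$. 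Summing over $j$, geometric decay (supplied by the factor $(r/R)^\varepsilon$) combined with Cauchy–Schwarz gives absolute convergence of \eqref{eqn:S:variant} at a.e.~$(x,t)$. Integrating the same kernel estimate over $(x,t) \in K$ for compact $K \subset \R^\dmn_\pm$ shows $\int_K \int_{\R^n} |\partial^\gamma_{x,t} \partial^\beta_{y,s} E^L(x,t,y,0)|^2\,dy\,dx\,dt < \infty$, which by Cauchy–Schwarz on the kernel gives the claimed $L^2(\R^n) \to L^2(K)$ bound.

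For the two identities: in \eqref{eqn:S:S:vertical}, with $\beta = \zeta + \vec e_\dmn$, I write $\partial^\beta_{y,s} E^L = \partial_s\partial^\zeta_{y,s} E^L = -\partial_t \partial^\zeta_{y,s} E^L$ using \eqref{eqn:fundamental:vertical}, substitute into \eqref{eqn:S:variant}, move $\partial_t$ outside the integral (justified by interior $L$-regularity in the $(x,t)$ variable, as $(y,0)$ stays away from $(x,t) \in \R^\dmn_\pm$), and recognize the resulting integrand via \eqref{eqn:S:fundamental} as that of $\partial^{\gamma+\vec e_\dmn}\s^L(h\arr e_\zeta)$. For \eqref{eqn:S:S:horizontal}, with $\beta = \zeta + \vec e_j$ and $1 \leq j \leq n$, I write $\partial^\beta_{y,s} E^L = \partial_{y_j} \partial^\zeta_{y,s} E^L$ and integrate by parts in $y_j$; the boundary terms at infinity vanish thanks to the decay in \eqref{eqn:fundamental:far} and $\partial_j h \in L^2 \cap \dot B^{2,2}_{-1/2}$, and the result is recognized via \eqref{eqn:S:fundamental} as $-\partial^{\gamma'}\s^L(\partial_j h\arr e_\zeta)$ for $|\gamma'|=m$. (Consistency when both decompositions $\beta=\zeta_1+\vec e_\dmn=\zeta_2+\vec e_j$ apply is a direct check using \eqref{eqn:fundamental:vertical} and integration by parts.)

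For the final assertion, whenever $\beta_\dmn \geq 1$ the identity \eqref{eqn:S:S:vertical} exhibits the prescribed $(m-1)$-derivatives as those of the $\dot W^2_{m-1,loc}$ function $-\partial_t\s^L(h\arr e_\zeta)$, which is an $L$-solution in $\R^\dmn_\pm$ by $t$-independence of $\mat A$ (so $\partial_t$ commutes with $L$). For purely horizontal $\beta$, \eqref{eqn:S:S:horizontal} identifies $\s^L_\nabla(h\arr e_\beta)$ modulo polynomials of degree $m-1$ with $-\s^L(\partial_j h\arr e_\zeta)$ whenever $h \in L^2\cap \dot B^{2,2}_{1/2}$; density of this subspace in $L^2$ combined with the $L^2(\R^n) \to L^2(K)$ bound from the first step extends the definition and the equation $L\s^L_\nabla(h\arr e_\beta)=0$ to all $h \in L^2$. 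Finally, that the array $\{\partial^\gamma \s^L_\nabla \arr h\}_{|\gamma|=m-1}$ is actually the gradient of order $m-1$ of a single locally $\dot W^2_{m-1}$ function reduces to verifying the compatibility $\partial_{x_i}(\partial^\gamma \s^L_\nabla \arr h) = \partial_{x_j}(\partial^{\gamma'} \s^L_\nabla \arr h)$ whenever $\gamma+\vec e_i = \gamma'+\vec e_j$, which follows by differentiating under the integral in \eqref{eqn:S:variant}. The main obstacle is the purely horizontal case, where $\s^L_\nabla(h\arr e_\beta)$ is not a $t$-derivative of a known potential; the construction for general $L^2$ data is forced to rely on the direct kernel estimate of step one, so the technical crux is carefully managing the dyadic kernel bounds in that first step without invoking any identity that is only proved later.
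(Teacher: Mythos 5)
Your proposal is correct and follows essentially the same route as the paper: the first assertion comes from Lemma~\ref{lem:slices} in the $(y,s)$-variable combined with the decay bound~\eqref{eqn:fundamental:far}/\eqref{eqn:fundamental:far:low} (the paper packages this as the averaged estimate~\eqref{eqn:E:cube} rather than a pointwise-in-$(x,t)$ kernel bound, which avoids the technicality of using~\eqref{eqn:fundamental:far} — an integrated bound in both variables — at a single $(x,t)$, but the content is the same); the vertical identity~\eqref{eqn:S:S:vertical} is formula~\eqref{eqn:S:fundamental} plus~\eqref{eqn:fundamental:vertical} followed by an $L^2\to L^2_{loc}$ density argument (there is no need to "move $\partial_t$ outside the integral" — since $|\gamma+\vec e_\dmn|=m$, formula~\eqref{eqn:S:fundamental} already gives the expression directly); and~\eqref{eqn:S:S:horizontal} is the same integration by parts in $y_j$, which the paper performs starting from the $\s^L(\partial_j h\arr e_\zeta)$ side and you from the $\s^L_\nabla$ side, both justified by absolute convergence of the two integrals. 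One small slip: you wrote $\partial_j h\in L^2\cap\dot B^{2,2}_{-1/2}$, but under the hypothesis $h\in L^2\cap\dot B^{2,2}_{1/2}$ one gets $\partial_j h\in\dot B^{2,2}_{-1/2}$ only; it is the membership $h\in L^2$ that makes the $\s^L_\nabla$-side integral absolutely convergent, and that is what justifies the integration by parts.
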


\begin{proof} 
By Lemma~\ref{lem:slices} and the bound~\eqref{eqn:fundamental:far} or~\eqref{eqn:fundamental:far:low}, if $Q\subset\R^n$ is a cube of side length $\ell>0$, then
\begin{multline}
\label{eqn:E:cube}
\int_{Q} \int_\ell^{2\ell} \int_{\R^n} \abs{\partial_{x,t}^\gamma\nabla_{y,s}^m E^L(x,t,y,0)}^2\,dy\,dt\,dx 
\\+ \int_{Q} \int_{-2\ell}^{-\ell} \int_{\R^n} \abs{\partial_{x,t}^\gamma\nabla_{y,s}^m E^L(x,t,y,0)}^2\,dy\,dt\,dx
\leq C\ell
.\end{multline}
In particular, $\partial_{x,t}^\gamma\nabla_{y,s}^m E^L(x,t,\,\cdot\,,0)\in L^2(\R^n)$ for almost every $(x,t)\in\R^\dmn_\pm$. A straightforward covering argument establishes the local boundedness of $\partial^\gamma \s^L_\nabla$.

We now turn to formula~\eqref{eqn:S:S:vertical}.
Choose some $\beta$ with $\abs\beta=m$ and $\beta_\dmn\geq 1$.
Let $\zeta+\vec e_\dmn=\beta$. If $h\in \dot B^{2,2}_{-1/2}(\R^n)$, then by formula~\eqref{eqn:S:fundamental} the function $-\partial_t\s^L(h\arr e_\zeta)$ satisfies
\begin{align*}
-\partial^\gamma \partial_t\s^L(h\arr e_\zeta)(x,t)
&=-\int_{\R^n} \partial_{x,t}^\gamma \partial_t\partial_{y,s}^\zeta E^L(x,t,y,0)\,h(y)\,dy
.\end{align*}
and by formula~\eqref{eqn:fundamental:vertical}
\begin{align*}
-\partial^\gamma \partial_t\s^L(h\arr e_\zeta)(x,t)
&=\int_{\R^n} \partial_{x,t}^\gamma \partial_{y,s}^\beta E^L(x,t,y,0)\,h(y)\,dy
.\end{align*}
Thus, formula~\eqref{eqn:S:S:vertical} is valid for all $h\in \dot B^{2,2}_{-1/2}(\R^n)$; because $\partial^\gamma\s^L_\nabla$ and $\nabla^m\s^L$ are both bounded $L^2(\R^n)\mapsto L^2_{loc}(\R^\dmn_\pm)$, by density formula~\eqref{eqn:S:S:vertical} is valid for all $h\in L^2(\R^n)$.

Finally, we turn to formula~\eqref{eqn:S:S:horizontal}.
If $\beta_\dmn<\abs\beta=m$, then there is some $j$ with $1\leq j\leq n$ such that $\vec e_j\leq \beta$. Let $\zeta+\vec e_j=\beta$. If $h\in \dot B^{2,2}_{1/2}(\R^n)$, then the (formal) derivative $\partial_{j}h$ lies in $\dot B^{2,2}_{-1/2}(\R^n)$, and     
\begin{align*}
-\partial^\alpha \s^L(\partial_{j}h\arr e_\zeta)(x,t)
&=-\int_{\R^n} \partial_{x,t}^\alpha \partial_{y,s}^\zeta E^L(x,t,y,0)\,\partial_{y_j}h(y)\,dy
.\end{align*}
Recall from the remarks following formula~\eqref{eqn:S:fundamental} that for almost every $(x,t)\in\R^\dmn_\pm$, the right hand side converges provided $h\in \dot B^{2,2}_{1/2}(\R^n)$.
But if $h\in L^2(\R^n)$, then 
\begin{align*}
\int_{\R^n} \partial_{x,t}^\alpha \partial_{y,s}^\beta E^L(x,t,y,0)\,h(y)\,dy
\end{align*}
converges absolutely for almost every $(x,t)\in\R^\dmn_\pm$. Thus, we may integrate by parts to see that
\begin{align*}
-\partial^\alpha \s^L(\partial_{j}h\arr e_\zeta)(x,t)
&=\int_{\R^n} \partial_{x,t}^\alpha \partial_{y,s}^\beta E^L(x,t,y,0)\,h(y)\,dy
.\end{align*}
If $\abs\gamma=m-1$ and $1\leq k\leq \dmn$, then
\begin{align*}
-\partial_k\partial^\gamma \s^L(\partial_{j}h\arr e_\zeta)(x,t)
&=\partial_k\int_{\R^n} \partial_{x,t}^\gamma \partial_{y,s}^\beta E^L(x,t,y,0)\,h(y)\,dy
.\end{align*}
Thus, formula~\eqref{eqn:S:S:horizontal} is valid. This completes the proof.
\end{proof}

We now establish bounds similar to the bounds~\eqref{eqn:S:L2}, \eqref{eqn:S:limit:L2}, and~\eqref{eqn:S:boundary:L2}.

\begin{lem}
Let $L$ be an operator of the form~\eqref{eqn:divergence} of order~$2m$ associated to $t$-independent coefficients~$\mat A$ that satisfy the bounds \eqref{eqn:elliptic:bounded} and~\eqref{eqn:elliptic}.

For all $\arr h\in L^2(\R^n)$, we have that
\begin{align}
\label{eqn:S:rough:L2}
\sup_{t\neq 0} \doublebar{\nabla^{m-1} \s^L_\nabla \arr h(\,\cdot\,,t)}_{L^2(\R^n)} &\leq C\doublebar{\arr h}_{L^2(\R^n)}
,\\
\lim_{t\to\pm\infty} \doublebar{\nabla^{m-1} \s^L_\nabla \arr h(\,\cdot\,,t)}_{L^2(\R^n)} &=0
.\end{align}
Furthermore, the boundary operator $\Tr_{m-1}^\pm \s^L_\nabla$ 
is bounded from $L^2(\R^n)$ to itself 
and satisfies
\begin{align}
\label{eqn:S:rough:boundary:L2}
\lim_{t\to 0^\pm} \doublebar{\nabla^{m-1} \s^L_\nabla \arr h(\,\cdot\,,t)-\Tr_{m-1}^\pm \s^L_\nabla\arr h}_{L^2(\R^n)} 
&=0
\end{align}
for all $\arr h\in L^2(\R^n)$.
\end{lem}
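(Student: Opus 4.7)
The three conclusions of this lemma are the analogues for $\s^L_\nabla$ of the bounds~\eqref{eqn:S:L2}, \eqref{eqn:S:limit:L2}, and~\eqref{eqn:S:boundary:L2} for $\s^L$. I plan to reduce to these by splitting $\arr h=\arr h^{\uparrow}+\arr h^{\rightarrow}$ componentwise, where $h^{\uparrow}_\beta=h_\beta$ when $\beta_\dmn\geq 1$ and $h^{\rightarrow}_\beta=h_\beta$ when $\beta_\dmn=0$, and handling each piece via one of the identities of Lemma~\ref{lem:S:variant}.

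For $\arr h^{\uparrow}$, the reduction is immediate: for each $\beta$ with $\beta_\dmn\geq 1$, formula~\eqref{eqn:S:S:vertical} identifies $\nabla^{m-1}\s^L_\nabla(h_\beta\arr e_\beta)$ with $-\nabla^{m-1}\partial_t\s^L(h_\beta\arr e_{\beta-\vec e_\dmn})$, and $\nabla^{m-1}\partial_t$ is one of the families of components of $\nabla^m$. All three statements for this piece then follow at once from~\eqref{eqn:S:L2}, \eqref{eqn:S:limit:L2}, and~\eqref{eqn:S:boundary:L2}, with $\Tr_{m-1}^\pm\s^L_\nabla(h_\beta\arr e_\beta)$ identified with the appropriate components of $-\Tr_m^\pm\s^L(h_\beta\arr e_{\beta-\vec e_\dmn})$.

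For $\arr h^{\rightarrow}$, formula~\eqref{eqn:S:S:vertical} is unavailable and I will use a duality argument. Combining the integral representation~\eqref{eqn:S:variant} with the symmetry~\eqref{eqn:fundamental:symmetric} of the fundamental solution and the $t$-shift invariance~\eqref{eqn:fundamental:shift} (which is available since $\mat A$ is $t$-independent), one verifies for every $\phi\in C^\infty_c(\R^n)$ that
\begin{equation*}
\int_{\R^n}\overline{\phi(x)}\,\partial^\gamma\s^L_\nabla(h_\beta\arr e_\beta)(x,t)\,dx=\int_{\R^n} h_\beta(y)\,\overline{\partial^\gamma\s^{L^*}_\nabla(\phi\arr e_\beta)(y,-t)}\,dy.
\end{equation*}
For $\phi$ in the dense subspace $L^2\cap\dot B^{2,2}_{1/2}$, formula~\eqref{eqn:S:S:horizontal} applied to $\s^{L^*}_\nabla$ then gives $\nabla^m\s^{L^*}_\nabla(\phi\arr e_\beta)=-\nabla^m\s^{L^*}(\partial_j\phi\arr e_{\beta-\vec e_j})$ for some $j\in\{1,\dots,n\}$ with $\beta_j\geq 1$, and~\eqref{eqn:S:L2} controls its right hand side by $C\doublebar{\partial_j\phi}_{L^2(\R^n)}$.

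\textbf{Main obstacle.} The key difficulty is to bridge the gap between this $\nabla^m$ bound on $\s^{L^*}_\nabla(\phi\arr e_\beta)$ scaled by $\doublebar{\partial_j\phi}_{L^2}$ and the $\nabla^{m-1}$ bound scaled by $\doublebar{\phi}_{L^2}$ that the duality requires. I plan to close this gap by integrating by parts in the $y_j$ variable on the right hand side of the duality identity, moving the $j$-derivative from $\phi$ back onto $h_\beta$ via the representation~\eqref{eqn:S:variant}, with absolute convergence justified by the area integral estimate~\eqref{eqn:S:lusin:rough:-} together with the local boundedness in Lemma~\ref{lem:S:variant}; taking the supremum over $\phi$ with $\doublebar{\phi}_{L^2}\leq 1$ then yields the slicewise $L^2$ bound. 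The limits at $t\to\pm\infty$ and the existence of the strong boundary trace $\Tr_{m-1}^\pm\s^L_\nabla$ follow by the same decomposition, using~\eqref{eqn:S:limit:L2} and~\eqref{eqn:S:boundary:L2} on $\arr h^{\uparrow}$, and a dominated-convergence argument based on~\eqref{eqn:S:lusin:rough:-} on $\arr h^{\rightarrow}$.
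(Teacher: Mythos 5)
The split of $\arr h$ into the piece with $\beta_\dmn\geq 1$ (handled by~\eqref{eqn:S:S:vertical} and the bounds~(\ref{eqn:S:L2}--\ref{eqn:S:boundary:L2})) is exactly the first step of the paper's proof and is fine. The problem lies in your treatment of the horizontal piece $\arr h^{\rightarrow}$.

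\textbf{The duality identity is wrong.} You claim
$\int_{\R^n}\overline{\phi}\,\partial^\gamma\s^L_\nabla(h_\beta\arr e_\beta)(\cdot,t)
=\int_{\R^n} h_\beta\,\overline{\partial^\gamma\s^{L^*}_\nabla(\phi\arr e_\beta)(\cdot,-t)}$.
Tracking the indices through the symmetry~\eqref{eqn:fundamental:symmetric}, the transpose of $\s^L_\nabla$ --- whose kernel $\partial^\gamma_{x,t}\partial^\beta_{y,s}E^L$ has $m-1$ derivatives in the first slot and $m$ in the second --- is an operator with $m$ derivatives in the first slot and $m-1$ in the second, i.e. $\s^{L^*}$, not $\s^{L^*}_\nabla$. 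The correct relation is the one the paper records as~\eqref{eqn:S:dual}: with $\arr g=\phi\arr e_\gamma$, the right-hand side is $\int h_\beta\,\overline{\partial^\beta\s^{L^*}(\phi\arr e_\gamma)(\cdot,-t)}$, with the roles of $\gamma$ and $\beta$ exchanged. Your version, and the whole ``main obstacle'' paragraph built on it (applying~\eqref{eqn:S:S:horizontal} to $\nabla^m\s^{L^*}_\nabla(\phi\arr e_\beta)$ and then ``integrating by parts to move the $j$-derivative back onto $h_\beta$''), rests on this misidentification; and in any case moving a derivative onto $h_\beta\in L^2(\R^n)$ only produces a distribution, not something one can estimate slice-wise. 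Note that with the \emph{correct} duality, the uniform bound~\eqref{eqn:S:rough:L2} and boundedness of $\Tr_{m-1}^\pm\s^L_\nabla$ on $L^2$ drop out of~\eqref{eqn:S:L2} with no gap at all to bridge.

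\textbf{Strong limits are not reached.} Even granting~\eqref{eqn:S:dual}, duality only gives \emph{weak} $L^2$-convergence of $\nabla^{m-1}\s^L_\nabla\arr h(\cdot,t)$ as $t\to 0^\pm$ and as $t\to\pm\infty$; the lemma asserts norm convergence. Your suggestion of ``a dominated-convergence argument based on~\eqref{eqn:S:lusin:rough:-}'' does not bridge this: the area-integral bound~\eqref{eqn:S:lusin:rough:-} controls a weighted $L^2$ average over Whitney regions and does not dominate individual slices, so there is no dominating function to invoke. The paper instead cites the Fatou-type theorem \cite[Theorem~5.1]{BarHM17pB}, which is exactly the tool that upgrades the area-integral estimate to existence of strong $L^2$ traces and limits; it produces the conclusions only modulo polynomials $P_\pm$ of degree $m-1$, which the paper then kills off by a separate growing-cube argument using~\eqref{eqn:E:cube} and Lemma~\ref{lem:slices} --- a step absent from your outline. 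As written, the proposal has a genuine gap on both points.
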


\begin{proof}
By formula~\eqref{eqn:S:S:vertical}, if $\arr h=h\arr e_\beta$ for some $\beta$ with $\beta_\dmn\geq 1$, then the theorem follows from the bounds~(\ref{eqn:S:L2}--\ref{eqn:S:boundary:L2}).

More generally, by \cite[Theorem~5.1]{BarHM17pB} and the bound~\eqref{eqn:S:lusin:rough:-}, if $\arr h\in L^2(\R^n)$, then there are two polynomials $P_\pm$ of degree $m-1$ that satisfy
\begin{align*}
\sup_{\pm t> 0} \doublebar{\nabla^{m-1} \s^L_\nabla\arr h(\,\cdot\,,t)-\nabla^{m-1}P_\pm}_{L^2(\R^n)} &\leq C\doublebar{\arr h}_{L^2(\R^n)}
,\\
\doublebar{\Tr_{m-1}^\pm \s^L_\nabla\arr h-\nabla^{m-1}P_\pm}_{L^2(\R^n)} &\leq C\doublebar{\arr h}_{L^2(\R^n)}
,\\
\lim_{t\to \pm\infty} \doublebar{\nabla^{m-1} \s^L_\nabla\arr h(\,\cdot\,,t)-\nabla^{m-1}P_\pm}_{L^2(\R^n)} &=0
,\\
\lim_{t\to 0^\pm} \doublebar{\nabla^{m-1} \s^L_\nabla \arr h(\,\cdot\,,t)-\Tr_{m-1}^\pm \s^L_\nabla\arr h}_{L^2(\R^n)} 
&=0
.\end{align*}
We need only show that $\nabla^{m-1}P_\pm=0$.

We will consider only $P_+$.
Let $Q$ be a cube in $\R^n$ of side length $t$. Then
\begin{align*}\abs{\nabla^{m-1}P_+}^2 
&= \fint_Q \abs{\nabla^{m-1}P_+}^2
\\&\leq 2\fint_Q \abs{\nabla^{m-1}\s^L_\nabla\arr h(x,t)-\nabla^{m-1} P_+}^2\,dx + 2\fint_Q \abs{\nabla^{m-1}\s^L_\nabla\arr h(x,t)}^2\,dx.\end{align*}
By the given bound on ${\nabla^{m-1}\s^L_\nabla\arr h(x,t)-\nabla^{m-1} P_+}$,
\begin{align*}\abs{\nabla^{m-1}P_+}^2 
&\leq \frac{C}{t^n}\doublebar{\arr h}_{L^2(\R^n)}^2 + 2\fint_Q \abs{\nabla^{m-1}\s^L_\nabla\arr h(x,t)}^2\,dx.\end{align*}
By the bound~\eqref{eqn:E:cube} and Lemma~\ref{lem:slices},
\begin{align*}\abs{\nabla^{m-1}P_+}^2 
&\leq \frac{C}{t^n}\doublebar{\arr h}_{L^2(\R^n)}.\end{align*}
Letting $t\to\infty$, we see that $\nabla^{m-1}P_+=0$, as desired.
\end{proof}

\subsection{Boundary values and operators of high order}
\label{sec:S:trace}

In this section, we will show that if $2m\geq n+3$, then the boundary operators $\Tr_m^\pm\s^L$ and 
$\Tr_{m-1}^\pm \s^L_\nabla$ are bounded on $L^p(\R^n)$ for some values of $p\neq 2$. We will also establish some preliminary nontangential estimates. In Section~\ref{sec:S:high} we will show how to generalize to operators of low order, and in Section~\ref{sec:S:N} we will pass to nontangential and area integral estimates.

We begin with the purely vertical derivatives.

\begin{lem} \label{lem:p:range:vertical}
Let $L$ be an operator of the form~\eqref{eqn:divergence} of order~$2m$, with $2m\geq n+3$, associated to $t$-independent coefficients~$\mat A$ that satisfy the bounds \eqref{eqn:elliptic:bounded} and~\eqref{eqn:elliptic}. Suppose that $2\leq p<\infty$.
Then for all $\arr g\in L^p(\R^n)\cap L^2(\R^n)$ and all $\arr h\in L^p(\R^n)\cap L^2(\R^n)$, we have that
\begin{align*}
\doublebar{\Trace^\pm \partial_t^{m-1} \s^{L}_\nabla \arr h}_{L^p(\R^n)}\leq C\doublebar{\arr h}_{L^p(\R^n)}
,\\
\doublebar{\Trace^\pm \partial_t^{m} \s^{L} \arr g}_{L^p(\R^n)}\leq C\doublebar{\arr g}_{L^p(\R^n)}
.\end{align*}
\end{lem}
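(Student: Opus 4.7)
The plan is to identify the two boundary operators as Calder\'on--Zygmund singular integral operators on $\R^n$ and then appeal to standard $L^p$-theory. Their $L^2$-boundedness is already in hand: Sections~\ref{sec:S:L2} and~\ref{sec:S:variant} give that $\Tr_m^\pm\s^L$ and $\Tr_{m-1}^\pm\s^L_\nabla$ are bounded $L^2(\R^n)\to L^2(\R^n)$, and $\Trace^\pm\partial_t^m\s^L$ and $\Trace^\pm\partial_t^{m-1}\s^L_\nabla$ are components of these trace operators.

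First, I would represent both boundary operators as principal value integral operators. Using formulas~\eqref{eqn:S:fundamental} and~\eqref{eqn:S:variant} the relevant kernels are $\partial_t^m\partial_{y,s}^\zeta E^L(x,t,y,0)$ with $\abs\zeta=m-1$ (for $\partial_t^m\s^L\arr g$) and $\partial_t^{m-1}\partial_{y,s}^\beta E^L(x,t,y,0)$ with $\abs\beta=m$ (for $\partial_t^{m-1}\s^L_\nabla\arr h$), both of order $2m-1$ in $E^L$. Applying the $t$-independence identity~\eqref{eqn:fundamental:vertical} I would trade every $\partial_t$ for a $-\partial_s$, so that the kernels become pure $(y,s)$-derivatives of $E^L$ evaluated at $t\to 0^\pm$, $s=0$.

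Second, I would establish Calder\'on--Zygmund size and smoothness estimates on the resulting boundary kernels, namely
\begin{equation*}
\abs{K(x,y)}\leq \frac{C}{\abs{x-y}^n},\qquad \abs{\nabla_x K(x,y)}+\abs{\nabla_y K(x,y)}\leq \frac{C}{\abs{x-y}^{n+1}}.
\end{equation*}
Here the condition $2m\geq n+3$ enters in an essential way. By $t$-independence of $\mat A$, the function $(y,s)\mapsto \partial_s^j E^L(x,t,y,s)$ solves $L^*$ in $(y,s)$ away from $(x,t)$ for every $j\geq 0$. Starting from the integral estimate~\eqref{eqn:fundamental:far}, I would combine the Caccioppoli inequality (Lemma~\ref{lem:Caccioppoli}), the reverse-H\"older improvement~\eqref{eqn:Meyers:lower}, and the pointwise Meyers estimate~\eqref{eqn:Meyers:lowest} applied to $\partial_s^j E^L(x,t,\cdot,\cdot)$ in balls of scale comparable to $\abs{x-y}$, in order to bootstrap the $L^2$ input into pointwise control of the $(2m-1)$th order derivative of $E^L$ away from the singularity. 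Smoothness estimates are obtained by the same procedure with one extra derivative.

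Third, given both $L^2$-boundedness and the CZ kernel estimates, standard Calder\'on--Zygmund theory extends the operators to $L^p(\R^n)\to L^p(\R^n)$ for all $1<p<\infty$, covering in particular the stated range $2\leq p<\infty$.

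The hard part will be the second step. The pointwise Meyers bound~\eqref{eqn:Meyers:lowest} controls pointwise only derivatives of order at most $m-(n+1)/2$ of a solution. Under $2m\geq n+3$ this delivers pointwise Lipschitz control, i.e.\ pointwise bounds on $\nabla u$, which is considerably less than the roughly $\nabla^{m-1}$ pointwise control one would need on $\partial_s^m E^L(x,t,\cdot,\cdot)$ to read off the kernel bound directly. Closing this gap requires an iteration combining Caccioppoli reductions (trading derivatives for powers of the separation scale), reverse-H\"older improvements, and pointwise Meyers, carefully carried out on balls well-separated from the singularity. The assumption $2m\geq n+3$ is precisely the margin that makes this iteration close.
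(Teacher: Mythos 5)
Your plan is to establish full pointwise Calder\'on--Zygmund kernel estimates $\abs{K(x,y)}\leq C\abs{x-y}^{-n}$ and $\abs{\nabla K(x,y)}\leq C\abs{x-y}^{-n-1}$ on the boundary kernel, and you correctly flag at the end that there is a gap between what Meyers' inequality~\eqref{eqn:Meyers:lowest} delivers (pointwise control of $\nabla^j u$ for a solution $u$ only when $j\leq m-(n+1)/2$) and what a pointwise bound on the full $(2m-1)$\textsuperscript{st}-order derivative of $E^L$ would require. However, your assertion that ``the assumption $2m\geq n+3$ is precisely the margin that makes this iteration close'' is where the proposal breaks down: the iteration does not close. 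The bound~\eqref{eqn:fundamental:far} gives only local $L^2$ control of $\nabla_X^m\nabla_Y^m E^L$, and starting from there Meyers can deliver \emph{pointwise} control of at most $m-(n+1)/2$ derivatives in $Y$ of the solution $\partial_X^\alpha E^L(\,\cdot\,,\,\cdot\,,Y)$ (and symmetrically in $X$). Your kernel involves $m$ derivatives in $(x,t)$ and $m-1$ in $(y,s)$, which exceeds the Meyers threshold in both variable sets for all $n\geq 2$, and Caccioppoli only trades derivatives in one direction. No amount of iteration recovers the missing $(n+1)/2-1$ orders, and $2m\geq n+3$ only guarantees $m-(n+1)/2\geq 1$ (one pointwise derivative of a solution), not $m-(n+1)/2\geq m-1$.

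The paper's proof sidesteps this obstacle entirely: it never claims pointwise size or smoothness estimates on $K$. It instead (i) reduces to the modified single layer via~\eqref{eqn:S:S:vertical}, (ii) dualizes via~\eqref{eqn:S:dual} so that it suffices to prove $L^q$ bounds, $1<q\leq 2$, for the adjoint operator $T_\alpha g=\Trace^\mp\partial^\alpha\s^{L^*}(g\arr e_\perp)$, and (iii) proves a weak-$(1,1)$ estimate for $T_\alpha$ via a Calder\'on--Zygmund decomposition, using only the \emph{H\"ormander integral smoothness condition} $\int_{\R^n\setminus 8Q_i}\abs{T_\alpha u_i}\leq C\int_{Q_i}\abs{u_i}$ rather than pointwise kernel bounds. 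That condition is established through a mean-value estimate on the solution $w=\partial_{x,t}^\alpha\partial_s^{m-1}E^{L^*}(x,t,\,\cdot\,,\,\cdot\,)$: one needs only that $\nabla_{y,s}w$ be continuous — \emph{this} is what $2m\geq n+3$ buys, since $\nabla w$ is a first-order derivative of the solution $w$, and $1\leq m-(n+1)/2$ — after which $\abs{\nabla_{y,s}w}$ is left under the integral sign, and its annulus-averaged $L^1$ norm is controlled by combining Lemma~\ref{lem:slices}, H\"older, and~\eqref{eqn:fundamental:far}. Thus the paper's hypothesis is used for a one-derivative continuity statement feeding a mean-value inequality, not for a pointwise kernel bound; the weak-$(1,1)$ plus $L^2$ plus interpolation then gives $1<q\leq 2$ for the adjoint, hence $2\leq p<\infty$ for the original operators, which is exactly what the lemma claims.
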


\begin{proof}
By  the bounds~\eqref{eqn:S:boundary:L2} and~\eqref{eqn:S:rough:boundary:L2}, we have that $\Trace^\pm \partial_t^{m-1} \s^{L}_\nabla \arr h$ and $\Trace^\pm \partial_t^{m} \s^{L} \arr g$ do exist as $L^2$ functions.
The bound on $\s^{L}$ follows from the bound on $\s^{L}_\nabla$ by formula~\eqref{eqn:S:S:vertical}.
By formula~\eqref{eqn:S:fundamental} for $\s^{L^*}$, the definition~\eqref{eqn:S:variant} of~$\s^{L}_\nabla$, and the symmetry relations~\eqref{eqn:fundamental:symmetric} and~\eqref{eqn:fundamental:shift} for the fundamental solution,
we have the duality relation
\begin{equation}
\label{eqn:S:dual}
\langle \arr g, \nabla^{m-1}\s_\nabla^{L}\arr h(\,\cdot\,,t)\rangle_{\R^n} = 
\langle \nabla^m \s^{L^*}\arr g(\,\cdot\,,-t),\arr h\rangle_{\R^n}
\end{equation}
for all $t\neq 0$. Taking limits, we see that it suffices to show that the bound
\begin{equation*}\doublebar{\Tr_m^\mp\s^{L^*}(g\arr e_\perp)(\,\cdot\,,t)}_{L^q(\R^n)}\leq C\doublebar{g}_{L^q(\R^n)}\end{equation*}
is valid for all $1<q\leq 2$. Here $\arr e_\perp=\arr e_{(m-1)\vec e_\dmn}$, so $\langle \arr e_\perp,\nabla^{m-1}\varphi\rangle=\partial_\dmn^{m-1} \varphi$ for all functions $\varphi$ with weak derivatives of order up to~$m-1$.

Let $\abs\alpha=m$, and let
\begin{equation*}T_\alpha g=\Trace^\mp\partial^\alpha\s^{L^*}(g\arr e_\perp).\end{equation*}
Again by formula~\eqref{eqn:S:boundary:L2}, $T_\alpha$ is a well-defined, bounded operator on $L^2(\R^n)$.

We now show that $T_\alpha$
satisfies a {weak} bound on $L^1(\R^n)$; by interpolation this operator satisfies a strong bound on $L^q(\R^n)$ for any $q$ in the range $1<q<2$. 

Let $g\in L^1(\R^n)$.
Fix some number $\mu>0$. 
We seek to show that 
\begin{equation*}\abs{\{x:\abs{T_\alpha g(x)}>\mu\}}<\frac{C\doublebar{g}_{L^1(\R^n)}}{\mu}.\end{equation*}

We apply a standard Calder\'on-Zygmund decomposition to~$g$. That is, there exists a collection $\{Q_i\}$ of closed cubes with pairwise-disjoint interiors, a bounded function $s$, and unbounded functions $u_i$ such that
\begin{equation*}g = s+\sum_i u_i,\end{equation*}
such that each $u_i$ is supported in $Q_i$, and such that the following bounds are valid:
\begin{align*}
\doublebar{s}_{L^\infty(\R^n)}&\leq \mu
,\qquad
\int_{Q_i} u_i = 0
,\qquad
\int_{Q_i} \abs{u_i} \leq 2\mu \abs{Q_i}
,\qquad
\sum_i \abs{Q_i} \leq \frac{2^n}{\mu} \int_{\R^n} \abs{g}
.\end{align*}

As usual, if $\abs{T_\alpha g(x)}>\mu$ then either $\abs{T_\alpha s(x)}>\mu/2$ or $\abs{T_\alpha u(x)}>\mu/2$, where $u=\sum_i u_i$.
Notice that
\begin{equation*}\doublebar{s}_{L^2(\R^n)} \leq \biggl( \int_{\R^n\setminus \cup_i Q_i} \abs{s}^2 + \sum_i \abs{Q_i} \mu^2 \biggr)^{1/2}.\end{equation*} 
For almost every $x\notin \cup_i Q_i$, we have that $s(x)=g(x)$ and $\abs{s(x)}\leq \mu$; thus
\begin{equation*}\doublebar{s}_{L^2(\R^n)} \leq \biggl( \int_{\R^n\setminus \cup_i Q_i} \abs{g}\mu + \sum_i \abs{Q_i} \mu^2 \biggr)^{1/2}
\leq
C \mu^{1/2} \doublebar{g}_{L^1(\R^n)}^{1/2}
.\end{equation*} 
Applying boundedness of $T_\alpha$ on $L^2(\R^n)$, we see that
\begin{equation*}\abs{\{x\in\R^n: \abs{T_\alpha s(x)}>\mu/2\}} 
\leq 4 \frac{\doublebar{T_\alpha s}_{L^2(\R^n)}^2}{\mu^2} 
\leq C \frac{\doublebar{s}_{L^2(\R^n)}^2}{\mu^2} 
\leq C^2 \frac{\doublebar{g}_{L^1(\R^n)}}{\mu} 
\end{equation*}
as desired.

We now turn to the set $\abs{\{x\in\R^n: \abs{T_\alpha u(x)}>\mu/2\}} $.
We have that $\sum_i \abs{8Q_i}\leq C {\doublebar{g}_{L^1(\R^n)}}/{\mu} $, and so we will consider only the set
\begin{equation*}\{x\in\R^n: \abs{T_\alpha u(x)}>\mu/2\}\setminus \bigcup_i 8Q_i.\end{equation*}

If $x\notin Q_i$, then by formula~\eqref{eqn:S:fundamental},
\begin{equation*}T_\alpha u_i(x)=\int_{Q_i} (\partial_{x,t}^\alpha \partial_s^{m-1} E^{L^*}(x,t,y,s)-\partial_{x,t}^\alpha \partial_s^{m-1} E^{L^*}(x,t,y_0,s))\big\vert_{s=t=0}\,u_i(y)\,dy\end{equation*}
for any $y_0$; in particular, we choose $y_0$ to be the midpoint of~$Q_i$.
Let $A_j=2^{j+1}Q_i\setminus 2^j Q_i$. Suppose that $j\geq 3$. Then 
\begin{equation*}\int_{A_j} \abs{T_\alpha u_i(x)}\,dx
\leq
	\int_{Q_i} \abs{u_i(y)}\int_{A_j}\abs{\partial_{x,t}^\alpha  \partial_s^{m-1} (E^{L^*}(x,0,y,0)- E^{L^*}(x,0,y_0,0))}\,dx\,dy
.\end{equation*}

Let $w(y,s)=\partial_{x,t}^\alpha \partial_s^{m-1} E^{L^*}(x,t,y,s)$. We observe that $L^* w=0$ away from the point $(x,t)$.
If $2m\geq n+3$, then by Theorem~\ref{thm:Meyers}, $\nabla w$ is continuous and pointwise bounded away from $(x,t)$, and so if $j\geq 3$ then
\begin{multline*}
\int_{A_j}\abs{\partial_{x,t}^\alpha  \partial_s^{m-1} (E^{L^*}(x,0,y,0)- E^{L^*}(x,0,y_0,0))}\,dx
\\\leq 
	C\ell(Q_i)\int_{A_j}
	\fint_{2^{j-2}\widetilde Q_i}\abs{\partial_{x,t}^\alpha \nabla_{y,s}^m E^{L^*}(x,0,y,s)}\,dy\,ds\,dx
\end{multline*}
where $\widetilde Q_i = Q_i\times (-\ell(Q_i)/2,\ell(Q_i)/2)$ is a cube in $\R^\dmn$.
We change the order of integration and apply Lemma~\ref{lem:slices} to the function $v(x,t)=\nabla_{y,s}^m E^{L^*}(x,t,y,s)$ to see that 
\begin{multline*}
\int_{A_j}\abs{\partial_{x,t}^\alpha  \partial_s^{m-1} (E^{L^*}(x,0,y,0)- E^{L^*}(x,0,y_0,0))}\,dx
\\\leq 
	\frac{C}{2^j}
	\fint_{2^{j-2}\widetilde Q_i}\int_{\widetilde A_{j,1}}\abs{\partial_{x,t}^\alpha \nabla_{y,s}^m E^{L^*}(x,t,y,s)}\,dx\,dt\,dy\,ds
\end{multline*}
where $\widetilde A_{j,1}=(A_j\cup A_{j-1}\cup A_{j+1})\times (-2^j\ell(Q),2^j\ell(Q))$.

By H\"older's inequality and the bound~\eqref{eqn:fundamental:far},
\begin{equation*}
\int_{A_j}\abs{\partial_{x,t}^\alpha  \partial_s^{m-1} (E^{L^*}(x,0,y,0)- E^{L^*}(x,0,y_0,0))}\,dx
\leq 
	C2^{-j} 
\end{equation*}
and so
\begin{equation*}\int_{\R^n\setminus 8Q_i} \abs{T_\alpha u_i(x)}\,dx
\leq
	C\int_{Q_i} \abs{u_i(y)}\,dy
.\end{equation*}
Thus,
\begin{equation*}\int_{\R^n\setminus\cup_i 8Q_i} \abs{T_\alpha u} \leq C \int_{\R^n} \abs{g}\end{equation*}
and so
\begin{equation*}\abs{\{x:\abs{T_\alpha u(x)}>\mu/2\}}\leq \frac{C}{\mu}\int_{\R^n} \abs{g}\end{equation*}
as desired.
\end{proof}

We will now establish nontangential estimates on the purely vertical derivatives of the single layer potential. We observe that the conditions of the following lemma are met when $2m\geq n+3$, $k=m$ and $2\leq q<p<\infty$. We will later apply the lemma in the $k=0$ and $q<2$ cases.

\begin{lem}\label{lem:nontangential}
Let $L$ be an operator of the form~\eqref{eqn:divergence} of order~$2m$, with $2m\geq n+1$, associated to $t$-independent coefficients~$\mat A$ that satisfy the bounds \eqref{eqn:elliptic:bounded} and~\eqref{eqn:elliptic}.
Let $p_j^+=p_{L^*,j}^+$ be as in Theorem~\ref{thm:Meyers} with $L$ replaced by~$L^*$, and let $1/p_{j}^++1/p_{j}^-=1$.

Let $0\leq k\leq m$, let $\gamma$ be a multiindex with $\abs\gamma=m-1$, and let ${q}$ and ${p}$ satisfy   $p_{j}^-<{q}<{p}\leq\infty$, where $j=\gamma_\dmn+1$.
Suppose that 
the boundary value operator
\begin{equation*}g\mapsto \Tr_{m-k}^+ \partial_\dmn^k \s^L( g\arr e_\gamma),\end{equation*}
which is well defined for all $g\in L^2(\R^n)$,
extends by density to an operator that is bounded $L^{{q}}(\R^n)\mapsto L^{{q}}(\R^n)$  and $L^{{p}}(\R^n)\mapsto L^{{p}}(\R^n)$.

Then we have the bound
\begin{equation*}\doublebar{\widetilde N_+ (\nabla^{m-k} \partial_\dmn^k \s^L( g\arr e_\gamma))}_{L^{{p}}(\R^n)}\leq C\doublebar{g}_{L^{{p}}(\R^n)}\end{equation*}
for all $g\in L^{{p}}(\R^n)\cap L^2(\R^n)$,
for some constant $C$ depending only on ${q}$, ${p}$ and the standard parameters.

Similarly, if $1\leq k\leq m$, if $\abs\alpha=m$, and if
\begin{equation*}h\mapsto \Tr_{m-k}^+ \partial_\dmn^{k-1} \s^L_\nabla(h\arr e_\alpha)(\,\cdot\,,t)\end{equation*}
is bounded $L^{{q}}(\R^n)\mapsto L^{{q}}(\R^n)$ and $L^{{p}}(\R^n)\mapsto L^{{p}}(\R^n)$ for some $p_{j}^-<{q}<{p}\leq\infty$, where $j=\alpha_\dmn$, then
\begin{equation*}\doublebar{\widetilde N_+ (\nabla^{m-k} \partial_\dmn^{k-1} \s^L_\nabla(h\arr e_\alpha))}_{L^{{p}}(\R^n)}\leq C\doublebar{h}_{L^{{p}}(\R^n)}\end{equation*}
for all $h\in L^{{p}}(\R^n)\cap L^2(\R^n)$.
\end{lem}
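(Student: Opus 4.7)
My plan is to derive the nontangential estimates from a pointwise domination of $\widetilde N_+(\nabla^{m-k}\partial_\dmn^k \s^L(g\arr e_\gamma))$ by the Hardy--Littlewood maximal function of the boundary trace $\Tr_{m-k}^+\partial_\dmn^k \s^L(g\arr e_\gamma)$; given such a bound, the hypothesized $L^p$-boundedness of the trace operator together with the maximal theorem yields the conclusion. By the $L^2$-theory of Section~\ref{sec:S:L2} we may restrict to $g \in L^p(\R^n)\cap L^2(\R^n)$ and pass to general $g \in L^p$ by density.

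The pointwise estimate is obtained by combining Lemma~\ref{lem:N:1} with Lemma~\ref{lem:iterate}. Lemma~\ref{lem:N:1} reduces $\widetilde N_+$ at $x_0$ to the supremum over $t_0 > 0$ of $L^2$ averages of $\nabla^{m-k}v$ on Whitney cubes, where $v = \partial_\dmn^k \s^L(g\arr e_\gamma)$ is a solution of $L$ in $\R^\dmn_+$. For each scale $t_0$, I would apply Lemma~\ref{lem:iterate} with $j = m-k$ to $v$ on a cube $\widetilde Q \subset \R^\dmn_+$ of side comparable to $t_0$; this produces an estimate of the form
\begin{equation*}
\biggl(\fint_{\widetilde Q} |\nabla^{m-k}v|^2\biggr)^{1/2} \leq C t_0 \fint_{2\widetilde Q} |\partial_t^{m+1}\s^L(g\arr e_\gamma)| + C \fint_{2Q} |\nabla^{m-k}v(x,\tau)|\,dx
\end{equation*}
for any admissible height $\tau$ in the vertical range of $\widetilde Q$. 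Since $x_0 \in 2Q$, the slice term is majorized by $\mathcal{M}(\nabla^{m-k}v(\,\cdot\,,\tau))(x_0)$.

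The principal obstacle, and the step where the range $p_j^- < q$ is consumed, is to replace the slice at height $\tau > 0$ by the boundary trace at $0^+$. I would accomplish this by iterating the above estimate dyadically over the scales $2^{-\ell}t_0$: at each stage the slice average at one height is controlled by the slice average at the next lower height plus error terms, and the chain converges as $\ell \to \infty$ to the boundary trace by \eqref{eqn:S:boundary:L2}. The Meyers range $q > p_j^-$, with $p_j^+$ the Meyers exponent for $L^*$, supplies the reverse-H\"older inequalities needed to keep the intermediate averages controlled in $L^q$; the appearance of $L^*$ reflects the fundamental-solution duality between $\s^L$ and $\s^{L^*}$ expressed in \eqref{eqn:fundamental:symmetric}. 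The higher-derivative term $t_0 \fint |\partial_t^{m+1}\s^L(g\arr e_\gamma)|$ is reduced to a slice quantity by Caccioppoli applied to the solution $\partial_t^m \s^L(g\arr e_\gamma)$ together with Lemma~\ref{lem:slices}, and is absorbed into the same telescoping chain.

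The argument for $\s^L_\nabla$ runs along identical lines. When $\alpha_\dmn \geq 1$, formula~\eqref{eqn:S:S:vertical} identifies $\nabla^{m-k}\partial_\dmn^{k-1}\s^L_\nabla(h\arr e_\alpha)$ with $-\nabla^{m-k}\partial_\dmn^k\s^L(h\arr e_\zeta)$ where $\alpha = \zeta + \vec e_\dmn$, reducing to the case just handled. When $\alpha_\dmn = 0$, one works directly with the kernel formula~\eqref{eqn:S:variant} and repeats the same Lemma~\ref{lem:N:1}--Lemma~\ref{lem:iterate} chain, using \eqref{eqn:S:rough:boundary:L2} in place of \eqref{eqn:S:boundary:L2} to identify the limiting boundary object.
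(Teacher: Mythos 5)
Your proposal misses the central device of the paper's proof: the local/nonlocal splitting of the datum. In the paper one sets $u_Q = -\partial_\dmn^k\s^L(\1_{4Q}\,g\,\arr e_\gamma)$ (resp.\ the $\s^L_\nabla$ analogue) and decomposes $u = u_Q + (u - u_Q)$. This single step is what makes the whole scheme close, and its absence creates gaps at three places in your argument.

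First, without the split the ``higher-derivative'' term $\ell(Q)\fint_{2\widetilde Q}|\partial_t^{m-k+1}u|$ from Lemma~\ref{lem:iterate} cannot be controlled. In the paper one only needs to bound $\partial_t^{m-k+1}(u-u_Q)$, whose kernel representation (via \eqref{eqn:S:fundamental} and \eqref{eqn:fundamental:vertical}) has the datum supported in $\R^n\setminus 4Q$; an annular decomposition then yields geometric decay $\sum_\ell 2^{-\ell}\mathcal{M}(|g|^q)(x_0)^{1/q}$, via the pointwise Meyers bound \eqref{eqn:Meyers:lowest} --- this is exactly where the hypothesis $2m\ge\dmn$ enters. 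Applied to the full $u$, the same integral has the kernel singularity at $y=x$, $t\to 0$, and there is no smallness. Your suggestion to ``reduce to a slice quantity by Caccioppoli ... and absorb into the telescoping chain'' does not reduce $\partial_t^{m+1}u$ to anything bounded: Caccioppoli trades one derivative for a factor of $1/\ell(Q)$, and Lemma~\ref{lem:slices} gives a slice of $\partial_t^m\s^L(g\arr e_\gamma)$, which is not the quantity you are trying to estimate and has no a priori $L^q$ control near $t=0$.

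Second, the choice $\tau = 0$ in Lemma~\ref{lem:iterate} is made in the paper precisely because $L(u-u_Q) = 0$ in the full two-sided box $2\widetilde Q$ with $\widetilde Q = Q\times(-t_0/2,t_0/2)$ straddling $\{t=0\}$ --- again a consequence of the datum in $u-u_Q$ avoiding $4Q$. Since your $v$ is only a solution in $\R^\dmn_+$, you are forced to take $\tau>0$, and then you propose a dyadic telescoping in $\tau$. But that chain has no source of decay: each application of Lemma~\ref{lem:iterate} at scale $2^{-\ell}t_0$ generates a comparable error term (the $\partial_t^{m+1}$ term just discussed), so the sum over $\ell$ diverges. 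The paper's proof does not iterate over vertical scales at all; the boundary trace appears after a single application of Lemma~\ref{lem:iterate}.

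Third, the hypothesis of $L^q$-boundedness of the trace operator is applied in the paper to $u_Q$, i.e.\ to the localized datum $\1_{4Q}g$, producing the bound $(\fint_{2Q}|\Tr^+_{m-k}u_Q|^q)^{1/q}\le C\,\mathcal{M}(|g|^q)(x_0)^{1/q}$. This is the mechanism that converts $L^q$-boundedness into a pointwise maximal-function bound and is where the requirement $q<p$ is ultimately used. Your plan uses only the $L^p$-boundedness hypothesis applied globally, and never explains what the $L^q$-boundedness hypothesis is for or why $q<p$ is needed. You do correctly identify that $q > p_j^-$ is tied to a reverse-H\"older/Meyers estimate and that $L^*$ enters through the symmetry \eqref{eqn:fundamental:symmetric}; but in the paper that estimate is applied directly to the kernel in the bound of $\fint_{Q}\fint_{t_0/6}^{t_0/2}|\nabla^{m-k}u_Q|^2$ via H\"older's inequality, a computation your proposal does not perform.

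In short: keep Lemma~\ref{lem:N:1} and Lemma~\ref{lem:iterate}, but you must first localize the datum. After the split, the near piece $u_Q$ is estimated directly from the fundamental-solution kernel with H\"older, Lemma~\ref{lem:slices}, Theorem~\ref{thm:Meyers} and \eqref{eqn:fundamental:far}; the far piece $u-u_Q$ is run through Lemma~\ref{lem:iterate} with $\tau=0$, with the annular/Meyers argument handling the top-order term and the trace hypotheses handling the boundary terms. No dyadic iteration in the vertical scale is involved.
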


\begin{proof} 
Let $\zeta=\alpha-j\vec e_\dmn$ or $\zeta=\gamma-(j-1)\vec e_\dmn$, so $\alpha = (\zeta,j)$ or $\gamma=(\zeta,j-1)$. By formulas~\eqref{eqn:S:fundamental} and~\eqref{eqn:S:variant} for $\s^L$ and $\s^L_\nabla$, and by formula~\eqref{eqn:fundamental:vertical}, we have that if $f\in L^2(\R^n)$ is compactly supported, then for almost every $(x,t)\in\R^\dmn_\pm$,
\begin{align}
\label{eqn:S:nontangential:proof}
-\nabla^{m-k} \partial_t^k \s^L(f\arr e_\gamma)(x,t)
&= (-1)^j\int_{\R^n} \nabla_{x,t}^{m-k} \partial_t^{k+j-1} \partial_y^\zeta E^L(x,t,y,0)\,f(y)\,dy,
\\
\label{eqn:S:rough:nontangential:proof}
\nabla^{m-k} \partial_t^{k-1} \s^L_\nabla(f\arr e_\alpha)(x,t)
&= (-1)^j\int_{\R^n} \nabla_{x,t}^{m-k} \partial_t^{k+j-1} \partial_y^\zeta E^L(x,t,y,0)\,f(y)\,dy.
\end{align}
Choose some $g\in L^{p}(\R^n)\cap L^2(\R^n)$, and let either $u=-\partial_\dmn^k\s^L(g\arr e_\gamma)$ or $u=\partial_\dmn^{k-1}\s^L_\nabla(g\arr e_\alpha)$. 

In either case we wish to bound $\widetilde N_+(\nabla^{m-k}u)$. 
Let $x_0\in\R^n$. 
By Lemma \ref{lem:N:1}, 
\begin{equation*}
\widetilde N_+ (\nabla^{m-k} u)(x_0)
\leq 
C\sup_{ t_0>0} \biggl(\fint_{Q(x_0,t_0)}\fint_{t_0/6}^{t_0/2}\abs{\nabla^{m-k} u}^2\biggr)^{1/2}
.\end{equation*}
Choose some $t_0>0$ and let $Q=Q(x_0,t_0)$. Let $\widetilde Q =Q\times(-\ell(Q)/2,\ell(Q)/2) =Q\times(-t_0/2,t_0/2)$. Let $u_Q =-\partial_\dmn^k\s^L(\1_{4Q} g\arr e_\gamma)$ or $u_Q=\partial_\dmn^{k-1}\s^L_\nabla(\1_{4Q}g\arr e_\alpha)$.
Observe that
\begin{equation*}\biggl(\fint_{Q}\fint_{t_0/6}^{t_0/2}\abs{\nabla^{m-k} u}^2\biggr)^{1/2}\!\!
\leq
\biggl(\fint_{Q}\fint_{t_0/6}^{t_0/2}\abs{\nabla^{m-k} u_Q}^2\biggr)^{1/2}
\!\!+
\biggl(3\fint_{\widetilde Q}\abs{\nabla^{m-k} (u-u_Q)}^2\biggr)^{1/2}\llap{.}\end{equation*}

By formulas~\eqref{eqn:S:nontangential:proof} and~\eqref{eqn:S:rough:nontangential:proof} and by H\"older's inequality,
{\multlinegap=0pt\begin{multline*}
\fint_{Q}\fint_{t_0/6}^{t_0/2}\abs{\nabla^{m-k} u_Q}^2
\\\begin{aligned}
&\leq
	\fint_{Q}\fint_{t_0/6}^{t_0/2} 
	\biggl(\int_{4Q} 
	\abs{\nabla_{x,t}^{m-k} \partial_t^{k+j-1} \partial_y^\zeta E^L(x,t,y,0)}^{q'}\,dy \biggr)^{2/q'}
	\!dt\,dx\,\biggl(\int_{4Q} \abs{g}^q \biggr)^{2/q}
.\end{aligned}\end{multline*}}%
If $q>p_j^-$, then $q'<p_j^+$ and so we may use 
Lemma~\ref{lem:slices}, Theorem~\ref{thm:Meyers} and the bound \eqref{eqn:fundamental:far} to bound the integral of $E^L$. Thus,
\begin{equation*}
\biggl(\fint_{Q}\fint_{t_0/6}^{t_0/2}\abs{\nabla^{m-k} u_Q}^2\biggr)^{1/2}
\leq C\biggl(\fint_{4Q} \abs{g}^{{q}}\biggr)^{1/{q}}
\leq C \mathcal{M}(\abs{g}^{{q}})(x_0)^{1/{q}}
.\end{equation*}

By Lemma~\ref{lem:iterate},
\begin{align*}
\biggl(\fint_{\widetilde Q}\abs{\nabla^{m-k} (u-u_Q)}^2\biggr)^{1/2}
&\leq
C\ell(Q)\fint_{2\widetilde Q}\abs{\partial_t^{m-k+1} (u-u_Q)(x,t)}\,dt\,dx
\\&\qquad+
C\fint_{2Q} \abs{\Tr_{m-k}^+u_Q}
+
C\fint_{2Q} \abs{\Tr_{m-k}^+u}
.\end{align*}
The last term is at most $C\mathcal{M}(\Tr_{m-k}^+u)$.
By H\"older's inequality,
\begin{equation*}\fint_{2Q} \abs{\Tr_{m-k}^+  u_Q}
\leq \biggl(\fint_{2Q} \abs{\Tr_{m-k}^+u_Q}^{{q}}\biggr)^{1/{q}}.\end{equation*}
By assumption,
\begin{equation*}
\biggl(\fint_{2Q} \abs{\Tr_{m-k}^+u_Q}^{{q}}\biggr)^{1/{q}}
\leq C\biggl(\fint_{4Q} \abs{g}^{{q}}\biggr)^{1/{q}}
\leq C\mathcal{M}(\abs{g}^{{q}})(x_0)^{1/{q}}.\end{equation*}


Finally, we consider the term involving $\partial_t^{m-k+1} (u-u_{Q})$. 
By formula~\eqref{eqn:S:nontangential:proof} or~\eqref{eqn:S:rough:nontangential:proof}, for almost every $(x,t)\in\R^\dmn_\pm$ we have that
\begin{equation*}\partial_t^{m-k+1}(u-u_Q)(x,t)=
(-1)^j\int_{\R^n\setminus 4Q} \partial_t^{m+j} \partial_{y}^\zeta E^L(x,t,y,0)\,g(y)\,dy.\end{equation*}
Let $A_\ell=2^{\ell+1}Q\setminus 2^\ell Q$. Then 
\begin{equation*}
\partial_t^{m-k+1}(u-u_Q)(x,t) 
= 
	(-1)^j\sum_{\ell=2}^\infty\int_{A_\ell} \partial_t^{m+j} \partial_{y}^\zeta E^L(x,t,y,0)\,g(y)\,dy
.\end{equation*}
Let 
\[u_\ell(x,t) = \int_{A_\ell} \partial_t^{m+j} \partial_{y}^\zeta E^L(x,t,y,0)\,g(y)\,dy.\]
Observe that $Lu_\ell=0$ away from $A_\ell\times\{0\}$.
If $2m\geq \dmn$ and $\ell\geq 2$, then by Theorem~\ref{thm:Meyers},
\[\sup_{(x,t)\in 2\widetilde Q}\abs{u_\ell(x,t)}
\leq C \biggl(\fint_{2^{\ell-1/2}\widetilde Q} \abs{u_\ell}^2\biggr)^{1/2}
.\]
As before, by H\"older's inequality, Lemma~\ref{lem:slices}, Theorem~\ref{thm:Meyers} and the bound~\eqref{eqn:fundamental:far}, if ${q}'<p_j^+$, then
\[\biggl(\fint_{2^{\ell-1/2}\widetilde Q} \abs{u_\ell}^2\biggr)^{1/2}
\leq
\frac{C}{2^\ell t_0 }  \biggl(\fint_{A_\ell} \abs{g}^q\biggr)^{1/q}
\leq \frac{C}{2^\ell t_0 }  \mathcal{M}(\abs{g}^q)(x_0)^{1/q}.
\]

Thus,
\begin{align*}
\biggl(\fint_{Q}\fint_{t_0/6}^{t_0/2}\abs{\nabla^{m-k} u}^2\biggr)^{1/2}
&\leq
	C\mathcal{M}(\abs{g}^{{q}})(x_0)^{1/{q}}
	+
	C\mathcal{M}({\Tr_{m-k}^+ u})(x_0)
\end{align*}
and so by Lemma~\ref{lem:N:1},
\begin{align*}
\widetilde N_+(\nabla^{m-k}u)(x_0)
&\leq
	C\mathcal{M}(\abs{g}^{{q}})(x_0)^{1/{q}}
	+
	C\mathcal{M}({\Tr_{m-k}^+ u})(x_0)
.\end{align*}
By assumption, if $g\in L^{{p}}(\R^n)$ then ${\Tr_{m-k}^+u} \in L^{{p}}(\R^n)$. We have that ${p}>1$ and so $\mathcal{M}$ is bounded on $L^{{p}}(\R^n)$, and so $\mathcal{M}({\Tr_{m-k}^+ u})\in L^{{p}}(\R^n)$. Furthermore, if $g\in L^{{p}}(\R^n)$, then $\abs{g}^{{q}}\in L^{{p}/{q}}(\R^n)$. If ${p}>{q}$, then ${p}/{q}>1$ and so $\mathcal{M}$ is bounded on $L^{{p}/{q}}(\R^n)$. Thus, the right-hand side is in $L^{{p}}(\R^n)$ and the proof is complete.
\end{proof}

We now extend from boundary values of the purely vertical derivatives to boundary values of the full gradient.

\begin{lem}\label{lem:p:range}
Let $L$ be an operator of the form~\eqref{eqn:divergence} of order~$2m$ associated to $t$-independent coefficients~$\mat A$ that satisfy the bounds \eqref{eqn:elliptic:bounded} and~\eqref{eqn:elliptic}.

Then there is some $p>2$ such that, if $\arr g\in L^p(\R^n)\cap L^2(\R^n)$, then
\begin{equation*}
\doublebar{\Tr_m^+\s^L\arr g}_{L^p(\R^n)}
\leq 
C\doublebar{\arr g}_{L^p(\R^n)}
+ C\doublebar{\widetilde N_*(\partial_\dmn^m \s^L\arr g)}_{L^p(\R^n)}
\end{equation*}
whenever the right hand side is finite.

Similarly, there is some $p>2$ such that, if $\arr h\in L^p(\R^n)\cap L^2(\R^n)$, then
\begin{equation*}
\doublebar{\Tr_{m-1}^+\s^L_\nabla\arr h}_{L^p(\R^n)}
\leq 
C\doublebar{\arr h}_{L^p(\R^n)}
+ C\doublebar{\widetilde N_*(\partial_\dmn^{m-1}\s^L_\nabla\arr g)}_{L^p(\R^n)}
\end{equation*}
whenever the right hand side is finite.
\end{lem}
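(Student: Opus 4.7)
We focus on the bound for $\s^L$; the bound for $\s^L_\nabla$ follows by an analogous argument combined with Lemma~\ref{lem:S:variant}, which (for multiindices $\beta$ with $\beta_\dmn\geq 1$) reduces $\nabla^{m-1}\s^L_\nabla(h\arr e_\beta)$ to $-\nabla^{m-1}\partial_t\s^L(h\arr e_\zeta)$ and so lets us invoke the $\s^L$ bound directly. The overall strategy is to upgrade the known $L^2\to L^2$ boundedness of $\Tr_m^+\s^L$ from Section~\ref{sec:S:L2} to an $L^p\to L^p$ bound for some $p>2$ by means of Lemma~\ref{lem:iwaniec}, applied with $q=2$, $g:=|\Tr_m^+\s^L\arr g|$, and $h:=|\arr g|+\widetilde N_*(\partial_\dmn^m\s^L\arr g)$. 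The target local reverse-Hölder inequality is
\[
\biggl(\fint_Q|\Tr_m^+\s^L\arr g|^2\biggr)^{1/2}
\leq C\fint_{\lambda Q}|\Tr_m^+\s^L\arr g|
+C\biggl(\fint_{\lambda Q}h^2\biggr)^{1/2}
\]
for every cube $Q\subset\R^n$ and some $\lambda>1$.

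To prove this inequality, fix $Q$ and set $\arr g_1:=\arr g\1_{4Q}$, $\arr g_2:=\arr g-\arr g_1$, and $u:=\s^L\arr g_2$. The near part is immediate: the $L^2\to L^2$ boundedness of $\Tr_m^+\s^L$ and Hölder give $(\fint_Q|\Tr_m^+\s^L\arr g_1|^2)^{1/2}\leq C(\fint_{4Q}|\arr g|^2)^{1/2}$. For the far part, one checks from definition~\eqref{dfn:S} that $Lu=0$ in the full-space sense on $4Q\times\R$: for $\varphi\in C^\infty_0(4Q\times\R)$ the source pairing $\langle\Tr_{m-1}\varphi,\arr g_2\rangle_{\R^n}$ vanishes since $\Tr_{m-1}\varphi$ is supported in $4Q$ and $\arr g_2|_{4Q}=0$. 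Thus $u$ is a genuine weak solution across the boundary, and Lemma~\ref{lem:iterate} applied to $u$ with $j=m$ on $\widetilde Q:=Q\times(-\ell(Q)/2,\ell(Q)/2)$, together with Lemma~\ref{lem:slices} to pass from interior $L^2$ averages to boundary traces and $\tau\to 0^+$ in the boundary term (justified by~\eqref{eqn:S:boundary:L2} and the smoothness of $u$ across $4Q\times\{0\}$), yields
\[
\biggl(\fint_Q|\Tr_m^+u|^2\biggr)^{1/2}
\leq C\ell(Q)\fint_{2\widetilde Q}|\partial_t^{m+1}u|
+C\fint_{2Q}|\Tr_m^+u|.
\]

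The first term on the right is handled using $t$-independence of $\mat A$: since $\partial_t^m u$ is itself a solution of $Lv=0$, the Caccioppoli inequality (Lemma~\ref{lem:Caccioppoli}) gives $\ell(Q)^2\fint_{2\widetilde Q}|\partial_t^{m+1}u|^2\leq C\fint_{4\widetilde Q}|\partial_t^m u|^2$, and a standard Fubini computation based on the pointwise bound $|\partial_\dmn^m u(y,s)|^2\leq C\fint_{B(y,|s|)}\widetilde N_*(\partial_\dmn^m u)^2\,dx_0$ (obtained from the interior sup-estimate~\eqref{eqn:Meyers:lowest} applied to the solution $\partial_\dmn^m u$) then shows $\fint_{4\widetilde Q}|\partial_t^m u|^2\leq C\fint_{\lambda Q}\widetilde N_*(\partial_\dmn^m u)^2$. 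Writing $u=\s^L\arr g-\s^L\arr g_1$ and bounding the near-part NTM contribution $\widetilde N_*(\partial_\dmn^m\s^L\arr g_1)$ in $L^2$ by $\doublebar{\arr g}_{L^2(4Q)}$ (via $L^2$-boundedness of $\widetilde N_*\partial_\dmn^m\s^L$, which follows from Lemma~\ref{lem:nontangential} together with the $L^2$-boundedness of $\Trace^+\partial_\dmn^m\s^L$) completes the reverse-Hölder inequality, and Lemma~\ref{lem:iwaniec} yields the claimed $L^p$ bound. The principal technical obstacle is precisely this last step: ensuring that the pointwise bound holds uniformly at all interior points of $4\widetilde Q$ requires Theorem~\ref{thm:Meyers} to provide a sup-to-$L^2$-average estimate on the solution $\partial_\dmn^m u$, which in turn forces $2m$ to be large enough relative to the dimension (as already assumed throughout Section~\ref{sec:S:trace}); a secondary subtlety is the convergence $\nabla^m u(\,\cdot\,,\tau)\to\Tr_m^+u$ in $L^1(2Q)$ as $\tau\to 0^+$, which uses the smoothness of $u$ across the boundary where $\arr g_2$ vanishes.
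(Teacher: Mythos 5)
Your overall architecture matches the paper's: apply Lemma~\ref{lem:iwaniec} with $q=2$, $g=\abs{\Tr_m^+\s^L\arr g}$, $h=\abs{\arr g}+\widetilde N_*(\partial_\dmn^m\s^L\arr g)$, decompose $\arr g$ into a part supported in $4Q$ and a far part, use $L^2$ trace boundedness \eqref{eqn:S:boundary:L2} for the near part, and invoke Lemma~\ref{lem:iterate}, Caccioppoli, and Lemma~\ref{lem:slices} on the far part (which is indeed a genuine solution across $4Q\times\{0\}$). Up to this point you are aligned with the paper.

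The gap is in your treatment of the term $\ell(Q)^2\fint\abs{\partial_t^{m+1}u}^2$ after Caccioppoli produces $\fint\abs{\partial_t^m u}^2$ with $u=\s^L\arr g_2$. You pass to $\widetilde N_*(\partial_\dmn^m u)$ and then split $\widetilde N_*(\partial_\dmn^m u)\leq\widetilde N_*(\partial_\dmn^m\s^L\arr g)+\widetilde N_*(\partial_\dmn^m\s^L\arr g_1)$; for the second term you invoke ``$L^2$-boundedness of $\widetilde N_*\partial_\dmn^m\s^L$, which follows from Lemma~\ref{lem:nontangential} together with the $L^2$-boundedness of $\Trace^+\partial_\dmn^m\s^L$.'' This justification does not work: Lemma~\ref{lem:nontangential} requires trace boundedness at \emph{two} exponents $q<p$ (it needs $p/q>1$ so that $\mathcal M$ is bounded on $L^{p/q}$) and yields an NTM bound at the level $p$, so to get an $L^2$ NTM bound you would need $L^q$ trace bounds for some $q<2$. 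Those are not provided by Lemma~\ref{lem:p:range:vertical}, which is stated only for $2\leq p<\infty$, and Lemma~\ref{lem:p:range} is precisely the stepping stone toward establishing anything below $p=2$, so invoking it here risks circularity. The paper avoids this entirely by \emph{not} bounding $u_Q$ through its NTM: instead it splits the difference $\partial_t^\ell(u-u_Q)$ into $\partial_t^\ell u$ (absorbed into $h$ via the elementary slab-to-NTM estimate, which requires no Meyers $L^\infty$ bound and hence no restriction $2m\geq\dmn$) and $\partial_t^\ell u_Q$, which is controlled directly by the uniform-in-$t$ slice bound \eqref{eqn:S:L2}: $\fint_{3Q}\fint\abs{\partial_t^\ell u_Q}^2\leq C\abs{Q}^{-1}\sup_t\doublebar{\partial_t^\ell u_Q(\cdot,t)}_{L^2}^2\leq C\fint_{4Q}\abs{\arr g}^2$. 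You should replace your NTM argument for the localized piece with this slice estimate; as written, the step is not justified by the tools available at this point in the paper.
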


\begin{proof}
We follow the proof of a similar inequality in \cite[pp.~17--19]{HofMitMor15}. Choose some $\arr g\in L^2(\R^n)\cap L^p(\R^n)$, and let $u=s^L\arr g$ or $u=\s^L_\nabla \arr g$. Assume that $\widetilde N_*(\partial_\dmn^\ell u)\in L^p(\R^n)$, where $\ell=m$ or $\ell=m-1$. We wish to show that for some $p>2$ we have that $\Tr_\ell^+u\in L^p(\R^n)$.

As in the proof of Lemma~\ref{lem:lusin:+}, we will use Lemma~\ref{lem:iwaniec}.
For each cube $Q\subset\R^n$, let $\arr g_Q=\arr g\1_{4Q}$ and $\arr g=\arr g_Q+\arr g_{Q,f}$, and let $u_Q=\s^L\arr g_Q$ or $\s^L_\nabla\arr g_Q$. Then
\begin{equation*}\fint_Q \abs{\Tr_\ell^+ u}^2
\leq
2\fint_Q \abs{\Tr_\ell^+ u_Q}^2
+2\fint_Q \abs{\Tr_\ell^+ (u-u_Q)}^2.\end{equation*}
By the bounds \eqref{eqn:S:boundary:L2} and~\eqref{eqn:S:rough:boundary:L2}, 
\begin{equation*}\fint_Q \abs{\Tr_\ell^+ u}^2
\leq
C\fint_{4Q} \abs{\arr g}^2
+2\fint_Q \abs{\Tr_\ell^+ (u-u_Q)}^2
.\end{equation*}
Observe that $L(u-u_Q)=0$ in a neighborhood of $Q\times\{0\}$, and so we may write
\begin{equation*}\fint_Q \abs{\Tr_\ell^+ (u-u_Q)}^2
= \fint_Q \abs{\nabla^{\ell} (u-u_Q)(x,0)}^2\,dx.\end{equation*}
By Lemma~\ref{lem:slices},
\begin{equation*}
\fint_Q \abs{\Tr_\ell^+ (u-u_Q)}^2
\leq C \fint_{(3/2)Q} \fint_{-\ell(Q)/2}^{\ell(Q)/2} \abs{\nabla^{\ell} (u-u_Q)(x,t)}^2\,dt\,dx
.\end{equation*}
By Lemma~\ref{lem:iterate} and H\"older's inequality, 
\begin{align*}
\fint_Q \abs{\Tr_\ell^+ (u-u_Q)}^2
&\leq
{C}{\ell(Q)^2}\fint_{2Q}\fint_{-\ell(Q)}^{\ell(Q)}\abs{\partial_t^{\ell+1} (u-u_Q)(x,t)}^2\,dt\,dx
\\&\qquad+C\biggl(\fint_{2Q} \abs{\nabla^{\ell }(u-u_Q)(x,0)}\,dx\biggr)^2
.\end{align*}
By the Caccioppoli inequality,
\begin{multline*}\ell(Q)^2\fint_{2Q}\fint_{-\ell(Q)}^{\ell(Q)}\abs{\partial_t^{\ell+1} (u-u_Q)(x,t)}^2\,dt\,dx
\\\leq
C\fint_{3Q}\fint_{-(3/2)\ell(Q)}^{(3/2)\ell(Q)}\abs{\partial_t^{\ell} (u-u_Q)(x,t)}^2\,dt\,dx
\end{multline*}

Now, 
\begin{equation*}\fint_{3Q} \fint_{-(3/2)\ell(Q)}^{(3/2)\ell(Q)} \abs{\partial_t^\ell u_Q(x,t)}^2 \,dt\,dx
\leq
\frac{C}{\abs{Q}}\sup_{t\neq 0} \doublebar{\partial_t^\ell u_Q(\,\cdot\,,t)}_{L^2(\R^n)}^2
\end{equation*}
which by the bound~\eqref{eqn:S:L2} or~\eqref{eqn:S:rough:L2} is at most $C\abs{Q}^{-1}\doublebar{\arr g_Q}_{L^2(\R^n)}^2$. Thus,
\begin{align*}
\fint_Q \abs{\Tr_\ell^+ (u-u_Q)}^2
&\leq 
	C\fint_{3Q} \fint_{-(3/2)\ell(Q)}^{(3/2)\ell(Q)} \abs{\partial_\dmn^\ell u(x,t)}^2 \,dt\,dx
	+C \fint_{4Q}\abs{\arr g(x)}^2\,dx
	\\&\qquad
	+C\biggl(\fint_{2Q} \abs{\nabla^{\ell}(u-u_Q)(x,0)}\,dx\biggr)^2
.\end{align*}

An elementary argument shows that
\begin{equation*}\fint_{3Q} \fint_{-(3/2)\ell(Q)}^{(3/2)\ell(Q)} \abs{\partial_\dmn^\ell u(x,t)}^2 \,dt\,dx
\leq
C\fint_{3Q} \widetilde N_*(\partial_\dmn^\ell u)(x)^2\,dx.\end{equation*}

By H\"older's inequality,
\begin{equation*}
\fint_{2Q} \abs{\nabla^{\ell}(u-u_Q)(x,0)}\,dx
\leq 
	\fint_{2Q} \abs{\Tr_\ell^+ u(x)}\,dx
	+\biggl(\fint_{2Q} \abs{\Tr_\ell^+u_Q(x)}^2\,dx\biggr)^{1/2}
\end{equation*}
which by the bound~\eqref{eqn:S:boundary:L2} or~\eqref{eqn:S:rough:boundary:L2} is at most
\begin{equation*}\fint_{2Q} \abs{\Tr_\ell^+ u(x)}\,dx
	+C\biggl(\fint_{4Q} \abs{\arr g(x)}^2\,dx\biggr)^{1/2}.\end{equation*}

Thus, we see that
\begin{align*}
\fint_Q \abs{\Tr_\ell^+ u}^2
&\leq
	C\fint_{4Q} \abs{\arr g}^2	
	+C\fint_{3Q} \widetilde N_*(\partial_\dmn^\ell u)(x)^2 \,dx
	+C\biggl(\fint_{2Q} \abs{\Tr_\ell^+ u(x)}\,dx\biggr)^{2}
.\end{align*}

We will use Lemma~\ref{lem:iwaniec}.
Let $g=\abs{\Tr_\ell^+ u}$, let  $h=\abs{\arr g}+\widetilde N_*(\partial_\dmn^\ell u)$ and let $q=2$. 
Then there is some $p>2$ such that
\begin{equation*}\int_{\R^n} \abs{\Tr_\ell^+ u(x)}^p\,dx
\leq C\int_{\R^n} \abs{\arr g(x)}^p+\widetilde N_*(\partial_\dmn^\ell u)(x)^p\,dx\end{equation*}
as desired.
\end{proof}

\subsection{Reduction to operators of high order}\label{sec:S:high}

The following formulas were established in \cite{Bar16,BarHM17,BarHM17pA} and inspired by an argument in \cite[Section~2.2]{AusHMT01}; we will use them to pass from the case $2m\geq n+3$ to the general case.

Choose some large number $M$. There are constants $\kappa_\zeta$ such that
\begin{equation*}\Delta^M = \sum_{\abs{\zeta}=M} \kappa_\zeta\, \partial^{2\zeta}.\end{equation*}
In fact, $\kappa_\zeta = M!/\zeta!=M!/(\zeta_1!\zeta_2!\dots\zeta_\dmn!)$, and so we have that $\kappa_\zeta\geq 1$ for all $\abs{\zeta}=M$.

Define the differential operator $\widetilde L=\Delta^M \! L\Delta^M$; that is, $\widetilde L$ is the operator of order $4M+2m$ associated to coefficients $\mat{\widetilde A}$ that satisfy
\begin{equation}\label{eqn:A:high}
\langle \nabla^{m+2M}\varphi,\mat{\widetilde A}\nabla^{m+2M}\psi\rangle
=\langle \nabla^{m}\Delta^M\varphi,\mat{A}\nabla^{m}\Delta^M\psi\rangle
\end{equation}
for all nice test functions $\varphi$ and~$\psi$. 
Observe that $\widetilde{\mat A}$ is $t$-independent and satisfies the bounds \eqref{eqn:elliptic:bounded} and~\eqref{eqn:elliptic}. A precise formula for~$\widetilde A$ may be found in \cite[formula~11.1]{BarHM17}.

Let $\widetilde g_\varepsilon(x) = \sum_{\gamma+2\xi=\varepsilon} \kappa_\xi\,g_\gamma(x)$.  By \cite[formula~(11.2)]{BarHM17}, if $\abs\alpha=m$ then
\begin{align}\label{eqn:S:high}
\partial^\alpha \s^L \arr g(x,t)
&=
\sum_{\abs{\zeta}=M}
\kappa_\zeta \partial^{\alpha+2\zeta} \s^{\widetilde{\mat A}} \arr{ \widetilde{g}}(x,t)
=
\partial^{\alpha} \Delta^M\s^{\widetilde{\mat A}} \arr{ \widetilde{g}}(x,t)
.\end{align}

Similarly, let $\widetilde h_\varepsilon = \sum_{\alpha+2\xi=\varepsilon}\kappa_\xi h_\alpha$.
If $\abs\gamma=m-1$, then \cite[formula~(3.10)]{BarHM17pA},
\begin{align}\label{eqn:S:variant:high}
\partial^\gamma \s^L_\nabla \arr h(x,t)
&=
	\sum_{\abs{\zeta}=M}
	\kappa_\zeta \partial^{\gamma+2\zeta} \s^{\widetilde{L}}_\nabla \arr{\widetilde{h}}(x,t)
=
	\partial^{\gamma} \Delta^M\s^{\widetilde{L}}_\nabla \arr{\widetilde{h}}(x,t)
.\end{align}

\subsection{Nontangential and area integral estimates}
\label{sec:S:N}

We now establish the nontangential bounds~\eqref{eqn:S:N:intro} and~\eqref{eqn:S:N:rough:intro}  on the single layer potential.

\begin{lem}\label{lem:S:N} Let $L$ be an operator of the form~\eqref{eqn:divergence} of order~$2m$ associated to $t$-independent coefficients~$\mat A$ that satisfy the bounds \eqref{eqn:elliptic:bounded} and~\eqref{eqn:elliptic}.

Then the bounds~\eqref{eqn:S:N:intro} and~\eqref{eqn:S:N:rough:intro} are valid. That is, there is some number $\varepsilon>0$ such that the bounds
\begin{equation*}
\doublebar{\widetilde N_*(\nabla^m \s^L\arr g)}_{L^p(\R^n)} 
\leq C_p\doublebar{\arr g}_{L^p(\R^n)}
\end{equation*}
and
\begin{equation*}
\doublebar{\widetilde N_*(\nabla^{m-1} \s^L_\nabla\arr h)}_{L^p(\R^n)} 
\leq C_p\doublebar{\arr h}_{L^p(\R^n)}
\end{equation*}
are valid whenever $2-\varepsilon<p<2+\varepsilon$.
\end{lem}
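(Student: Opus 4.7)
The plan is to establish the one-sided bound $\doublebar{\widetilde N_+(\nabla^m \s^L\arr g)}_{L^p(\R^n)} \leq C\doublebar{\arr g}_{L^p(\R^n)}$; the two-sided statement in terms of $\widetilde N_*$ follows via the symmetry considerations of Section~\ref{sec:lower}, and the argument for $\s^L_\nabla$ runs in complete parallel to that for $\s^L$ (using \eqref{eqn:S:variant:high} in place of~\eqref{eqn:S:high}).

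The heart of the proof is the case $p > 2$, which I would handle by a bootstrap on a high-order auxiliary operator. Fix $M$ large enough that $2\widetilde m := 2m + 4M \geq n+3$, and set $\widetilde L = \Delta^M L\Delta^M$; by the discussion around \eqref{eqn:A:high}, $\widetilde L$ is a divergence-form operator of order $2\widetilde m$ with $t$-independent coefficients satisfying \eqref{eqn:elliptic:bounded} and~\eqref{eqn:elliptic}. The bootstrap has four stages. First, Lemma~\ref{lem:p:range:vertical} applied to $\widetilde L$ gives boundary $L^p$ bounds for all $2\leq p<\infty$ on the purely vertical trace $\Trace^\pm\partial_t^{\widetilde m}\s^{\widetilde L}\arr g$. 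Second, Lemma~\ref{lem:nontangential} (first assertion) with $k=\widetilde m$ and $\gamma = (\widetilde m - 1)\vec e_\dmn$ promotes this into the nontangential estimate $\widetilde N_+(\partial_\dmn^{\widetilde m}\s^{\widetilde L}(g\arr e_\gamma))\in L^p$ for every $p > 2$; the relevant Meyers exponent $p_j^-$ with $j = \widetilde m$ equals $1$ (equivalently $p_j^+=\infty$) because $2\widetilde m\geq n+1$ places us in the H\"older regime of Theorem~\ref{thm:Meyers}. Third, Lemma~\ref{lem:p:range} applied to $\widetilde L$, together with the bound on $\widetilde N_*(\partial_\dmn^{\widetilde m}\s^{\widetilde L}\arr g)$ from the second step (the lower-half-space component supplied by Section~\ref{sec:lower}), produces a boundary $L^p$ bound on the full trace $\Tr_{\widetilde m}^+\s^{\widetilde L}\arr g$ for some $p > 2$. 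Fourth, a second application of Lemma~\ref{lem:nontangential}, now with $k=0$ and $q=2<p$, converts that full-trace bound into $\widetilde N_+(\nabla^{\widetilde m}\s^{\widetilde L}(g\arr e_\varepsilon))\in L^p$. The reduction formula~\eqref{eqn:S:high} then expresses $\partial^\alpha\s^L\arr g$ (for $\abs\alpha=m$) as a linear combination of $\widetilde m$-th order derivatives of $\s^{\widetilde L}\arr{\widetilde g}$, so that $\widetilde N_+(\nabla^m\s^L\arr g) \leq C\,\widetilde N_+(\nabla^{\widetilde m}\s^{\widetilde L}\arr{\widetilde g})$ pointwise; together with the trivial $\doublebar{\arr{\widetilde g}}_{L^p(\R^n)} \leq C\doublebar{\arr g}_{L^p(\R^n)}$ this yields the desired nontangential estimate for some specific $p > 2$. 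Marcinkiewicz interpolation with the $L^2$ nontangential bound, which I would obtain from \eqref{eqn:S:lusin:2} via the good-$\lambda$ comparison discussed in the next paragraph, then extends the estimate to the full interval $(2, 2+\varepsilon)$.

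For $2-\varepsilon<p\leq 2$, I would invoke the known area integral bounds \eqref{eqn:S:lusin:-} and~\eqref{eqn:S:lusin:rough:-} in tandem with a standard good-$\lambda$ comparison between the nontangential maximal function and the Lusin area integral for solutions of $Lu=0$ in $\R^\dmn_+$; the necessary decay at infinity is furnished by \eqref{eqn:S:limit:L2} (and the analogue for $\s^L_\nabla$). This gives $\doublebar{\widetilde N_+(\nabla^m\s^L\arr g)}_{L^p(\R^n)} \leq C\doublebar{\mathcal{A}_2^+(t\nabla^m\partial_t\s^L\arr g)}_{L^p(\R^n)} \leq C\doublebar{\arr g}_{L^p(\R^n)}$ on a left neighborhood of~$2$, with a parallel chain for $\s^L_\nabla$.

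The principal obstacle I anticipate is the index bookkeeping in the four-stage bootstrap: verifying that the various Meyers exponents $p_j^\pm$ remain uniformly favorable across every multiindex encountered, and that the $p$-ranges produced by Lemmas~\ref{lem:p:range:vertical}, \ref{lem:nontangential}, and~\ref{lem:p:range} overlap in a common interval above~$2$. A secondary technical point is the good-$\lambda$ comparison of $\widetilde N$ with $\mathcal{A}_2$ for the $p\leq 2$ case; while standard at order~$2$, its formulation for gradients of order $m$ of higher-order solutions requires care and will rely on the regularity results of Section~\ref{sec:preliminaries} together with the decay at infinity.
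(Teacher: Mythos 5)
The $p>2$ half of your bootstrap (high-order operator $\widetilde L$, then Lemma~\ref{lem:p:range:vertical}, then Lemma~\ref{lem:nontangential} with $k=\widetilde m$, then Lemma~\ref{lem:p:range}, then Lemma~\ref{lem:nontangential} with $k=0$, then formulas~\eqref{eqn:S:high} and~\eqref{eqn:S:variant:high}) matches the paper's proof in substance and in order. The genuine divergence is how you reach $2-\varepsilon<p\leq 2$, and there the proposal has a gap.

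You propose to handle $p\leq 2$ by a ``standard good-$\lambda$ comparison'' between $\widetilde N_+(\nabla^m\s^L\arr g)$ and $\mathcal{A}_2^+(t\nabla^m\partial_t\s^L\arr g)$, invoking \eqref{eqn:S:lusin:-}, \eqref{eqn:S:lusin:rough:-} and the decay \eqref{eqn:S:limit:L2}. The difficulty is that an inequality of the type $\doublebar{\widetilde N_+(\nabla^m u)}_{L^p}\lesssim\doublebar{\mathcal{A}_2^+(t\nabla^m\partial_t u)}_{L^p}$ for solutions of higher-order equations $Lu=0$, while classical when $2m=2$, is not ``standard'' in the higher-order variable-coefficient setting, is not proved anywhere in this paper, and cannot simply be cited. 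Establishing such a good-$\lambda$ estimate at order $2m\geq 4$ would itself require a nontrivial argument of roughly the same weight as the lemma being proved; indeed much of the present paper's content is precisely the passage from the known area-integral bounds to nontangential bounds, so assuming that passage as an input for $p\leq 2$ is circular. The same assumption also underlies your interpolation endpoint at $p=2$, so it is not a peripheral issue.

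The paper's proof avoids this entirely by a duality argument. After Lemma~\ref{lem:p:range} produces the trace bound at some $\tilde p>2$, it interpolates to $[2,\tilde p]$, runs the identical three steps for the adjoint $L^*$ (also of order $2\widetilde m$ with $t$-independent elliptic coefficients) to get trace bounds on $[2,\tilde p_*]$, and then uses the duality relation~\eqref{eqn:S:dual}, namely $\langle\arr g,\nabla^{m-1}\s^L_\nabla\arr h(\,\cdot\,,t)\rangle_{\R^n}=\langle\nabla^m\s^{L^*}\arr g(\,\cdot\,,-t),\arr h\rangle_{\R^n}$, to transfer those to trace bounds for $L$ on $[\tilde p_*',2]$. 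This yields trace bounds on the full interval $[\tilde p_*',\tilde p]$, which straddles $2$, and then a single application of Lemma~\ref{lem:nontangential} with $k=0$ (choosing $q\in(\max(\tilde p_*',p_0^-),p)$) delivers the nontangential estimate simultaneously on $(2-\varepsilon,2+\varepsilon)$ without any good-$\lambda$ input. You should replace your $p\leq 2$ paragraph with this duality step; the rest of your outline can stand.

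One small exposition point: in your second stage you fix $\gamma=(\widetilde m-1)\vec e_\dmn$, but the estimate must be obtained for all multiindices $\gamma$ with $\abs\gamma=\widetilde m-1$, and the relevant $j=\gamma_\dmn+1$ can be as small as $1$. The constraint $q>p_j^-$ in Lemma~\ref{lem:nontangential} is still satisfied for $q\geq 2$ since $p_j^+>2$ for every $j\geq 0$; but the claim that ``$p_j^-=1$'' only holds for the component you singled out, and the argument should be stated for general $\gamma$.
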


\begin{proof} 
Let $M$ be large enough that $2\widetilde m = 2m+4M\geq n+3$, and let $\widetilde L$ be the operator of order $2\widetilde m$ associated to the coefficients $\widetilde{\mat A}$ given by formula~\eqref{eqn:A:high}.

By Lemmas \ref{lem:p:range:vertical} and~\ref{lem:nontangential} (with $k=\widetilde m$ or $k=\widetilde m-1$), and by Section~\ref{sec:lower},
we have that the bounds
\begin{equation*}
\doublebar{\widetilde N_*(\partial_\dmn^{\widetilde m}\s^{\widetilde L}\arr {\widetilde g})}_{L^p(\R^n)}\leq C_p\doublebar{\arr {\widetilde g}}_{L^p(\R^n)}
,\qquad
\doublebar{\widetilde N_*(\partial_\dmn^{\widetilde m-1}\s^{\widetilde L}_\nabla\arr {\widetilde h})}_{L^p(\R^n)}\leq C_p\doublebar{\arr {\widetilde h}}_{L^p(\R^n)}
\end{equation*}
are valid for all $2<p<\infty$ and all $\arr{\widetilde g}$, $\arr{\widetilde h}\in L^p(\R^n)\cap L^2(\R^n)$.

Thus, by Lemma~\ref{lem:p:range}, there is some $\tilde p>2$ such that if $p=\tilde p$, then
\begin{equation}
\label{eqn:S:trace}
\doublebar{\Tr^\pm_{\widetilde m}\s^{\widetilde L}\arr {\widetilde g}}_{L^p(\R^n)}\leq C\doublebar{\arr {\widetilde g}}_{L^p(\R^n)}
,\quad
\doublebar{\Tr^\pm_{\widetilde m-1}\s^{\widetilde L}_\nabla\arr {\widetilde h}}_{L^p(\R^n)}\leq C\doublebar{\arr {\widetilde h}}_{L^p(\R^n)}
.\end{equation}
By interpolation, the inequalities~\eqref{eqn:S:trace} are valid for all $p$ with $2\leq p\leq \tilde p$. 

The adjoint operator $L^*$ to~$L$ is also of the form~\eqref{eqn:divergence}, of order~$2\widetilde m$, and associated to $t$-independent coefficients~$\mat{\widetilde A}\vphantom{\mat A}^*$ that satisfy the bounds \eqref{eqn:elliptic:bounded} and~\eqref{eqn:elliptic}. Thus, there is some $\tilde p_*>0$ such that the inequalities~\eqref{eqn:S:trace} are valid, with $L$ replaced by~$L^*$, for all $p$ with $2\leq p\leq \tilde p_*$. 

By the duality relation~\eqref{eqn:S:dual}, the inequalities~\eqref{eqn:S:trace} (with the original~$L$) are valid for all $\tilde p_*'\leq p\leq \tilde p$. 

By Lemma~\ref{lem:nontangential} (with $k=0$), we have that if $\max(\tilde p_*',p_0^-)< p\leq \tilde p$, then
\begin{equation*}
\doublebar{\widetilde N_*(\nabla^{\widetilde m}\s^{\widetilde L}\arr {\widetilde g})}_{L^p(\R^n)}\leq C_p\doublebar{\arr {\widetilde g}}_{L^p(\R^n)}
,\qquad
\doublebar{\widetilde N_*(\nabla^{\widetilde m-1}\s^{\widetilde L}_\nabla\arr {\widetilde h})}_{L^p(\R^n)}\leq C_p\doublebar{\arr {\widetilde h}}_{L^p(\R^n)}
\end{equation*}
An application of formulas~\eqref{eqn:S:high} and~\eqref{eqn:S:variant:high} completes the proof.
\end{proof}

As an immediate corollary we have area integral estimates.

\begin{lem}\label{lem:S:lusin}
Let $L$ be an operator of the form~\eqref{eqn:divergence} of order~$2m$ associated to $t$-independent coefficients~$\mat A$ that satisfy the bounds \eqref{eqn:elliptic:bounded} and~\eqref{eqn:elliptic}. 

Then the bounds~\eqref{eqn:S:lusin:intro} and~\eqref{eqn:S:lusin:rough:intro} are valid. That is, there is some number $\varepsilon>0$ such that the bounds
\begin{align}
\label{eqn:S:lusin}
\doublebar{\mathcal{A}_2^+(t\,\nabla^m \partial_t\s^L\arr g)}_{L^p(\R^n)} 
&\leq C_p\doublebar{\arr g}_{L^p(\R^n)}
,\\
\label{eqn:S:lusin:rough}
\doublebar{\mathcal{A}_2^+(t\,\nabla^m \s^L_\nabla\arr h)}_{L^p(\R^n)} 
&\leq C_p\doublebar{\arr h}_{L^p(\R^n)}
\end{align}
are valid whenever $2-\varepsilon<p<2+\varepsilon$.
\end{lem}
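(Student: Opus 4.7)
The range $2-\varepsilon<p\leq 2$ is already contained in the known bounds \eqref{eqn:S:lusin:-} and \eqref{eqn:S:lusin:rough:-}, so the plan is to establish the bounds for $2<p<2+\varepsilon$. For this range, the idea is to invoke the abstract transference result, Lemma~\ref{lem:lusin:+}, with appropriate local/global decompositions of the relevant solutions.

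To prove \eqref{eqn:S:lusin}, I would set $u=\partial_t\s^L\arr g$ and, for each cube $Q\subset\R^n$, define $u_Q=\partial_t\s^L(\arr g\1_{4Q})$. The difference $u-u_Q=\partial_t\s^L(\arr g\1_{\R^n\setminus 4Q})$ can be represented via formula~\eqref{eqn:S:fundamental}; since its ``Neumann data'' $\arr g\1_{\R^n\setminus 4Q}$ vanishes on $4Q\supset 3Q$, the integrand is smooth in $(x,t)$ on any compact subset of $3Q\times\R$, so $u-u_Q\in\dot W^2_m(3Q\times(-\ell(Q),\ell(Q)))$ and $L(u-u_Q)=0$ there in the weak sense. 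The two $L^2$ hypotheses of Lemma~\ref{lem:lusin:+} are then verified as follows: the bound on $\mathcal{A}_2^+(t\nabla^m u_Q)$ by $C\doublebar{\arr g}_{L^2(4Q)}$ is exactly the known estimate~\eqref{eqn:S:lusin:2}, while since $\nabla^{m-1}\partial_t u_Q$ is a component of $\nabla^m\s^L(\arr g\1_{4Q})$, the bound on $\widetilde N_*(\nabla^{m-1}u_Q)$ follows from the $p=2$ case of Lemma~\ref{lem:S:N}. Lemma~\ref{lem:lusin:+} with $\psi=C\abs{\arr g}$ then gives, for $2\leq p<2+\varepsilon$,
\begin{equation*}
\doublebar{\mathcal{A}_2^+(t\nabla^m\partial_t\s^L\arr g)}_{L^p(\R^n)}
\leq C_p\doublebar{\arr g}_{L^p(\R^n)}+C_p\doublebar{\widetilde N_*(\nabla^{m-1}\partial_t\s^L\arr g)}_{L^p(\R^n)}.
\end{equation*}
The last term is dominated by $\doublebar{\widetilde N_*(\nabla^m\s^L\arr g)}_{L^p(\R^n)}$, which by Lemma~\ref{lem:S:N} is bounded by $C_p\doublebar{\arr g}_{L^p(\R^n)}$, completing the proof of \eqref{eqn:S:lusin}.

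For \eqref{eqn:S:lusin:rough} the argument is structurally identical with $u=\s^L_\nabla\arr h$ and $u_Q=\s^L_\nabla(\arr h\1_{4Q})$. Again $u-u_Q$ is represented by formula~\eqref{eqn:S:variant} with data $\arr h\1_{\R^n\setminus 4Q}$ supported away from $3Q$, so $\nabla^{m-1}(u-u_Q)$ is smooth in a neighborhood of $3Q\times\{0\}$ and $L(u-u_Q)=0$ weakly across $3Q\times\{0\}$. The required $L^2$ hypotheses now come from \eqref{eqn:S:lusin:rough:-} at $p=2$ (for the area integral of $u_Q$) and from the $p=2$ case of the second bound in Lemma~\ref{lem:S:N} (for the nontangential maximal function of $u_Q$). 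Applying Lemma~\ref{lem:lusin:+} and then using the $L^p$ version of Lemma~\ref{lem:S:N} to control $\widetilde N_*(\nabla^{m-1}\s^L_\nabla\arr h)$ yields \eqref{eqn:S:lusin:rough}.

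The only step requiring any care is the verification that $u-u_Q$ is a weak solution to $L(u-u_Q)=0$ across the hyperplane $3Q\times\{0\}$; this is where the fundamental-solution representations \eqref{eqn:S:fundamental} and \eqref{eqn:S:variant} together with the localization $\arr g\mapsto \arr g\1_{\R^n\setminus 4Q}$ (resp.~$\arr h\mapsto\arr h\1_{\R^n\setminus 4Q}$) are essential. Once that is in place, everything else is an immediate application of Lemma~\ref{lem:lusin:+} and Lemma~\ref{lem:S:N}.
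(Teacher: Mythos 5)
Your proof is correct and follows the paper's argument exactly: same choices $u=\partial_t\s^L\arr g$ (resp.\ $u=\s^L_\nabla\arr h$) and $u_Q=\partial_t\s^L(\1_{4Q}\arr g)$ (resp.\ $u_Q=\s^L_\nabla(\1_{4Q}\arr h)$), same invocation of Lemma~\ref{lem:lusin:+} with $\psi=C\abs{\arr g}$ or $C\abs{\arr h}$, and the same use of the known $p\leq 2$ bounds \eqref{eqn:S:lusin:-}--\eqref{eqn:S:lusin:rough:-} together with Lemma~\ref{lem:S:N}. The extra detail you supply (verifying $L(u-u_Q)=0$ near $3Q\times\{0\}$ and controlling $\widetilde N_*(\nabla^{m-1}u_Q)$ as a subarray) is a harmless elaboration of what the paper states tersely.
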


\begin{proof}
The case $2-\varepsilon<p\leq 2$ is known (see formulas \eqref{eqn:S:lusin:-} and~\eqref{eqn:S:lusin:rough:-} above). The $p>2$ case follows from Lemma~\ref{lem:lusin:+} with $u=\partial_t\s^L\arr g$ or $u=\s^L_\nabla\arr h$, $u_Q=\partial_t\s^L(\1_{4Q}\arr g)$ or $u_Q=\s^L_\nabla(\1_{4Q}\arr h)$; by the bounds~\eqref{eqn:S:lusin:-} and~\eqref{eqn:S:lusin:rough:-} and Lemma~\ref{lem:S:N}, the conditions of the lemma are satisfied with $\psi=C\abs{\arr h}$ or $\psi=C\abs{\arr g}$.
\end{proof}

\section{The double layer potential}
\label{sec:D}

In this section we will establish the nontangential estimates~(\ref{eqn:D:N:intro}--\ref{eqn:D:N:rough:intro}) and the area integral estimates~(\ref{eqn:D:lusin:intro}--\ref{eqn:D:lusin:rough:intro}) on the double layer potential. 

We will begin (Section~\ref{sec:D:boundary}) by showing that the boundary values $\Tr_m\D^{\mat A}\arr \varphi$ and $\Tr_{m-1}\D^{\mat A}\arr f$ lie in $L^p(\R^n)$, for $p$ near 2 and for appropriate inputs $\arr\varphi$ and $\arr f$. 
We will then (Section~\ref{sec:D:N:high}) establish the nontangential estimate~\eqref{eqn:D:N:intro} on $\nabla^m\D^{\mat A}\arr\varphi$ in the special case where $2m\geq\dmn$. In Section~\ref{sec:D:high} we will extend to the case $2m<\dmn$. Finally, in Section~\ref{sec:D:final}, we will complete the proof of Theorem~\ref{thm:potentials} by establishing the bounds \eqref{eqn:D:N:rough:intro} and~(\ref{eqn:D:lusin:intro}--\ref{eqn:D:lusin:rough:intro}).

\subsection{Boundary values of the double layer potential}
\label{sec:D:boundary}

We begin by bounding the boundary values of the double layer potential. 

\begin{lem}\label{lem:boundary:D} Let $L$ be an operator of the form~\eqref{eqn:divergence} of order~$2m$ associated to $t$-independent coefficients~$\mat A$ that satisfy the bounds \eqref{eqn:elliptic:bounded} and~\eqref{eqn:elliptic}.
Then there is an $\varepsilon>0$ such that if $2-\varepsilon<p<2+\varepsilon$, then
\begin{align}
\label{eqn:D:boundary:1}
\doublebar{\Tr_{m-1}^+ \D^{\mat A}\arr f}_{L^p(\R^n)} 
&\leq C_p\doublebar{\arr f}_{\dot W\!A^p_{m-1,0}(\R^n)}
,\\
\label{eqn:D:boundary:2}
\doublebar{\Tr_{m}^+ \D^{\mat A}\arr \varphi}_{L^p(\R^n)}
&\leq C_p\doublebar{\arr \varphi}_{\dot W\!A^p_{m-1,1}(\R^n)}
\end{align}
whenever $\arr f\in\dot W\!A^2_{m-1,1/2}(\R^n)\cap\dot W\!A^p_{m-1,0}(\R^n)$ and whenever 
$\arr\varphi=\Tr_{m-1}\Phi$ for some $\Phi\in C^\infty_0(\R^\dmn)$.
\end{lem}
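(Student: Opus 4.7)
The plan is to bootstrap from the $L^2$ version of these bounds, which is essentially already known, to the $L^p$ version for $p$ in a neighborhood of~$2$, using Lemma~\ref{lem:iwaniec} and the nontangential bounds on $\s^L$ and $\s^L_\nabla$ from Lemma~\ref{lem:S:N}. First, the $p = 2$ case will follow by combining the $L^2$-area integral bounds~\eqref{eqn:D:lusin:2} and~\eqref{eqn:D:lusin:rough:2} with the Fatou-type existence theorems of \cite{BarHM17pB}: the area integral bounds control the derivatives of $\D^{\mat A}$, and the Fatou-type results supply the existence of the boundary trace and its $L^2$ norm in terms of those area integrals, hence in terms of the Whitney-norm of the data.

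Second, to upgrade to $p > 2$ I would localize. Given $\arr\varphi = \Tr_{m-1}\Phi$ with $\Phi \in C^\infty_0$ and a cube $Q \subset \R^n$, choose a cutoff $\chi$ equal to~$1$ on $3Q\times(-\ell(Q)/2,\ell(Q)/2)$ and supported in $4Q\times(-\ell(Q),\ell(Q))$, and split $\Phi = \chi\Phi + (1-\chi)\Phi$, yielding $\arr\varphi = \arr\varphi_Q + \arr\varphi_{Q,f}$. The local piece $\D^{\mat A}\arr\varphi_Q$ is controlled in~$L^2(Q)$ by Step~1 and the pointwise product rule, which bounds the localized Whitney norm by averages of $\nabla_\pureH\arr\varphi$ over~$4Q$. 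The far piece $\D^{\mat A}\arr\varphi_{Q,f}$ is a genuine solution of $Lu = 0$ in $3Q\times(-\ell(Q)/2,\ell(Q)/2)$ (using that $\nabla^{m-1}\D^{\mat A}$ has no jump across $\partial \R^\dmn_+$ where the data vanishes, together with formulas~\eqref{dfn:D:newton:+} and~\eqref{dfn:D:newton:-}), so Lemma~\ref{lem:iterate} and the Caccioppoli inequality convert its boundary trace on~$Q$ into an interior integral involving $\partial_t^{m+1}\D^{\mat A}\arr\varphi_{Q,f}$ (for \eqref{eqn:D:boundary:2}) or $\partial_t^{m}\D^{\mat A}\arr\varphi_{Q,f}$ (for \eqref{eqn:D:boundary:1}), modulo an interior average which can be absorbed into $\widetilde N_*(\nabla^{m-k}\D^{\mat A}\arr\varphi)$ on a dilate of $Q$.

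Combining these bounds would produce a self-improvement inequality of the shape
\begin{equation*}
\biggl(\fint_Q |\Tr_m^+\D^{\mat A}\arr\varphi|^2\biggr)^{1/2}
\leq C\fint_{cQ}|\Tr_m^+\D^{\mat A}\arr\varphi|
+C\biggl(\fint_{cQ}|\nabla_\pureH\arr\varphi|^2\biggr)^{1/2},
\end{equation*}
to which Lemma~\ref{lem:iwaniec} applies to yield the $L^p$ bound for some $p > 2$. Interpolation gives all $p \in [2,p_0)$, and the $p < 2$ range is handled by duality: the corresponding inequalities for the adjoint operator $L^*$ (with $\s^{L^*}$ or $\s^{L^*}_\nabla$ playing the role of $\s^L$, as in the proof of Lemma~\ref{lem:S:N}) give the desired bounds on $\D^{\mat A}$ for $p < 2$ via the duality relation analogous to~\eqref{eqn:S:dual}. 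The bound~\eqref{eqn:D:boundary:1} follows by the parallel argument, with $\s^L_\nabla$ in place of $\s^L$ and one fewer order of derivative throughout.

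The main obstacle I expect is showing that the interior $L^2$ piece appearing after Lemma~\ref{lem:iterate}, namely an average of $|\partial_t^{m-k+1}\D^{\mat A}\arr\varphi|$ on a cube overlapping the boundary, is controlled by the $\dot W\!A^p_{m-1,1}$ (resp.\ $\dot W\!A^p_{m-1,0}$) norm of $\arr\varphi$ rather than some stronger norm. To this end I would use the representation $\D^{\mat A}\arr\varphi = -\Pi^L(\1_-\mat A\nabla^m\Phi)$ in~$\R^\dmn_+$ (valid since $\Pi^L(\mat A\nabla^m\Phi) = \Phi$), combined with formula~\eqref{eqn:fundamental:vertical} and integration by parts in the $s$-variable (exploiting $t$-independence of $\mat A$), to rewrite $\partial_t^{j}\D^{\mat A}\arr\varphi$ as an appropriate derivative of $\s^L$ (or $\s^L_\nabla$) applied to tangential derivatives of~$\arr\varphi$; Lemma~\ref{lem:S:N} then supplies the desired pointwise/$L^p$ control.
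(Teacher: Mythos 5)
Your proposal is a genuinely different route from the paper's, which proves both bounds entirely by duality: formula~\eqref{eqn:S:D:dual} pairs $\Tr_{m-1}^+\D^{\mat A}\arr f$ against the Neumann boundary values $\M_{\mat A^*}^-\s^{L^*}\arr g$, which Theorems~6.1 and~6.2 of \cite{BarHM17pB} control by the nontangential and area-integral norms of $\nabla^m\s^{L^*}\arr g$ --- quantities bounded in $L^{p'}$ for all $2-\varepsilon<p'<2+\varepsilon$ by Lemmas~\ref{lem:S:N} and~\ref{lem:S:lusin}. That gives both ranges of $p$ at once, with no Gehring step and no separate $p=2$ argument. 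Your Gehring-style approach could in principle give the $p>2$ half, but as written it has several gaps.

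The most serious is the claim in your last paragraph that integration by parts in $s$ lets you ``rewrite $\partial_t^j\D^{\mat A}\arr\varphi$ as an appropriate derivative of $\s^L$ (or $\s^L_\nabla$) applied to tangential derivatives of $\arr\varphi$.'' It does not. Integration by parts in $s$ yields the identity~\eqref{eqn:D:D:S},
\begin{equation*}
\partial^\gamma\partial_t\D^{\mat A}(\Tr_{m-1}\Phi)
=\partial^\gamma\D^{\mat A}(\Tr_{m-1}\partial_\dmn\Phi)
-\partial^\gamma\s^L_\nabla(\mat A\Tr_m\Phi),
\end{equation*}
which still contains a double layer on the right. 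That residual $\D^{\mat A}(\Tr_{m-1}\partial_\dmn\Phi)$ term cannot be made to vanish or be iterated away for generic data, and this is exactly why the paper must prove~\eqref{eqn:D:boundary:1} first (by duality) and then use it to control the residual term when proving~\eqref{eqn:D:boundary:2}; your plan provides no ordering or substitute for this. A related circularity problem appears earlier in your argument: after Lemma~\ref{lem:iterate}, the interior term $\ell(Q)^2\fint_{2\widetilde Q}|\partial_t^{m+1}u_{Q,f}|^2$ cannot be ``absorbed into $\widetilde N_*(\nabla^{m-k}\D^{\mat A}\arr\varphi)$'' inside the Gehring inequality, since an $L^p$ bound on that nontangential function is precisely what Theorem~\ref{thm:potentials} is trying to establish (and its proof, Lemma~\ref{lem:D:N:high}, uses Lemma~\ref{lem:boundary:D} as an input). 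Controlling that interior term honestly requires a far-field decay estimate on $\partial_t^{m+1}\D^{\mat A}$ of the kind used in the proof of Lemma~\ref{lem:D:N:high}, which in turn needs the pointwise bound~\eqref{eqn:Meyers:lowest} and hence the hypothesis $2m\ge\dmn$, followed by the $\Delta^M$ reduction of Section~\ref{sec:D:high} for low-order operators; none of this appears in your proposal. Finally, your $p<2$ step is underspecified: the duality~\eqref{eqn:S:D:dual} pairs against $\Tr_{m-1}^+\D^{\mat A}$, and for $\Tr_m^+\D^{\mat A}$ one can integrate by parts horizontally when $\alpha_\dmn<m$, but the purely vertical derivative $\partial_\dmn^m\D^{\mat A}$ has no such pairing and must be handled by~\eqref{eqn:D:D:S} together with the already-proved~\eqref{eqn:D:boundary:1}, as the paper does.
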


\begin{proof}
By \cite[formulas~(5.4) and~(5.6)]{Bar17}, we have the duality relation
\begin{equation}\label{eqn:S:D:dual}\langle \arr g, \Tr_{m-1}^+ \D^{\mat A}\arr f\rangle_{\R^n} 
= 
	-\langle \M_{\mat A^*}^- \s^{L^*}\arr g,\arr f\rangle_{\R^n} 
\end{equation}
for all $\arr f\in \dot W\!A^2_{m-1,1/2}(\R^n)$ and all $\arr g\in \dot B^{2,2}_{-1/2}(\R^n)$. Here $\M_{\mat A^*}^-$ represents the Neumann boundary operator of \cite{Bar17}; if $u\in \dot W^2_m(\R^\dmn_-)$ then the definition of $\M_{\mat A^*}^- u$ in \cite{Bar17} coincides with that in \cite{BarHM17pB}.

Recall from the definition~\eqref{dfn:S} of $\s^{L^*}$ that if $\arr g\in \dot B^{2,2}_{-1/2}(\R^n)$ then $\s^{L^*}\arr g\in \dot W^2_m(\R^\dmn)$.
By \cite[Theorem~6.2]{BarHM17pB}, if $\s^{L^*}\arr g\in \dot W^2_m(\R^\dmn_-)$ and $1<p<\infty$, then
\begin{multline*}
\abs{\langle \arr g, \Tr_{m-1}^+ \D^{\mat A}\arr f\rangle_{\R^n} } = \abs{\langle \M_{\mat A^*}^- \s^{L^*}\arr g,\arr f\rangle_{\R^n} }
\\\leq
C\doublebar{\arr f}_{\dot W\!A^{p}_{m-1,0}(\R^n)} 
\bigl(\doublebar{\mathcal{A}_2^-(t\nabla^m \partial_t\s^{L^*}\arr g)}_{L^{p'}(\R^n)}+\doublebar{\widetilde N_-(\nabla^m \s^{L^*}\arr g)}_{L^{p'}(\R^n)}\bigr).
\end{multline*}
Here $\mathcal{A}_2^- $ and $\widetilde N_-$ are defined analogously to $\mathcal{A}_2^+ $ and $\widetilde N_+$ in the lower half space.

By Lemmas~\ref{lem:S:N} and~\ref{lem:S:lusin}, if $\arr g\in L^{p'}(\R^n)\cap \dot B^{2,2}_{-1/2}(\R^n)$ for some $2-\varepsilon<{p'}<2+\varepsilon$, then
\begin{equation*}\abs{\langle \arr g, \Tr_{m-1}^+ \D^{\mat A}\arr f\rangle_{\R^n}}
\leq
C\doublebar{\arr f}_{\dot W\!A^{p}_{m-1,0}(\R^n)} \doublebar{\arr g}_{L^{p'}(\R^n)}.
\end{equation*}
Because $L^{p'}(\R^n)\cap \dot B^{2,2}_{-1/2}(\R^n)$ is dense in $L^{p'}(\R^n)$, the bound~\eqref{eqn:D:boundary:1} is valid.

We now turn to the bound~\eqref{eqn:D:boundary:2}. We wish to bound $\Trace^+ \partial^\alpha \D^{\mat A}\arr \varphi$ for all $\abs\alpha=m$. We will need separate arguments for the cases $\alpha_\dmn<m$ and $\alpha_\dmn>0$.

We begin with the case $\alpha_\dmn<\abs\alpha=m$; then there is some $j$ with $1\leq j\leq n$ such that $\alpha_j\geq1$, and so $\alpha=\gamma+\vec e_j$ for some multiindex $\gamma$ with nonnegative entries. Integrating by parts, we have that if $h\in C^\infty_0(\R^n)$ then
\begin{equation*}\langle h, \Trace^+ \partial^\alpha\D^{\mat A}\arr \varphi\rangle_{\R^n} 
= -\langle \partial_{x_j} h, \Trace^+ \partial^\gamma\D^{\mat A}\arr \varphi\rangle_{\R^n} .\end{equation*}
By formula~\eqref{eqn:S:D:dual}, if $\arr \varphi\in \dot W\!A^2_{m-1,1/2}(\R^n)$ then
\begin{equation*}\langle h, \Trace^+ \partial^\alpha\D^{\mat A}\arr \varphi\rangle_{\R^n} 
= \langle \M_{\mat A^*}^- \s^{L^*}(\partial_{x_j} h\arr e_\gamma),\arr \varphi\rangle_{\R^n}.\end{equation*}
The function $\partial_{x_j} h$ is in $\dot B^{2,2}_{-1/2}(\R^n)$, and so $\s^{L^*}(\partial_j h\arr e_\gamma)\in \dot W^2_m(\R^\dmn)$. By \cite[Theorem~6.1]{BarHM17pB}, if $1<p'<\infty$ then
\begin{equation*}\abs{\langle \M_{\mat A^*}^- \s^{L^*}(\partial_j h\arr e_\gamma),\arr \varphi\rangle_{\R^n} }
\leq
C\doublebar{\arr \varphi}_{\dot W\!A^{p}_{m-1,1}(\R^n)} 
\doublebar{\mathcal{A}_2^-(t\nabla^m \s^{L^*}(\partial_j h\arr e_\gamma))}_{L^{p'}(\R^n)}.
\end{equation*}
By formula~\eqref{eqn:S:S:horizontal}, $\nabla^m\s^{L^*}(\partial_{j} h \arr e_\gamma) = -\nabla^m\s^{L^*}_\nabla (h\arr e_\alpha) $. Thus,
\begin{equation*}\abs{\langle h, \Trace^+ \partial^\alpha\D^{\mat A}\arr \varphi\rangle_{\R^n} }
\leq
C\doublebar{\arr \varphi}_{\dot W\!A^p_{m-1,1}(\R^n)} 
\doublebar{\mathcal{A}_2^-(t\nabla^m \s^{L^*}_\nabla( h\arr e_\alpha))}_{L^{p'}(\R^n)}.
\end{equation*}
By Lemma~\ref{lem:S:lusin}, if $h\in L^{p'}(\R^n)$ for some $2-\varepsilon<{p'}<2+\varepsilon$, then
\begin{equation*}\abs{\langle h, \Trace^+ \partial^\alpha\D^{\mat A}\arr \varphi\rangle_{\R^n} }
\leq
C\doublebar{\arr \varphi}_{\dot W\!A^p_{m-1,1}(\R^n)} 
\doublebar{h}_{L^{p'}(\R^n)}.
\end{equation*}
By duality and by density, we have that 
\begin{equation*}\doublebar{\Trace^+\partial^\alpha\D^{\mat A}\arr \varphi}_{L^p(\R^n)}
\leq C_p\doublebar{\arr \varphi}_{\dot W\!A^p_{m-1,1}(\R^n)}
\end{equation*}
whenever $\alpha_\dmn<m$.

Finally, we turn to $\partial_\dmn^m \D^{\mat A}\arr \varphi$. In fact, we will bound $\partial^\alpha \D^{\mat A}\arr \varphi$ for any $\alpha$ with $\abs\alpha=m$ and $\alpha_\dmn>0$. Recall that $\arr \varphi=\Tr_{m-1} \Phi$ for some $\Phi\in \dot W^2_m(\R^\dmn)$. As in the proof of \cite[formula~(6.3)]{BarHM17pA}, for almost every $(x,t)\in\R^\dmn_+$, by formulas~\eqref{dfn:D:newton:-} and \eqref{eqn:fundamental:2} we have that 
\begin{align}
\label{eqn:D:fundamental}
\partial^\alpha \D^{\mat A} \arr \varphi(x,t)
&=
- \!\!\sum_{\substack{\abs{\xi}=\abs{\beta}=m}} \int_{\R^\dmn_-} \partial_{x,t}^\alpha \partial_{y,s}^\xi E^L(x,t,y,s)\,A_{\xi\beta}(y) \, \partial^\beta \Phi(y,s)\,ds\,dy.
\end{align}
Let $\gamma=\alpha-\vec e_\dmn$. By assumption, $\gamma\in(\N_0)^\dmn$ is a multiindex with nonnegative entries.
By formula~\eqref{eqn:fundamental:vertical},
\begin{align*}
\partial^\gamma\partial_t \D^{\mat A} \arr \varphi(x,t)
&=
\sum_{\substack{\abs{\xi}=\abs{\beta}=m}} \int_{\R^\dmn_-} \partial_{x,t}^\gamma \partial_s \partial_{y,s}^\xi E^L(x,t,y,s)\,A_{\xi\beta}(y) \, \partial^\beta \Phi(y,s)\,ds\,dy
.\end{align*}
If $\Phi\in C^\infty_0(\R^\dmn)$, then we may integrate by parts in~$s$ to see that
\begin{align*}
\partial^\gamma\partial_t \D^{\mat A} \arr \varphi(x,t)
&=
-\sum_{\substack{\abs{\xi}=\abs{\beta}=m}} \int_{\R^\dmn_-} \partial_{x,t}^\gamma \partial_{y,s}^\xi E^L(x,t,y,s)\,A_{\xi\beta}(y) \, \partial^\beta \partial_s \Phi(y,s)\,ds\,dy
\\&\qquad
-\sum_{\substack{\abs{\xi}=\abs{\beta}=m}} \int_{\R^n} \partial_{x,t}^\gamma \partial_{y,s}^\xi E^L(x,t,y,0)\,A_{\xi\beta}(y) \, \partial^\beta \Phi(y,0)\,dy.
\end{align*}
By formulas~\eqref{dfn:D:newton:-} and~\eqref{eqn:S:variant}, we have that if $\Phi\in C^\infty_0(\R^\dmn)$ and $\abs\gamma=m-1$, then
\begin{multline}
\label{eqn:D:D:S}
\partial^\gamma\partial_t \D^{\mat A} (\Tr_{m-1}\Phi)(x,t)
\\=
\partial^\gamma\D^{\mat A}(\Tr_{m-1}\partial_\dmn \Phi)(x,t)
-\partial^\gamma \s^L_\nabla(\mat A\Tr_m \Phi)(x,t)
.\end{multline}
Let $\Psi(x,t)=\Phi(x,t)-\eta(t)\,t^m\,\partial_\dmn^m \Phi(x,0)/m!$ for a smooth cutoff function~$\eta$ equal to $1$ near $t=0$. Then $\Tr_{m-1}\Psi=\Tr_{m-1}\Phi$ and $\partial_\dmn^m \Psi(x,0)=0$, and so
\begin{align*}\doublebar{\Tr_{m-1}^+\partial_\dmn \Psi}_{\dot W\!A^p_{m-1,0}(\R^n)}
+\doublebar{\mat A\Tr_m^+ \Psi}_{L^p(\R^n)}
&\leq C\doublebar{\arr \varphi}_{\dot W\!A^p_{m-1,1}(\R^n)}
.\end{align*}
Thus, by the bounds \eqref{eqn:D:boundary:1} and~\eqref{eqn:S:trace}, we have that
\begin{equation*}\doublebar{\Trace^+\partial^\alpha \D^{\mat A}\arr \varphi}_{L^p(\R^n)}
\leq C_p\doublebar{\arr \varphi}_{\dot W\!A^p_{m-1,1}(\R^n)}
\end{equation*}
whenever $\alpha_\dmn>0$ and $p$ is sufficiently close to~2.
This completes the proof.
\end{proof}

\subsection{Nontangential estimates for operators of high order}
\label{sec:D:N:high}
In this section we will establish the bound~\eqref{eqn:D:N:intro} in the special case $2m\geq\dmn$. In Section~\ref{sec:D:high} we will pass to the case of lower order operators, and in Section~\ref{sec:D:final} we will establish the bounds \eqref{eqn:D:N:rough:intro}, \eqref{eqn:D:lusin:intro} and~\eqref{eqn:D:lusin:rough:intro}.

\begin{lem}\label{lem:D:N:high}
Let $L$ be an operator of the form~\eqref{eqn:divergence} of order~$2m\geq\dmn$ associated to $t$-independent coefficients~$\mat A$ that satisfy the bounds \eqref{eqn:elliptic:bounded} and~\eqref{eqn:elliptic}.
Then the bound~\eqref{eqn:D:N:intro} is valid; that is, there is some $\varepsilon>0$ such that if $2-\varepsilon< p<2+\varepsilon$, then
\begin{align*}
\doublebar{\widetilde N_+(\nabla^m\D^{\mat A}\arr \varphi)}_{L^p(\R^n)} \leq C_p \doublebar{\arr \varphi}_{\dot W\!A^p_{m-1,1}(\R^n)}
\end{align*}
for any $\arr\varphi=\Tr_{m-1} \Phi$ for some $\Phi$ smooth and compactly supported.
\end{lem}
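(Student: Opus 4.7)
The plan is to mimic the proof of Lemma~\ref{lem:nontangential}, using the fundamental-solution representation~\eqref{eqn:D:fundamental} for $\D^{\mat A}$ together with a localization of the Whitney extension $\Phi$ (rather than of a boundary datum $\arr g$). Specifically, I will aim to establish the pointwise bound
\begin{equation*}
\widetilde N_+(\nabla^m \D^{\mat A}\arr\varphi)(x_0)
\leq C\mathcal{M}\bigl(\abs{\nabla_\pureH\arr\varphi}^q\bigr)(x_0)^{1/q}
+ C\mathcal{M}\bigl(\abs{\Tr_m^+ \D^{\mat A}\arr\varphi}\bigr)(x_0)
\end{equation*}
for some $q<2$ close to~$2$; combined with Lemma~\ref{lem:boundary:D} and the $L^p$-boundedness of the Hardy-Littlewood maximal operator, this yields the desired estimate for $p$ in a small neighborhood of~$2$.

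After fixing $x_0\in\R^n$, $t_0>0$, and $Q=Q(x_0,t_0)$, and using Lemma~\ref{lem:N:1} to reduce matters to the $L^2$-average of $\abs{\nabla^m\D^{\mat A}\arr\varphi}$ on $Q\times(t_0/6,t_0/2)$, I will choose a representative $\Phi\in C^\infty_0(\R^\dmn)$ of $\arr\varphi=\Tr_{m-1}\Phi$ for which $\partial_\dmn^m\Phi(\,\cdot\,,0)=0$ (so that every component of $\Tr_m^+\Phi$ is a horizontal derivative of some $\varphi_\gamma$) and for which $\Phi$ is supported in a thin horizontal slab, with $\doublebar{\nabla^m\Phi}_{L^q(\R^\dmn)}$ controlled by $\doublebar{\nabla_\pureH\arr\varphi}_{L^q(\R^n)}$. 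Taking a smooth horizontal cutoff $\eta$ with $\eta\equiv 1$ on $3Q$ and $\supp\eta\subset 4Q$, I will split $\Phi=\Phi_Q+\Phi_f$ with $\Phi_Q=\eta\Phi$, giving $\D^{\mat A}\arr\varphi=u_Q+u_f$ where $u_Q=\D^{\mat A}(\Tr_{m-1}\Phi_Q)$. For the local piece $u_Q$, since $\supp\nabla^m\Phi_Q\subset 4Q\times\R_-$ and any point $(x,t)\in Q\times(t_0/6,t_0/2)$ lies at distance at least $c\,t_0$ from any source point, H\"older's inequality in~\eqref{eqn:D:fundamental} combined with Lemma~\ref{lem:slices}, Theorem~\ref{thm:Meyers} and~\eqref{eqn:fundamental:far} will produce the required pointwise bound on $\abs{\nabla^m u_Q}$ by $C\mathcal{M}(\abs{\nabla_\pureH\arr\varphi}^q)(x_0)^{1/q}$. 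For the far piece $u_f$, because $\Phi_f\equiv 0$ on $3Q\times\R$, the representation~\eqref{eqn:D:fundamental} shows that $u_f$ extends as a classical solution of $Lu_f=0$ in an open neighborhood of $3Q\times\{0\}$, so Lemma~\ref{lem:iterate} (with $\tau=0$) reduces interior averages of $\abs{\nabla^m u_f}$ to the sum of a boundary trace $\fint_{2Q}\abs{\Tr_m^+u_f}$ (handled via $\Tr_m^+u_f=\Tr_m^+\D^{\mat A}\arr\varphi-\Tr_m^+u_Q$ together with Lemma~\ref{lem:boundary:D} applied to~$u_Q$) and an interior term $\fint\abs{\partial_t^{m+1}u_f}$ handled by a dyadic horizontal annular decomposition of $\R^\dmn_-\setminus(4Q\times\R)$ combined with the pointwise Meyers bound~\eqref{eqn:Meyers:lowest}, valid because $2m\geq\dmn$, exactly as in the concluding step of the proof of Lemma~\ref{lem:nontangential}.

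The principal technical obstacle is the passage from the boundary Whitney space $\dot W\!A^p_{m-1,1}(\R^n)$ to the volume integration in~\eqref{eqn:D:fundamental}. Unlike the single-layer case, where the input is already a function on~$\R^n$, here one must select the extension $\Phi$ and its horizontal localization $\Phi_Q$ so that the $L^q$-norm of $\nabla^m\Phi_Q$ on each horizontal dyadic annulus is genuinely controlled by the $L^q$-norm of $\nabla_\pureH\arr\varphi$ on that annulus (rather than by some extension-dependent quantity), and so that the localization by $\eta$ does not generate uncontrolled lower-order error terms. Once this extension is arranged correctly---for example, by a polynomial-in-$t$ extension of order~$m$ with vanishing top-order vertical trace, truncated at scale $\ell(Q)$---the dyadic sums in the far-part analysis converge and the template of Lemma~\ref{lem:nontangential} carries through essentially unchanged.
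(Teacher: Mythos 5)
Your overall plan---localize via Lemma~\ref{lem:N:1} and Lemma~\ref{lem:iterate}, split $u=u_Q+u_f$, bound $\nabla^m u_Q$ directly, bound $\partial_t^{m+1} u_f$ by a dyadic annular decomposition with the pointwise Meyers estimate, and absorb the boundary trace via Lemma~\ref{lem:boundary:D}---is essentially the paper's plan, and you have correctly identified the central obstacle: producing a full extension $\Phi$ in $\R^\dmn$ whose $m$th gradient on each dyadic annulus is genuinely controlled by $\|\nabla_\pureH\arr\varphi\|_{L^q}$ on that annulus. However, the specific extension you propose to overcome this obstacle does not work, and this is a real gap.

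You propose a ``polynomial-in-$t$ extension of order $m$ with vanishing top-order vertical trace, truncated at scale $\ell(Q)$,'' i.e.\ something of the form $H(x,t)=\sum_{k=0}^{m-1}\tfrac{1}{k!}\,t^k\varphi_k(x)$ with $\varphi_k=\partial_\dmn^k\Phi(\cdot,0)$. This fails: for $t\neq 0$, a mixed derivative such as $\nabla_\pureH^{m-j}\partial_\dmn^j H(x,t)$ picks up contributions $\tfrac{t^{k-j}}{(k-j)!}\nabla_\pureH^{m-j}\varphi_k(x)$ for $k>j$, and $\nabla_\pureH^{m-j}\varphi_k=\nabla_\pureH^{m-j}\partial_\dmn^k\Phi(\cdot,0)$ has $m-j+k>m$ total derivatives landing on $\Phi$. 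These are not controlled by $\nabla_\pureH\arr\varphi$; the Whitney array $\arr\varphi$ only gives you derivatives of $\Phi(\cdot,0)$ of total order exactly $m-1$, and $\nabla_\pureH\arr\varphi$ of total order exactly $m$. So the slice norms $\|\nabla^m H(\cdot,t)\|_{L^q(\R^n)}$ are not bounded by $\|\nabla_\pureH\arr\varphi\|_{L^q(\R^n)}$, and the far-part dyadic sum will not close. The paper's fix is to use the mollified extension $H_j(x,t)=\sum_{k=0}^{m-1}\tfrac{1}{k!}\,t^k\,\varphi_{j,k}*\theta_t(x)$ with a mollifier $\theta$ having vanishing moments up to order $m-1$: the convolution lets excess horizontal derivatives fall on $\theta_t$, producing $t^{-\ell}$ factors that are exactly compensated by the $t^k$ prefactors, so that $\sup_t\|\nabla^m H_j(\cdot,t)\|_{L^q(\R^n)}\leq C\|\nabla_\pureH\arr\varphi_j\|_{L^q(\R^n)}$. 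Without this device (or an equivalent one), your extension-dependent error terms are genuinely uncontrolled.

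Two further, smaller points where your route diverges from the paper: (i)~for the near piece $u_Q$ the paper bounds $\nabla^m u_Q$ by writing $u_Q$ via the Newton potential $\Pi^L$ and invoking $L^q(\R^\dmn)$-boundedness of $\nabla^m\Pi^L$ from \cite[Lemma~43]{Bar16}, rather than integrating the fundamental solution in~\eqref{eqn:D:fundamental} by hand; your direct route is plausible but would require re-deriving essentially the same estimate. (ii)~To make the horizontal cutoff harmless you need a polynomial subtraction (the $P_R$ in the paper's Definition~\ref{dfn:D:local}) before multiplying by the bump function, together with a Poincar\'e inequality; you acknowledge the issue but do not indicate the mechanism, and without the subtraction the lower-order terms generated by $\nabla^m(\eta\Phi)$ are controlled only by full Sobolev norms of $\Phi$ rather than by $\arr\varphi$.
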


The remainder of this section will be devoted to a proof of this lemma.

We will apply Lemma~\ref{lem:N:1} to $\nabla^mu$, where $u=\D^{\mat A}\arr\varphi$. Let $Q=Q(x_0,t_0)\subset\R^n$ be a cube of side length $t_0$ and with midpoint~$x_0$.
As in the proof of Lemma~\ref{lem:nontangential}, by Lemma~\ref{lem:iterate}, if $u_Q$ satisfies $L(u-u_Q)=0$ in $2Q\times (-\ell(Q),\ell(Q))$, then
\begin{align}
\label{eqn:D:N:proof}
\biggl(\fint_{Q}\fint_{t_0/6}^{t_0/2}\abs{\nabla^m u}^2\biggr)^{1/2}
&\leq
\biggl(\fint_{Q}\fint_{t_0/6}^{t_0/2}\abs{\nabla^m u_Q}^2\biggr)^{1/2}
\\&\qquad\nonumber
+C\ell(Q)\fint_{2 Q}\fint_{-\ell(Q)}^{\ell(Q)} \abs{\partial_t^{m+1} (u-u_Q)}
\\&\qquad\nonumber
+C\fint_{2Q} \abs{\Tr_m^+ u_Q}
+C\fint_{2Q} \abs{\Tr_m^+ u}
.\end{align}
The final term is at most $C\mathcal{M}(\Tr_m^+ u)(x_0)=C\mathcal{M}(\Tr_m^+ \D^{\mat A}\arr\varphi)(x_0)$, which we may control using Lem\-ma~\ref{lem:boundary:D} and boundedness of the maximal operator.
We will bound the remaining terms much as in the proof of Lem\-ma~\ref{lem:nontangential}. 
Our first step is to construct an appropriate $u_Q$.

\begin{defn}\label{dfn:D:local} Suppose that $\arr\varphi=\Tr_{m-1}\Phi$ for some $\Phi\in C^\infty_0(\R^\dmn)$, and let $R\subset\R^n$ be a cube. We define $\arr\varphi_R$ as follows.

Let $\rho_R:\R^n\mapsto [0,\infty)$ be smooth, supported in $(4/3)R$ and identically equal to 1 in~$R$, and let $\eta:\R\mapsto[0,\infty)$ be smooth, supported in $(-2,2)$ and equal to $1$ in $(-1,1)$.

Let $\Phi_R(x,t)=\rho_R(x) \eta(2t/\ell(R))(\Phi(x,t)-P_R(x,t))+P_R(x,t)$, where $P_R$ is the polynomial of degree $m-1$ that satisfies $\int_{(4/3)R} \nabla^k \Phi(x,0)-\nabla^k P_R(x,0)\,dx=0$ for any $0\leq k\leq m-1$. Observe that $\nabla^m \Phi_R=0$ outside of $(4/3)R\times (-\ell(R),\ell(R))$ and that $\Phi_R=\Phi$ in $R\times (-\ell(R)/2,\ell(R)/2)$.

Let $\arr\varphi_R=\Tr_{m-1} \Phi_R$. Observe that $\arr\varphi_R=\arr\varphi$ in $R$ and $\arr\varphi_R$ is constant outside $(4/3)R$.

By the Poincar\'e inequality, $\arr\varphi_R\in \dot W\!A^p_{m-1,1}(\R^n)$ for any $1\leq p<\infty$, with $\doublebar{\arr\varphi_R}_{\dot W\!A^p_{m-1,1}(\R^n)} \leq C\doublebar{\nabla_\pureH \arr\varphi}_{L^p((4/3)R)}$. Furthermore, by formula~\eqref{dfn:D:newton:-} for the double layer potential,
\begin{equation*}\D^{\mat A} \arr\varphi -\D^{\mat A}\arr\varphi_R =
\1_-\Phi-\1_-\Phi_R-\Pi^L(\1_-\mat A \nabla^m \Phi) 
+\Pi^L(\1_-\mat A \nabla^m \Phi_R)\end{equation*}
and so $\D^{\mat A} \arr\varphi -\D^{\mat A}\arr\varphi_R\in \dot W^2_{m}(R\times(-\ell(R)/2,\ell(R)/2))$ with $L(\D^{\mat A} \arr\varphi -\D^{\mat A}\arr\varphi_R)=0$ in $R\times (-\ell(R)/2,\ell(R)/2)$.

\end{defn}
We will use this definition again in the proof of Corollary~\ref{cor:D:lusin:+}.

Let $u_Q=\D^{\mat A}\arr\varphi_{8Q}$, so $\arr\varphi=\arr\varphi_{8Q}$ in $8Q$ and $L(u-u_Q)=0$ in $8Q\times(-4\ell(Q),4\ell(Q))$.

Let $q\geq1$. We will impose further conditions on $q$ throughout the proof. By H\"older's inequality,
\begin{equation*}\fint_{2Q} \abs{\Tr_m^+ u_Q(x)}\,dx\leq 
\biggl(\fint_{2Q} \abs{\Tr_m^+ u_Q(x)}^{q}\,dx\biggr)^{1/q}\end{equation*}
and by Lemma~\ref{lem:boundary:D} and the definition~\ref{dfn:D:local} of~$\arr\varphi_{8Q}$, if $\abs{q-2}$ is small enough then
\begin{align*}\fint_{2Q} \abs{\Tr_m^+ \D^{\mat A}\arr\varphi_{8Q}}
&\leq 
C\biggl(\frac{1}{\abs{2Q}}\int_{\R^n} \abs{\nabla_\pureH \arr\varphi_{8Q}}^{q}\biggr)^{1/q}
\leq C\mathcal{M}(\abs{\nabla_\pureH \arr\varphi}^{q})(x_0)^{1/q}
.\end{align*}

To contend with the remaining terms in the bound~\eqref{eqn:D:N:proof}, we will need a decomposition $\arr\varphi=\sum_{j=0}^\infty \arr\varphi_j$ and functions $\Psi_j$ such that $\Tr_{m-1} \Psi_j=\arr\varphi_j$.

Let $\Phi_0=\Phi_{8Q}$ and $\arr\varphi_0=\arr\varphi_{8Q}=\Tr_{m-1}\Phi_0$, and for each $j\geq 1$, let $\Phi_j=\Phi_{2^{j+3}Q}-\Phi_{2^{j+2}Q}$ and $\arr \varphi_j=\Tr_{m-1}\Phi_j = \arr\varphi_{2^{j+3}Q}-\arr\varphi_{2^{j+2}Q}$, where $\Phi_R$ and $\arr\varphi_R$ are as in Definition~\ref{dfn:D:local}.

Then $\arr\varphi=\sum_{j=0}^\infty\arr\varphi_j$,  $\nabla_\pureH\arr\varphi_j=$ outside of $(4/3)2^{j+3}Q\subset2^{j+4}Q$, and \begin{equation*}\doublebar{\nabla_\pureH \arr\varphi_j}_{L^p(\R^n)}\leq C_p\doublebar{\nabla_\pureH \arr\varphi_j}_{L^p(2^{j+4}Q)}\end{equation*}
for any $1\leq p<\infty$. Furthermore, if $j\geq 1$ then $\arr\varphi_j=0$ in $2^{j+2}Q$.

We will need extensions $\Psi_j$ with $\arr\varphi_j=\Tr_{m-1}\Psi_j$.
We have that $\arr\varphi_j=\Tr_{m-1}\Phi_j$; however, we will need $\Psi_j$ to satisfy some bounds in terms of the norms of the boundary values $\arr\varphi_j$, and so we cannot use the obvious extensions $\Psi_j=\Phi_j$.

We define extensions $\Psi_j$ as follows. Let $\varphi_{j,k}(x)=\partial_\dmn^k\Phi_j(x,0)$; we have that $\abs{\nabla_\pureH^{m-k}\varphi_{j,k}(x)}\leq \abs{\arr\varphi_k(x)}$.
Let $\theta:\R^n\mapsto \R$ be smooth, nonnegative, and satisfy the conditions $\int_{\R^n} \theta=1$, $\theta(x)=0$ whenever $\abs{x}>1$, and $\int_{\R^n}x^\zeta \theta(x)\,dx=0$ for all multiindices $\zeta\in(\N_0)^n$ with $1\leq \abs\zeta\leq m-1$. Let $\theta_t(x)=t^{-n}\theta(x/t)$.
Define
\begin{equation*}
H_j(x,t)=\sum_{k=0}^{m-1} \frac{1}{k!} t^k \varphi_{j,k}*\theta_t(x) 
= \sum_{k=0}^{m-1} \frac{1}{k!} t^k \int_{\R^n} \varphi_{j,k}(x-ty)\,\theta(y)\,dy
.\end{equation*}
By the proof of \cite[Lemma~3.3]{BarHM17}, we have that $\Tr_{m-1} H_j=\arr\varphi_j$.
Furthermore, if $x\in \R^n\setminus 2^{j+4}Q$ and $\abs{t}<\dist(x,2^{j+4}Q)$, then $\nabla^{m} H_j(x,t)=0$. Finally, if $j\geq 1$, if $x\in 2^{j+2}Q$, and if $\abs{t}<\dist(x,\R^n\setminus 2^{j+2}Q)$, then $\nabla^{m-1} H_j(x,t)=0$. 
 
Observe that if $\zeta\in (\N_0)^n$ is a multiindex, then $\doublebar{(\partial_\pureH^\zeta \theta)_t}_{L^1(\R^n)}$ is bounded, uniformly in~$t$, and so convolution with $(\partial_\pureH^\zeta \theta)_t$ represents a bounded operator on $L^q(\R^n)$ for any $1\leq q\leq\infty$. Using this fact, it is elementary to show that 
\begin{equation*}
\sup_{t\in\R}\doublebar{\nabla^m H_j(\,\cdot\,,t)}_{L^{q}(\R^n)}\leq C\doublebar{\nabla_\pureH \arr\varphi_j}_{L^{q}(\R^n)}\leq C\doublebar{\nabla_\pureH \arr\varphi}_{L^{q}(2^{j+4}Q)}
.\end{equation*}
This is not true of the function~$\Phi_j$.

However, observe that $\nabla^m H_j$ is not compactly supported. 
Let 
\begin{equation*}\Psi_j(x,t)= (H_j(x,t)-P_j(x,t))\,\eta\biggl(\frac{t}{2^{j+2}\ell(Q)}\biggr)+P_j(x,t),\end{equation*} where $\eta(t)=1$ if $\abs{t}\leq1$ and $\eta(t)=0$ if $\abs{t}\geq2$, and where $P_j(x,t)$ is the polynomial of degree $m-1$ with 
\begin{equation*}\int_{2^{j+5}Q} \int_{-2^{j+3}\ell(Q)}^{-2^{j+2}\ell(Q)} \nabla^k (H_j(x,t)-P_j(x,t))\,dt\,dx=0\end{equation*}
for all $0\leq k\leq m-1$.
Observe that $H_j(x,t)=\Psi_j(x,t)$ whenever $\abs{t}<{-2^{j+2}\ell(Q)}$.
By the Poincar\'e inequality, $\int_{\R^\dmn_-}\abs{\nabla^m \Psi_j}^{q}\leq C 2^j\ell(Q) \doublebar{\nabla_\pureH \arr\varphi}_{L^{q}(2^{j+4}Q)}^{q}$ for any $1\leq q<\infty$.

We now return to the terms in the bound~\eqref{eqn:D:N:proof}.
By Theorem~\ref{thm:Meyers},
\begin{align*}
\biggl(\fint_{Q}\fint_{t_0/6}^{t_0/2}\abs{\nabla^m u_Q}^2\biggr)^{1/2}
&\leq \frac{C}{\ell(Q)^{\pdmn/q}} \doublebar{\nabla^mu_Q}_{L^{q}(\R^\dmn)}
.\end{align*}
By formula~\eqref{dfn:D:newton:-}, if $\abs\alpha=m$, $x\in\R^n$ and $t>0$, then 
\begin{equation*}\partial^\alpha u_Q(x,t)=\partial^\alpha \D^{\mat A} \arr\varphi_0(x,t)=\partial^\alpha \Pi^L(\1_-\mat A\nabla^m \Psi_0)(x,t).\end{equation*} 
By \cite[Lemma~43]{Bar16}, if $p_{L^*,0}^-<q<p_{L,0}^+$, where $p_{L,0}^+$ is as in Theorem~\ref{thm:Meyers} and where $1/p_{L^*,0}^++1/p_{L^*,0}^-=1$, then $\nabla^m \Pi^L$ is bounded $L^{q}(\R^\dmn)\mapsto L^{q}(\R^\dmn)$, and so 
\begin{align*}
\biggl(\fint_{Q}\fint_{t_0/6}^{t_0/2}\abs{\nabla^m u_Q}^2\biggr)^{1/2}
&\leq  \frac{C}{\ell(Q)^{\pdmn/q}} \doublebar{\nabla^m \Psi_0}_{L^{q}(\R^\dmn)}
.\end{align*}
By our bounds on $\Psi_0$,
\begin{align*}\biggl(\fint_{Q}\fint_{t_0/6}^{t_0/2}\abs{\nabla^m u_Q}^2\biggr)^{1/2}
&\leq \frac{C}{\ell(Q)^{n/q}} \doublebar{\nabla_\pureH\arr\varphi_0}_{L^{q}(\R^n)}
\leq \frac{C}{\ell(Q)^{n/q}} \doublebar{\nabla_\pureH\arr\varphi}_{L^{q}(8Q)}
\\&\leq C \mathcal{M} (\abs{\nabla_\pureH \arr\varphi}^{q})(x_0)^{1/q}
.\end{align*}

Finally, let $u_f=u-u_Q=\D^{\mat A}(\arr\varphi-\arr\varphi_0)$. By formula~\eqref{dfn:D:newton:-} for the double layer potential, and because $\arr\varphi=\sum_{j=0}^\infty \arr\varphi_j$, we have that
\begin{align*}
\abs{\partial_t^{m+1} u_f(x,t)}
&=
	\abs[Big]{\partial_t^{m+1} \Pi^L\Bigl(\sum_{j=1}^\infty \1_-\mat A\nabla^m \Psi_j\Bigr)(x,t)}
\\&\leq
	\sum_{j=1}^\infty\abs{\partial_t^{m+1} \Pi^L( \1_-\mat A\nabla^m \Psi_j)(x,t)}
.\end{align*}

Let $x\in 2Q$ and let $-\ell(Q)<t<\ell(Q)$. Recall that if $j\geq 1$, then $\nabla^m \Psi_j=0$ in $\{(y,s):\abs{s}<\dist(y,\R^n\setminus 2^{j+2}Q)\}$, and so $\1_-\mat A\nabla^m \Psi_j=0$ in 
$2^{j+1}Q\times ({-2^{j}\ell(Q)},\infty)$. Thus, $L(\Pi^L( \1_-\mat A\nabla^m \Psi_j))=0$ in this set. If $m$ is large enough, then by the bound~\eqref{eqn:Meyers:lowest},
\begin{align*}
\abs{\partial_t^{m+1} u_f(x,t)}
&\leq
	\sum_{j=1}^\infty
	C\fint_{2^{j+1/3}Q} \fint_{-2^{j-2/3}\ell(Q)}^{2^{j-2/3}\ell(Q)} \abs{\partial_t^{m+1} \Pi^L( \1_-\mat A\nabla^m \Psi_j)(y,s)}\,ds\,dy
.\end{align*}
By H\"older's inequality and the Caccioppoli inequality,
\begin{multline*}
\abs{\partial_t^{m+1} u_f(x,t)}
\\\leq
	\sum_{j=1}^\infty
	\frac{C}{2^j\ell(Q)}
	\biggl(\fint_{2^{j+2/3}Q} \fint_{-2^{j-1/3}\ell(Q)}^{2^{j-1/3}\ell(Q)} \abs{\partial_t^{m} \Pi^L( \1_-\mat A\nabla^m \Psi_j)(y,s)}^2\,ds\,dy\biggr)^{1/2}
\end{multline*}
and by Theorem~\ref{thm:Meyers}, if $q>0$ then
\begin{align*}
\abs{\partial_t^{m+1} u_f(x,t)}
&\leq
	\sum_{j=1}^\infty
	\frac{C}{2^j\ell(Q)}\biggl(\fint_{2^{j+1}Q} \fint_{-2^{j}\ell(Q)}^{2^{j}\ell(Q)} \abs{\partial_t^{m} \Pi^L( \1_-\mat A\nabla^m \Psi_j)}^{q}\biggr)^{1/q}
.\end{align*}
If $p_{L^*,0}^-<q<p_{L,0}^+$, then again by boundedness of $\nabla^m\Pi^L$ and our bounds on $\nabla^m \Psi_j$,
\begin{align*}
\abs{\partial_t^{m+1} u_f(x,t)}
&\leq
	\sum_{j=1}^\infty
	\frac{C}{(2^j\ell(Q))^{n/q+1}} \doublebar{\nabla_\pureH \arr\varphi}_{L^{q}(2^{j+4}Q)}
\leq
	\frac{C}{\ell(Q)}\mathcal{M}(\abs{\nabla_\pureH\arr\varphi}^{q})(x_0)^{1/q}
.\end{align*}
Thus, by Lemma~\ref{lem:N:1},
\begin{equation*}\widetilde N_+(\nabla^m \D^{\mat A}\arr\varphi)(x_0) \leq C \mathcal{M}(\abs{\nabla_\pureH\arr\varphi}^{q})(x_0)^{1/q}
+\mathcal{M}(\Tr_m^+\D^{\mat A}\arr\varphi)\end{equation*}
for any $q$ sufficiently close to~$2$. Choosing $q<p$, we have that by boundedness of the maximal operator~$\mathcal{M}$ and by Lemma~\ref{lem:boundary:D},
\begin{equation*}\doublebar{\widetilde N_+(\nabla^m \D^{\mat A}\arr\varphi)}_{L^p(\R^n)}\leq C \doublebar{\nabla_\pureH \arr\varphi}_{L^p(\R^n)}\end{equation*}
as desired.

\subsection{Reduction to operators of high order}
\label{sec:D:high}

We must now extend to the case of operators of lower order. Recall formulas~\eqref{eqn:S:high} and~\eqref{eqn:S:variant:high}. Our goal is to establish an analogous formula for~$\D^{\mat A}$. That is, we wish to find an operator $\mathcal{O}$ such that
\begin{equation*}\D^{\mat A} \arr\varphi = \Delta^M \D^{\mat{\widetilde A}} (\mathcal{O}\arr\varphi),\end{equation*}
where $\mat{\widetilde A}$ is given by formula~\eqref{eqn:A:high}. We remark that we will need to take somewhat more care in this case, as the natural domain of $\D^{\mat{\widetilde A}}$ is not $\dot B^{2,2}_{1/2}$ but instead a closed proper subset $\smash{\dot W\!A^2_{m-1,1/2}}$.

Let $m\geq 1$ and $M\geq 1$ be integers.
Let $\arr \varphi$ be an array indexed by multiindices of length $m-1$. We define $\mathcal{O}\arr\varphi$ as follows. 

If $\delta\in (\N_0)^\dmn$ is a multiindex with $\abs\delta=m+2M-1$, then there is some nonnegative integer $\ell=\delta_\dmn$ and some multiindex $\xi\in (\N_0)^n$ with $\abs\xi=2M+m-1-\ell$ such that $\delta=(\xi,\ell)$. We define
\begin{align*}
(\mathcal{O}\arr\varphi)_{(\xi,\ell)} &= 0 , & \ell&<2M, \\
(\mathcal{O}\arr\varphi)_{(\xi,\ell)}
&= 
\varphi_{(\xi,\ell-2M)} 
-
\sum_{k=1}^{M}
\sum_{{\abs\zeta=k }}\tilde\kappa_\zeta^M (\mathcal{O}\arr\varphi)_{(\xi+2\zeta,\ell-2k)}
,& 2M\leq \ell&\leq 2M+m-1
\end{align*}
where $\tilde\kappa_\zeta^M=\kappa_{(\zeta,M-k)}=M!/\zeta!(M-k)!$ whenever $\abs\zeta=k$, and where $\kappa_\xi$ and $\zeta!$ are as in Section~\ref{sec:S:high}. 

There are then constants $\mu_{\gamma,\delta}$ depending only on $\gamma$, $\delta$, $m$, $M$ and the dimension $\dmn$ such that
\begin{equation*}(\mathcal{O}\arr\varphi)_\delta=\sum_{\abs\gamma=m-1} \mu_{\gamma,\delta}\varphi_\gamma\end{equation*}
for all $\abs\delta=m+2M-1$. As such, $\mathcal{O}$ is bounded on $L^p(\R^n)$ and $\dot W^p_1(\R^n)$ for any $1\leq p\leq \infty$.

We now show that if $\arr\varphi$ is in the domain of $\D^{\mat A}$, then $\mathcal{O}\arr\varphi$ is in the domain of~$\D^{\mat{\widetilde A}}$.

\begin{lem} Let $m\geq 1$ and let $M\geq 1$.

If $\arr\varphi=\Tr_{m-1}F$ for some $F\in C^\infty_0(\R^\dmn)$, then $\mathcal{O}\arr\varphi=\Tr_{m+2M-1}H$  for some $H\in C^\infty_0(\R^\dmn)$.

If $\arr\varphi=\Tr_{m-1}F$ for some $F\in \dot W^2_m(\R^\dmn)$, then $\mathcal{O}\arr\varphi=\Tr_{m+2M-1}H$  for some $H\in \dot W^2_{m+2M}(\R^\dmn)$.
\end{lem}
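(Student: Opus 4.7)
My plan is to identify $\mathcal{O}\arr\varphi$ with the purely vertical boundary traces of an explicitly constructed function $H$. The first step is to rewrite the recursion for $\mathcal{O}$ in a more transparent form. Fixing any $F$ with $\arr\varphi = \Tr_{m-1} F$, I would define scalar functions $\psi_\ell : \R^n \to \R$ for $0 \le \ell \le m + 2M - 1$ by
\begin{equation*}
\psi_\ell = 0 \text{ if } \ell < 2M, \qquad \psi_\ell = \partial_t^{\ell-2M} F(\,\cdot\,,0) - \sum_{k=1}^{M} \binom{M}{k}\, \Delta_\pureH^k\, \psi_{\ell-2k} \text{ if } \ell \ge 2M.
\end{equation*}
The claim is that $(\mathcal{O}\arr\varphi)_{(\xi,\ell)} = \partial_x^\xi \psi_\ell$ for every $(\xi,\ell)$ with $\abs{\xi}+\ell = m+2M-1$. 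The proof is an induction on $\ell$; the combinatorial input is the multinomial identity
\begin{equation*}
\sum_{\abs{\zeta}=k,\ \zeta\in(\N_0)^n} \tilde\kappa_\zeta^M\, \partial_\pureH^{2\zeta} = \binom{M}{k}\,\Delta_\pureH^k,
\end{equation*}
which follows by writing $\tilde\kappa_\zeta^M = \binom{M}{k}\cdot k!/\zeta!$ and expanding $(\partial_{x_1}^2+\dots+\partial_{x_n}^2)^k$. Inserting the inductive hypothesis into the defining recursion for $\mathcal{O}$, the inner sums over $\abs{\zeta}=k$ collapse via this identity and the outer structure reassembles to $\partial_x^\xi \psi_\ell$.

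Granting the identity, it suffices to produce $H$ of the appropriate regularity whose vertical boundary traces satisfy $\partial_t^\ell H(x,0) = \psi_\ell(x)$ for $0 \le \ell \le m+2M-1$: for then each component of $\Tr_{m+2M-1} H$ at the multiindex $\delta=(\xi,\ell)$ equals $\partial_x^\xi \partial_t^\ell H(x,0) = \partial_x^\xi \psi_\ell(x) = (\mathcal{O}\arr\varphi)_\delta$. For the compactly supported case, every $\psi_\ell$ is a finite linear combination of spatial derivatives of the traces $\partial_t^{\ell'} F(\,\cdot\,,0)$ with $0 \le \ell' \le m-1$, hence lies in $C^\infty_0(\R^n)$. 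I would set
\begin{equation*}
H(x,t) = \eta(t) \sum_{\ell=2M}^{m+2M-1} \frac{t^\ell}{\ell!}\, \psi_\ell(x),
\end{equation*}
where $\eta \in C^\infty_0(\R)$ satisfies $\eta \equiv 1$ near $t=0$ and $\eta^{(j)}(0) = 0$ for all $j \ge 1$. A direct differentiation under the finite sum confirms $\partial_t^\ell H(\,\cdot\,,0) = \psi_\ell$ for every relevant $\ell$, and $H \in C^\infty_0(\R^\dmn)$ by compact support of the $\psi_\ell$.

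For the Sobolev case, the essential check is that $\psi_\ell \in \dot B^{2,2}_{m+2M-\ell-1/2}(\R^n)$ for each $\ell$, after which the classical trace realization theorem (the converse of the standard trace inequality for $\dot W^2_{m+2M}$) produces $H \in \dot W^2_{m+2M}(\R^\dmn)$ with $\partial_t^\ell H(\,\cdot\,,0) = \psi_\ell$. The Besov regularity is proved by induction: for $F \in \dot W^2_m(\R^\dmn)$ the trace $\partial_t^{\ell-2M} F(\,\cdot\,,0)$ lies in $\dot B^{2,2}_{m-(\ell-2M)-1/2} = \dot B^{2,2}_{m+2M-\ell-1/2}$, while applying $\Delta_\pureH^k$ to $\psi_{\ell-2k} \in \dot B^{2,2}_{m+2M-(\ell-2k)-1/2}$ costs precisely $2k$ Besov derivatives and so also lands in $\dot B^{2,2}_{m+2M-\ell-1/2}$. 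I expect the main conceptual obstacle to be the first step---recognizing that the elaborate recursion defining $\mathcal{O}$ is really encoding the boundary values $\partial_t^\ell H|_{t=0}$ of any $H$ satisfying $\partial_t^j H|_{t=0} = 0$ for $j < 2M$ together with $\Tr_{m-1}(\Delta^M H) = \arr\varphi$; once this reformulation is in hand, both constructions are mechanical.
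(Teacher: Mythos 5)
Your proof is correct and follows essentially the same route as the paper: you introduce the same scalar family ($\psi_\ell$ is the paper's $\Phi_\ell$), prove the identification $(\mathcal{O}\arr\varphi)_{(\xi,\ell)}=\partial_\pureH^\xi\psi_\ell$ by the same induction using the identity $\sum_{\abs\zeta=k}\tilde\kappa_\zeta^M\partial_\pureH^{2\zeta}=\binom{M}{k}\Delta_\pureH^k$, and then build $H$ by a polynomial-in-$t$ extension with a cutoff (for $C^\infty_0$) or by checking the Besov regularity $\psi_\ell\in\dot B^{2,2}_{m+2M-\ell-1/2}$ and invoking a trace-realization construction (for $\dot W^2_m$). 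The only cosmetic difference is that you cite the trace theorem abstractly where the paper writes out the mollifier extension $H(x,t)=\sum_\ell\frac{t^\ell}{\ell!}\Phi_\ell*\rho_t(x)$ explicitly.
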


\begin{proof}
Let $F_j = \Trace^+\partial_\dmn^j F$. 
If $F\in C^\infty_0(\R^\dmn)$ then $F_j\in C^\infty_0(\R^n)$. If $F\in \dot W^2_m(\R^\dmn)$, then
$\partial_\dmn^j F \in \dot W^2_{m-j}(\R^\dmn)$ for all $0\leq j\leq m-1$, and so $F_j= \Trace^+ \partial_\dmn^j F$ lies in the space $\dot B^{2,2}_{m-j-1/2}(\R^n)$. 

Observe that if $\abs\gamma=m$ and $\gamma=(\xi,j)$ for some $0\leq j\leq m-1$ and some $\xi\in (\N_0)^n$, then $\varphi_\gamma=\partial_\pureH^\xi F_j$.

\textbf{Claim.} There exist functions $\Phi_\ell$, in either $C^\infty_0(\R^n)$ or $\dot B^{2,2}_{m+2M-\ell-1/2}(\R^n)$, such that 
\begin{equation}\label{eqn:O:proof}
(\mathcal{O}\arr\varphi)_{(\xi,\ell)} = \partial_\pureH^\xi \Phi_\ell
.\end{equation}
We will prove this by induction on~$\ell$.

If $\ell<2M$ (and in particular if $\ell=0$ or $\ell=1$), let $\Phi_\ell=0$. By definition of $\mathcal{O}$, formula~\eqref{eqn:O:proof} is valid for all $\ell<2M$.

If $\ell\geq 2M$, then by the induction hypothesis
\begin{align*}
(\mathcal{O}\arr\varphi)_{(\xi,\ell)}
&= 
\partial_\pureH^\xi F_{\ell-2M} 
-
\sum_{k=1}^{M}
\sum_{{\abs\zeta=k }} \widetilde\kappa_\zeta^M
\partial^{\xi+2\zeta} \Phi_{\ell-2k}
.\end{align*}
Recall from Section~\ref{sec:S:high} that $\Delta^M =\sum_{\abs\delta=M} \kappa_\delta \partial^{2\delta}$. 
Observe that $\Delta^M=(\Delta_\pureH+\partial_\dmn^2)^M$, where $\Delta_\pureH$ denotes the Laplacian in $\R^n$ or in the $n$ horizontal variables in $\R^\dmn$, and so by definition of $\widetilde \kappa_\zeta^M$ and the binomial theorem,
\begin{equation*}\sum_{k=0}^M \sum_{\abs\zeta=k} \widetilde\kappa_\zeta^M \partial_\pureH^{2\zeta}\partial_\dmn^{2M-2k}
=\sum_{k=0}^M \binom{M}{k}\Delta_\pureH^{k}\partial_\dmn^{2M-2k}. \end{equation*}
Therefore,
\begin{equation*}\sum_{\abs\zeta=k}\widetilde\kappa_\zeta^M \partial^{2\zeta} = \binom{M}{k} \Delta_\pureH^k\end{equation*}
and so
\begin{align*}
(\mathcal{O}\arr\varphi)_{(\xi,\ell)}
&= 
\partial_\pureH^\xi F_{\ell-2M} 
-
\sum_{k=1}^{M}
\partial^{\xi} \Delta_\pureH^k\Phi_{\ell-2k}
.\end{align*}
Taking $\Phi_\ell=F_{\ell-2M} 
-
\sum_{k=1}^{M}
\Delta_\pureH^k\Phi_{\ell-2k}$, we see that the claim is valid.

We now must assemble the function $H$ from the functions $\Phi_{\ell}$.

If $F\in C^\infty_0(\R^\dmn)$, let $\eta$ be a smooth cutoff function, and let
\begin{equation*}H(x,t)=\sum_{\ell=0}^{m+2M-1} \frac{1}{\ell!}t^\ell\,\eta(t)\, \Phi_\ell(x).\end{equation*}
If $F\in \dot W^2_m(\R^\dmn)$, and so $\Phi_\ell \in \dot B^{2,2}_{m+2M-\ell-1/2}(\R^n)$ for all $0\leq \ell\leq m+2M-1$, it is well known that there is a function $H\in \dot W^2_{m+2M}(\R^\dmn)$ such that $\partial_\dmn^\ell H(x,0)=\Phi_\ell(x)$ for all such~$\ell$. 
For example, as in Lemma~\ref{lem:D:N:high} we may let 
\begin{equation*}H(x,t)=\sum_{\ell=0}^{m+2M-1} \frac{1}{\ell!}t^\ell\,\Phi_\ell*\rho_t(x)\end{equation*}
where $\rho_t(x)=t^{-n}\rho(x/t)$ for some function $\rho$ that is smooth, compactly supported, and satisfies $\int_{\R^n}\rho=1$, $\int_{\R^n}x^\gamma\rho(x)\,dx=0$ for all $\gamma$ with $1\leq \abs\gamma\leq m-1$. An elementary argument involving the Fourier transform completes the proof.
\end{proof}

We have now shown that $\mathcal{O}\arr \varphi$ is the trace of some function~$H$. We now make explicit the relationship between $\arr\varphi$ and~$H$.

\begin{lem}\label{lem:high:3} Let $m\geq 1$ and let $M\geq 1$.
Let $\arr\varphi$ and $H$ be as in the previous lemma.
Then $\arr\varphi=\Tr_{m-1}\Delta^M H$.
\end{lem}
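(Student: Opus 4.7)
The plan is a direct computation. Recall from the previous proof that $H$ was constructed so that $\partial_\dmn^\ell H(x,0)=\Phi_\ell(x)$ for every $0\le \ell\le m+2M-1$, and that the functions $\Phi_\ell$ are related to the boundary data of $F$ through the recursion inherited from the definition of $\mathcal{O}$ via the identity $(\mathcal{O}\arr\varphi)_{(\xi,\ell)}=\partial_\pureH^\xi\Phi_\ell$.

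First I would expand $\Delta^M=(\Delta_\pureH+\partial_\dmn^2)^M$ in the form
$$\Delta^M=\sum_{k=0}^M \sum_{\abs\zeta=k}\tilde\kappa_\zeta^M\,\partial_\pureH^{2\zeta}\partial_\dmn^{2M-2k},$$
which is immediate from the definition of $\tilde\kappa_\zeta^M$ and was already used in the previous proof. Fix a multiindex $\gamma$ with $\abs\gamma=m-1$, write $\gamma=(\xi,j)$ with $\xi\in(\N_0)^n$ and $0\le j\le m-1$, apply $\partial^\gamma=\partial_\pureH^\xi\partial_\dmn^j$ to the above expansion, and evaluate at $t=0$ using $\partial_\dmn^\ell H(\,\cdot\,,0)=\Phi_\ell$. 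Separating the $k=0$ term (for which $\tilde\kappa_0^M=1$) gives
$$\partial^\gamma\Delta^M H(x,0)=\partial_\pureH^\xi\Phi_{j+2M}(x)+\sum_{k=1}^M\sum_{\abs\zeta=k}\tilde\kappa_\zeta^M\,\partial_\pureH^{\xi+2\zeta}\Phi_{j+2M-2k}(x).$$

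On the other hand, since $2M\le j+2M\le 2M+m-1$, the defining recursion of $\mathcal{O}$ applied at $\ell=j+2M$, together with the identity $(\mathcal{O}\arr\varphi)_{(\xi,\ell)}=\partial_\pureH^\xi\Phi_\ell$ from the previous lemma, yields
$$\partial_\pureH^\xi\Phi_{j+2M}=(\mathcal{O}\arr\varphi)_{(\xi,j+2M)}=\varphi_{(\xi,j)}-\sum_{k=1}^M\sum_{\abs\zeta=k}\tilde\kappa_\zeta^M\,\partial_\pureH^{\xi+2\zeta}\Phi_{j+2M-2k}.$$
Substituting this into the previous display makes the two sums cancel, leaving $\partial^\gamma\Delta^M H(x,0)=\varphi_{(\xi,j)}=\varphi_\gamma$. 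Since $\gamma$ with $\abs\gamma=m-1$ was arbitrary, this is the desired identity $\arr\varphi=\Tr_{m-1}\Delta^M H$.

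The argument is purely combinatorial: the recursion that defines $\mathcal{O}\arr\varphi$ was engineered precisely to invert the action of $\Delta^M$ on the Dirichlet tower $\{\Phi_\ell\}_{\ell=0}^{m+2M-1}$ of $H$, so I do not anticipate any real obstacle. The only thing that needs any care is verifying the index bookkeeping — in particular that $j+2M$ always falls into the range $[2M,2M+m-1]$ where the nontrivial branch of the recursion for $(\mathcal{O}\arr\varphi)_{(\xi,\ell)}$ is in force, which it does whenever $0\le j\le m-1$.
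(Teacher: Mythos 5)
Your proof is correct and essentially the same argument as the paper's: both rely on the recursive definition of $\mathcal O$, the identity $(\mathcal O\arr\varphi)_{(\xi,\ell)}=\partial_\pureH^\xi\Phi_\ell$, the boundary conditions $\partial_\dmn^\ell H(\,\cdot\,,0)=\Phi_\ell$, and the binomial expansion $\Delta^M=\sum_{k=0}^M\sum_{\abs\zeta=k}\tilde\kappa_\zeta^M\partial_\pureH^{2\zeta}\partial_\dmn^{2M-2k}$. The paper runs the computation from $\varphi_\gamma$ forward to $\partial^\gamma\Delta^M H(x,0)$ by inserting the rearranged recursion $\varphi_{(\xi,j)}=\sum_{k=0}^M\sum_{\abs\zeta=k}\tilde\kappa_\zeta^M(\mathcal O\arr\varphi)_{(\xi+2\zeta,j+2M-2k)}$, while you start from $\partial^\gamma\Delta^M H(x,0)$ and make the cancellation explicit; these are the same computation read in opposite directions.
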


\begin{proof}
Let $\gamma=(\xi,j)$ for some $0\leq j\leq m-1$ and some $\abs\xi=m-1-j$. Recall that $\widetilde \kappa_{\vec 0}^M = \kappa_{(\vec 0,M)} = 1$. We then have that
\begin{align*}\varphi_\gamma(x)
&=\varphi_{(\xi,j)}(x) = \sum_{k=0}^M \sum_{\abs\zeta=k} \widetilde \kappa_\zeta^M (\mathcal O \arr\varphi)_{\xi+2\zeta,j+2M-2k}(x)
.\end{align*}
By definition of~$H$,
\begin{align*}\varphi_\gamma(x)
&= \sum_{k=0}^M \sum_{\abs\zeta=k} \widetilde \kappa_\zeta^M 
\partial_\pureH^{\xi}\partial_\dmn^j \partial_\pureH^{2\zeta}\partial_\dmn^{2M-2k} H(x,0)
\end{align*}
and by definition of $\widetilde\kappa_\zeta^M$,
\begin{align*}\varphi_\gamma(x)
&= 
\partial_\pureH^\xi \partial_\dmn^j \Delta^M H(x,0) 
=\partial^\gamma\Delta^M H(x,0)
\end{align*}
as desired.
\end{proof}

Finally, we establish the analogue to formulas~\eqref{eqn:S:high} and~\eqref{eqn:S:variant:high} for the double layer potential.

\begin{lem}\label{lem:D:high} Let $L$ be an operator of the form~\eqref{eqn:divergence} of order~$2m$ associated to $t$-independent coefficients~$\mat A$ that satisfy the bounds \eqref{eqn:elliptic:bounded} and~\eqref{eqn:elliptic}. Let $M\geq 1$ and let $\widetilde{\mat A}$ be as in formula~\ref{eqn:A:high}.

Let $H\in \dot W^2_{m+2M}(\R^\dmn)$. Then
\begin{equation*}\D^{\mat A} (\Tr_{m-1} \Delta^M H) = \Delta^M \D^{\widetilde{\mat A}} (\Tr_{m+2M-1} H).\end{equation*}

In particular, by Lemma~\ref{lem:high:3}, if $\arr\varphi\in \dot W\!A^2_{m-1,1/2}(\R^n)$, then
\begin{equation*}\D^{\mat A} \arr\varphi = \Delta^M \D^{\widetilde{\mat A}} (\mathcal{O}\arr\varphi).\end{equation*}
\end{lem}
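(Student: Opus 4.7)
The plan is to reduce the identity to a comparison of Newton potentials and verify that comparison from the defining weak equation \eqref{eqn:newton}.

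First I would observe that the second assertion follows from the first via Lemma~\ref{lem:high:3}: given $\arr\varphi\in\dot W\!A^2_{m-1,1/2}(\R^n)$, the preceding lemma constructs $H\in\dot W^2_{m+2M}(\R^\dmn)$ with $\mathcal{O}\arr\varphi=\Tr_{m+2M-1}H$, and then Lemma~\ref{lem:high:3} yields $\arr\varphi=\Tr_{m-1}\Delta^M H$, so applying the first assertion to $H$ delivers the desired identity. For the first assertion, expand both sides via \eqref{dfn:D:newton:+} using the natural extensions $F=\Delta^M H\in \dot W^2_m(\R^\dmn)$ on the left and $F=H\in \dot W^2_{m+2M}(\R^\dmn)$ on the right. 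Each side then separates into the common function $-\1_+\Delta^M H$ plus a Newton potential, reducing the problem to
\[\Delta^M \Pi^{\widetilde L}(\1_+\widetilde{\mat A}\nabla^{m+2M} H)=\Pi^L(\1_+\mat A\nabla^m\Delta^M H)\qquad\text{in }\dot W^2_m(\R^\dmn).\]

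Write $W=\Pi^{\widetilde L}(\1_+\widetilde{\mat A}\nabla^{m+2M}H)\in \dot W^2_{m+2M}(\R^\dmn)$, so $\Delta^M W\in \dot W^2_m(\R^\dmn)$. By the uniqueness clause in the weak equation \eqref{eqn:newton} characterizing $\Pi^L$, it suffices to check
\[\langle \nabla^m\varphi,\mat A\nabla^m\Delta^M W\rangle_{\R^\dmn}=\langle \nabla^m\varphi,\1_+\mat A\nabla^m\Delta^M H\rangle_{\R^\dmn}\]
for every $\varphi\in \dot W^2_m(\R^\dmn)$. The key input is that the identity \eqref{eqn:A:high} defining $\widetilde{\mat A}$ is in fact a pointwise (a.e.)\ identity on $\R^\dmn$: expanding $\Delta^M=\sum_{\abs\zeta=M}\kappa_\zeta \partial^{2\zeta}$ in its right-hand side and collecting coefficients yields an explicit formula for $\widetilde A_{\alpha'\beta'}$ in terms of $A_{\alpha\beta}$ (this is \cite[formula~11.1]{BarHM17}), and consequently the integrand relation survives multiplication by $\1_+$. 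To verify the displayed equation, I would first test against $\varphi=\Delta^M\varphi_0$ with $\varphi_0\in \dot W^2_{m+2M}(\R^\dmn)$: applying \eqref{eqn:A:high} converts the pairing to $\langle \nabla^{m+2M}\varphi_0,\widetilde{\mat A}\nabla^{m+2M}W\rangle$; the defining equation for $W$ replaces this with $\langle \nabla^{m+2M}\varphi_0,\1_+\widetilde{\mat A}\nabla^{m+2M}H\rangle$; and the restricted pointwise identity then delivers $\langle \nabla^m\Delta^M\varphi_0,\1_+\mat A\nabla^m\Delta^M H\rangle$, as required.

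The remaining step, which I expect to be the main technical point, is to extend from test functions of the special form $\Delta^M\varphi_0$ to arbitrary $\varphi\in \dot W^2_m(\R^\dmn)$. What is needed is surjectivity of $\Delta^M\colon \dot W^2_{m+2M}(\R^\dmn)\to \dot W^2_m(\R^\dmn)$ modulo polynomials of degree at most $m-1$ (which do not affect $\nabla^m\varphi$). This is a Fourier-analytic fact: one defines $\varphi_0$ by $|\xi|^{2M}\widehat{\varphi_0}(\xi)=(-1)^M\widehat\varphi(\xi)$, and notes that $\nabla^{m+2M}\varphi_0\in L^2(\R^\dmn)$ exactly when $\nabla^m\varphi\in L^2(\R^\dmn)$; the only delicate issue is the behavior at $\xi=0$, which is absorbed into the quotient by polynomials. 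Once this surjectivity is in place, both sides of the displayed equation are bounded linear functionals of $\nabla^m\varphi\in L^2(\R^\dmn)$ and agree on the image of $\Delta^M$, hence agree everywhere, which completes the reduction and therefore the proof.
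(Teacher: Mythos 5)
Your proof is correct and follows essentially the same route as the paper's: both reduce, via formula~\eqref{dfn:D:newton:+} (the paper uses the equivalent \eqref{dfn:D:newton:-}), to the Newton-potential identity and verify it by testing the defining weak equation~\eqref{eqn:newton} against $\varphi=\Delta^M\Phi$ and passing $\Delta^M$ back and forth through the definition~\eqref{eqn:A:high} of~$\widetilde{\mat A}$. You additionally spell out two points the paper leaves implicit, namely that \eqref{eqn:A:high} is a pointwise identity (so it persists under multiplication by an indicator) and that $\Delta^M\colon\dot W^2_{m+2M}(\R^\dmn)\to\dot W^2_m(\R^\dmn)$ is surjective modulo polynomials, which the paper simply asserts.
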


\begin{proof}
Recall that by formula~\eqref{dfn:D:newton:-} for the double layer potential
\begin{equation*}\D^{\mat A}(\Tr_{m-1} \Delta^M H) = \1_-\Delta^M H -\Pi^L(\1_-\mat A\nabla^m\Delta^M H)\end{equation*}
and
\begin{equation*}\Delta^M \D^{\widetilde{\mat A}} (\Tr_{m+2M-1} H)
= \1_-\Delta^M H - \Delta^M \Pi^{\widetilde L} (\1_-\mat{\widetilde A}\nabla^{m+2M} H).\end{equation*}
Thus, we need only show that $\Delta^M \Pi^{\widetilde L} (\1_-\mat{\widetilde A}\nabla^{m+2M} H)=\Pi^L(\1_-\mat A\nabla^m\Delta^M H)$.

By the definition~\eqref{eqn:newton} of the Newton potential, we have that 
\begin{equation*}u=\Pi^L(\1_-\mat A\nabla^m\Delta^M H)\end{equation*}
is the unique element of $\dot W^2_m(\R^\dmn)$ that satisfies
\begin{equation*}\langle \nabla^m\varphi,\mat A\nabla^m u\rangle_{\R^\dmn} = \langle \nabla^m\varphi, \mat A\nabla^m \Delta^M H\rangle_{\R^\dmn_-}\end{equation*}
for all $\varphi\in \dot W^2_m(\R^\dmn)$.

Choose some such~$\varphi$. Then there is a $\Phi\in \dot W^2_{m+2M}(\R^\dmn)$ such that $\Delta^M\Phi=\varphi$. 
Let $v=\Delta^M \Pi^{\widetilde L} (\1_-\mat{\widetilde A}\nabla^{m+2M} H)$. Then
\begin{equation*}\langle \nabla^m\varphi, \mat A\nabla^m v\rangle_{\R^\dmn} 
= \langle \nabla^m \Delta^M \Phi, \mat A\nabla^m \Delta^M \Pi^{\widetilde L} (\1_-\mat{\widetilde A}\nabla^{m+2M} H)\rangle_{\R^\dmn}.\end{equation*}
It is clear from the definition of~$\widetilde L$ in Section~\ref{sec:S:high} that 
\begin{equation*}\langle \nabla^m\varphi, \mat A\nabla^m v\rangle_{\R^\dmn} 
= \langle \nabla^{m+2M} \Phi, \mat {\widetilde A}\nabla^{m+2M} \Pi^{\widetilde L} (\1_-\mat{\widetilde A}\nabla^{m+2M} H)\rangle_{\R^\dmn}.\end{equation*}
Again by formula~\eqref{eqn:newton},
\begin{equation*}\langle \nabla^m\varphi, \mat A\nabla^m v\rangle_{\R^\dmn} 
= \langle \nabla^{m+2M} \Phi, \mat{\widetilde A}\nabla^{m+2M} H\rangle_{\R^\dmn_-}\end{equation*}
and again by the definition of~$\widetilde L$,
\begin{equation*}\langle \nabla^m\varphi, \mat A\nabla^m v\rangle_{\R^\dmn} 
= \langle \nabla^{m}\Delta^M \Phi, \mat{A}\nabla^{m} \Delta^M H\rangle_{\R^\dmn_-}= \langle \nabla^{m}\varphi, \mat{A}\nabla^{m} \Delta^M H\rangle_{\R^\dmn_-}.\end{equation*}
This equation is valid for all $\varphi\in \dot W^2_m(\R^\dmn)$, and so $u=v$, as desired.
\end{proof}

\subsection{Nontangential and area integral estimates}
\label{sec:D:final}

By Lemmas~\ref{lem:D:N:high} and~\ref{lem:D:high}, we have that the bound \eqref{eqn:D:N:intro} is valid; that is, if $L$ and $\mat A$ are as in Theorem~\ref{thm:potentials}, then
there is some $\varepsilon>0$ such that if $2-\varepsilon< p < 2+\varepsilon$, then
\begin{align}
\label{eqn:D:N}
\doublebar{\widetilde N_+(\nabla^m\D^{\mat A}\arr \varphi)}_{L^p(\R^n)} \leq C_p \doublebar{\arr \varphi}_{\dot W\!A^p_{m-1,1}(\R^n)}
\end{align}
for any $\arr\varphi$ that satisfies $\arr\varphi=\Tr_{m-1} \Phi$ for some $\Phi$ smooth and compactly supported. By density, we may extend $\D^{\mat A}$ to an operator from ${\dot W\!A^p_{m-1,1}(\R^n)}$ to $\dot W^2_{m,loc}(\R^\dmn_+)$ that satisfies this bound.

Using this bound, it is straightforward to establish the bounds \eqref{eqn:D:N:rough:intro}--\eqref{eqn:D:lusin:rough:intro}.

\begin{cor}\label{cor:D:lusin:+} Let $L$ and $\mat A$ be as in Theorem~\ref{thm:potentials}. Then the bound~\eqref{eqn:D:N:rough:intro} is valid; that is, there is some $\varepsilon>0$ such that 
\begin{align}
\label{eqn:D:N:rough}
\doublebar{\widetilde N_+(\nabla^{m-1}\D^{\mat A}\arr f)}_{L^p(\R^n)} \leq C_p \doublebar{\arr f}_{\dot W\!A^p_{m-1,0}(\R^n)}
&&\text{if }2-\varepsilon\leq p<2+\varepsilon,
\end{align}
whenever $\arr f=\Tr_{m-1} F$ for some $F\in C^\infty_0(\R^\dmn)$.

Furthermore, there is some $\varepsilon>0$ such that the bounds~\eqref{eqn:D:lusin:intro} and~\eqref{eqn:D:lusin:rough:intro} are valid; that is,
\begin{align}
\label{eqn:D:lusin:+}
\doublebar{\mathcal{A}_2^+(t\nabla^m\partial_t\D^{\mat A}\arr \varphi)}_{L^p(\R^n)} &\leq C_p \doublebar{\arr \varphi}_{\dot W\!A^p_{m-1,1}(\R^n)}
&&\text{if }2\leq p<2+\varepsilon
,\\
\label{eqn:D:lusin:rough:+}
\doublebar{\mathcal{A}_2^+(t\nabla^m\D^{\mat A}\arr f)}_{L^p(\R^n)} &\leq C_p \doublebar{\arr f}_{\dot W\!A^p_{m-1,0}(\R^n)}
&&\text{if }2\leq p<2+\varepsilon,
\end{align}
whenever $\arr f=\Tr_{m-1} F$ and $\arr \varphi=\Tr_{m-1} \Phi$ for some $F$, $\Phi\in C^\infty_0(\R^\dmn)$.
\end{cor}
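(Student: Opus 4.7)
The strategy is to first establish the nontangential bound \eqref{eqn:D:N:rough}, which is the only new nontangential estimate beyond \eqref{eqn:D:N}, and then to deduce the two area integral bounds \eqref{eqn:D:lusin:+} and \eqref{eqn:D:lusin:rough:+} by invoking Lemma~\ref{lem:lusin:+} with the localizations from Definition~\ref{dfn:D:local}.

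For \eqref{eqn:D:N:rough}, I would adapt the proof of Lemma~\ref{lem:D:N:high}, applying Lemma~\ref{lem:N:1} to $\widetilde N_+(\nabla^{m-1}\D^{\mat A}\arr f)$ and then Lemma~\ref{lem:iterate} with $j = m-1$. Fix $x_0 \in \R^n$, $t_0 > 0$, and $Q = Q(x_0, t_0)$, decompose $\arr f = \arr f_0 + \sum_{j \geq 1}\arr f_j$ into a local piece and annular pieces supported in $2^{j+4}Q \setminus 2^{j+2}Q$, and set $u_Q = \D^{\mat A}\arr f_0$. By choosing extensions $\Psi_j$ of $\arr f_j$ that vanish in $8Q \times \R$ for $j \geq 1$ and applying \eqref{dfn:D:newton:-}, one obtains $L(u - u_Q) = 0$ in $8Q \times \R$, so Lemma~\ref{lem:iterate} applied on a cube $\widetilde Q$ meeting the boundary controls $\fint_{\widetilde Q}|\nabla^{m-1}(u-u_Q)|^2$ by $\ell(Q)^2(\fint_{2\widetilde Q}|\partial_t^m(u-u_Q)|)^2$ plus a trace term at $\tau = 0$. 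The trace term is dominated by $\mathcal{M}(\Tr_{m-1}^+u)(x_0)$ and $\mathcal{M}(|\arr f|^q)(x_0)^{1/q}$ via Lemma~\ref{lem:boundary:D} applied to both $u$ and $u_Q$. The $\partial_t^m$ term is handled exactly as the $\partial_t^{m+1}u_f$ term in Lemma~\ref{lem:D:N:high}: since $\partial_t^m$ is a single component of $\nabla^m$ and $\nabla^m \D^{\mat A}\arr f_j = -\nabla^m \Pi^L(\1_-\mat A\nabla^m\Psi_j)$ in $\R^\dmn_+$, Theorem~\ref{thm:Meyers}, the Caccioppoli inequality, and $L^q$-boundedness of $\nabla^m\Pi^L$ (\cite[Lemma~43]{Bar16}) yield $2^{-j}$-decay in $j$ and a final pointwise bound by $\ell(Q)^{-1}\mathcal{M}(|\arr f|^q)(x_0)^{1/q}$ for $q$ slightly less than $p$.

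For the area integral bounds, I invoke Lemma~\ref{lem:lusin:+}. Take $u = \partial_t\D^{\mat A}\arr\varphi$ and $u_Q = \partial_t\D^{\mat A}\arr\varphi_Q$ for \eqref{eqn:D:lusin:+}, or $u = \D^{\mat A}\arr f$ and $u_Q = \D^{\mat A}\arr f_Q$ for \eqref{eqn:D:lusin:rough:+}, where $\arr\varphi_Q$ and $\arr f_Q$ are the localizations from Definition~\ref{dfn:D:local}. By that definition, $L(u - u_Q) = 0$ in a neighborhood of $Q \times \{0\}$, as required. The $L^2$-area integral bounds on $\nabla^m u_Q$ come from \eqref{eqn:D:lusin:2} or \eqref{eqn:D:lusin:rough:2}, combined with $\doublebar{\arr\varphi_Q}_{\dot W\!A^2_{m-1,1}} \leq C\doublebar{\nabla_\pureH\arr\varphi}_{L^2(4Q)}$ and $\doublebar{\arr f_Q}_{\dot W\!A^2_{m-1,0}} \leq C\doublebar{\arr f}_{L^2(4Q)}$. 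The required $L^2$-bounds on $\widetilde N_*(\nabla^{m-1}u_Q)$ come from \eqref{eqn:D:N} (using that $\nabla^{m-1}\partial_t\D^{\mat A}\arr\varphi_Q$ is a component of $\nabla^m\D^{\mat A}\arr\varphi_Q$) and from the just-established \eqref{eqn:D:N:rough}, together with their lower half-space analogues from Section~\ref{sec:lower}. The conclusion of Lemma~\ref{lem:lusin:+} then yields the desired $L^p$ bound, with the residual $\widetilde N_*(\nabla^{m-1}u)$ term absorbed via \eqref{eqn:D:N} or \eqref{eqn:D:N:rough}.

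The main obstacle is the construction and handling of the extensions $\Psi_j$ in the proof of \eqref{eqn:D:N:rough}: because $\arr f$ has one fewer derivative than $\arr\varphi$ in Lemma~\ref{lem:D:N:high}, a mollification-type extension of $\arr f_j$ yields an extra factor of $\ell(2^jQ)^{-1}$ in the $L^q$-norm of $\nabla^m\Psi_j$, and one must verify that this loss is compensated by the explicit factor of $\ell(Q)$ coming from Lemma~\ref{lem:iterate} and by the accumulated $\ell(2^jQ)^{-\pdmn/q}$ gains coming from the far-field decay estimates via Theorem~\ref{thm:Meyers}. Once this bookkeeping is verified, the rest of the argument is a direct translation of the Lemma~\ref{lem:D:N:high} proof.
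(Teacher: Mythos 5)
Your plan for \eqref{eqn:D:lusin:+} matches the paper exactly (the paper takes $u_Q=\partial_\dmn\D^{\mat A}\arr \varphi_{3Q}$, a cosmetic difference from your $\arr\varphi_Q$), and your plan for \eqref{eqn:D:lusin:rough:+} via a direct application of Lemma~\ref{lem:lusin:+} with $u=\D^{\mat A}\arr f$ is a viable alternative to the paper's reduction, \emph{provided} the nontangential estimate \eqref{eqn:D:N:rough} has already been established. The paper instead handles \eqref{eqn:D:N:rough} and \eqref{eqn:D:lusin:rough:+} together by a single algebraic reduction: formula~\eqref{eqn:D:D:S} lets one write $\nabla^{m-1}\D^{\mat A}(\Tr_{m-1}F) = \nabla^{m-1}\s^L_\nabla(\mat A\Tr_m\Phi) + \nabla^{m-1}\partial_t\D^{\mat A}(\Tr_{m-1}\Phi)$ for a suitable $\Phi$ with $\doublebar{\mat A\Tr_m \Phi}_{L^p}+ \doublebar{\Tr_{m-1}\Phi}_{\dot W\!A^p_{m-1,1}}\lesssim\doublebar{\arr f}_{\dot W\!A^p_{m-1,0}}$, so \eqref{eqn:D:N:rough} drops out of Lemma~\ref{lem:S:N} and the already-proven \eqref{eqn:D:N}, and \eqref{eqn:D:lusin:rough:+} drops out of Lemma~\ref{lem:S:lusin} and \eqref{eqn:D:lusin:+}, with no new localization argument needed.

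The genuine gap is in your proposed proof of \eqref{eqn:D:N:rough}. You want to run the Lemma~\ref{lem:D:N:high} argument verbatim with $\arr\varphi$ replaced by $\arr f\in L^q(\R^n)$, and you flag the issue as ``an extra factor of $\ell(2^jQ)^{-1}$ in the $L^q$-norm of $\nabla^m\Psi_j$'' to be absorbed by bookkeeping. But the problem is structural, not a matter of bookkeeping: there is no extension $\Psi_j\in\dot W^q_m(\R^\dmn)$ with $\Tr_{m-1}\Psi_j=\arr f_j$ and $\doublebar{\nabla^m\Psi_j}_{L^q(\R^\dmn)}$ controlled by $\doublebar{\arr f_j}_{L^q(\R^n)}$, because the trace space of $\dot W^q_m(\R^\dmn)$ on $\R^n$ is the Besov space $\dot B^{q,q}_{m-1/q}(\R^n)$, whereas $\arr f_j$ is merely in $L^q$. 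Concretely, the mollifier construction $H_j(x,t)=\sum_{k=0}^{m-1}\frac{t^k}{k!}f_{j,k}*\theta_t(x)$ gives $\doublebar{\nabla^m H_j(\,\cdot\,,t)}_{L^q(\R^n)}\sim t^{-1}\doublebar{\arr f_j}_{L^q(\R^n)}$ (one more derivative than the data provides must fall on $\theta_t$), and $\int_0^a t^{-q}\,dt$ diverges for $q\geq 1$, so $\nabla^m\Psi_j\notin L^q(\R^\dmn)$ at all. The argument therefore cannot get off the ground at the $L^q$-boundedness of $\nabla^m\Pi^L$ step. This is exactly the issue the paper's identity \eqref{eqn:D:D:S} is designed to sidestep: differentiating $\D^{\mat A}(\Tr_{m-1}\Phi)$ in $t$ effectively trades one order of the trace for one order of $\partial_t$, so that all the ingredients ($\s^L_\nabla$ and $\partial_t\D^{\mat A}$) act on data at the correct Sobolev level. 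You would need either this identity or some comparable mechanism to replace the failed extension argument.
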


\begin{proof}
We will use Lemma~\ref{lem:lusin:+} to establish the bound \eqref{eqn:D:lusin:+}.
Let $u=\partial_\dmn\D^{\mat A}\arr \varphi$ and $u_Q=\partial_\dmn\D^{\mat A}\arr \varphi_{3Q}$, where $\arr \varphi_R$ is as in Definition~\ref{dfn:D:local}, and let $\psi=C\abs{\arr \varphi}$. By the bounds~\eqref{eqn:D:N} and~\eqref{eqn:D:lusin:2}, the conditions of Lemma~\ref{lem:lusin:+} are satisfied, and so the bound \eqref{eqn:D:lusin:+} is valid.

By formula~\eqref{eqn:D:D:S}, if $F$, $\Phi\in C^\infty_0(\R^n)$ and $\Tr_{m-1} F=\Tr_{m-1}\partial_\dmn\Phi$, then
\begin{equation*}\D^{\mat A} (\Tr_{m-1} F)= \s^L_\nabla(\mat A\Tr_m \Phi)+\partial_t\D^{\mat A}(\Tr_{m-1}\Phi).\end{equation*}
As in the proof of \cite[formula~(6.3)]{BarHM17pA}, given $\Tr_{m-1}F$ we may find an appropriate $\Phi$ such that 
\begin{equation*}\doublebar{\mat A\Tr_m \Phi}_{L^p(\R^n)}+ \doublebar{\Tr_{m-1}\Phi}_{\dot W\!A^p_{m-1,1}(\R^n)}
\leq 
\doublebar{\Tr_{m-1}F}_{\dot W\!A^p_{m-1,0}(\R^n)}.\end{equation*}
Thus, the bound \eqref{eqn:D:N:rough} follows from Lemma \ref{lem:S:N} and the bound~\eqref{eqn:D:N}, and the bound \eqref{eqn:D:lusin:rough:+} follows from Lemma~\ref{lem:S:lusin} and the bound~\eqref{eqn:D:lusin:+}.
\end{proof}

\bibliographystyle{amsalpha}
\bibliography{bibli}
\end{document}